\documentclass[a4paper,12pt,oneside, reqno]{amsart}

\usepackage{amsaddr}
\usepackage[bb=boondox]{mathalfa}
\usepackage{cite}
\usepackage{hyperref}
\usepackage{lmodern} 
\usepackage{amssymb} 

\numberwithin{equation}{section}

\allowdisplaybreaks[4]

\newcommand{\dint}{\displaystyle\int}

\newcommand{\one}{\mathbb{1}}
\newcommand{\zero}{\mathbb{0}}

\newcommand{\cH}{\mathcal{H}}

\newcommand{\cL}{\mathcal{L}}

\newcommand{\RR}{\mathbb{R}}
\newcommand{\CC}{\mathbb{C}}
\newcommand{\DD}{\mathbb{D}}
\newcommand{\NN}{\mathbb{N}}
\newcommand{\cB}{\mathcal{B}}
\newcommand{\cD}{\mathcal{D}}

\newcommand{\cP}{\mathcal{P}}

\newcommand{\eps}{\varepsilon}
\newcommand{\rmi}{\mathrm{i}}
\newcommand{\cc}{\mathsf{c}}

\newcommand{\conv}{\star}

\newcommand{\half}{\frac{1}{2}}
\newcommand{\nhalf}{\frac{N}{2}}

\DeclareMathOperator{\supp}{\mathrm{supp}}
\DeclareMathOperator{\dist}{\mathrm{dist}}
\DeclareMathOperator{\ran}{\mathrm{ran}}
\DeclareMathOperator{\essran}{\mathrm{ess\,ran}}
\DeclareMathOperator{\dom}{\mathrm{dom}}
\DeclareMathOperator{\spec}{\mathrm{spec}}
\DeclareMathOperator{\specp}{\mathrm{spec}_\mathrm{p}}
\DeclareMathOperator{\tr}{\mathrm{tr}}
\DeclareMathOperator{\pv}{\mathrm{p.v.}}

\newtheorem{theorem}{Theorem}[section]
\newtheorem{corollary}[theorem]{Corollary}
\newtheorem{lemma}[theorem]{Lemma}
\theoremstyle{definition}
\newtheorem{definition}[theorem]{Definition}

\newtheorem{remark}[theorem]{Remark}
\newtheorem{proposition}[theorem]{Proposition}

\renewcommand{\Tilde}{\widetilde}
\renewcommand{\Hat}{\widehat}

\newcommand{\dd}{\mathrm{d}}

\begin{document}
	\title[]{\large MIT bag model and infinite mass limit\\[\smallskipamount] in~non-smooth domains}
	
	\author[B. Benhellal]{Badreddine Benhellal}
	\address{\footnotesize  National Higher School of Mathematics, Scientific and Technology Hub of Sidi Abdellah, P.O.~Box~75,  16093~Algiers, Algeria}
	\email{badreddine.benhellal@nhsm.edu.dz}

	\author[N. K\"orner]{Noah K\"orner}
	\address{\footnotesize  Carl von Ossietzky Universit\"at Oldenburg,
		Fakult\"at V -- Mathematik und Naturwissenschaften,  Institut f\"ur Mathematik, Ammerl\"ander Heerstr. 114--118, 26129~Oldenburg (Oldb), Germany}
	\email{noah.koerner@uol.de}

	\author[D. Machado]{Dylan Machado}
	\address{\footnotesize  POEMS, CNRS, Inria, ENSTA, Institut Polytechnique de Paris,  828~Bd.~des~Mar\'echaux, 91120~Palaiseau, France} \email{dylan.machado@ensta.fr}
	
	\author[K. Pankrashkin]{Konstantin Pankrashkin}
	\address{\footnotesize Carl von Ossietzky Universit\"at Oldenburg,
		Fakult\"at V -- Mathematik und Naturwissenschaften,  Institut f\"ur Mathematik, Ammerl\"ander Heerstr. 114--118, 26129~Oldenburg (Oldb), Germany}
	\email{konstantin.pankrashkin@uol.de}

	\date{}

	\begin{abstract}
		The work is devoted to the study of Dirac operators with MIT bag boundary conditions in Euclidean domains with compact Lipschitz boundaries in arbitrary dimensions. It is shown that such operators are self-adjoint on suitable definition domains and can be recovered as the norm-resolvent limits of Dirac operators
		in the whole space with a large mass term outside the domain, under the assumption that an associated Robin-Laplacian eigenvalue has a prescribed asymptotic behavior with respect to a parameter in the boundary condition. This assumption is shown to hold for a class of non-smooth domains, which includes convex domains and, more generally, domains that can be ``locally convexified'' by suitable diffeomorphisms. To the best of our knowledge, this represents the first infinite mass interpretation for the MIT bag model in the sense of resolvent convergence
		for non-smooth domains. Most results are extended  to the generalized MIT bag boundary conditions
		with the help of the recently established congruence transform.
	\end{abstract}
	
	\maketitle

	\tableofcontents

	\section{Introduction}

	Let $n\geq 2$. For a choice of $N\in\NN$, let $\alpha_1,\dots,\alpha_n,\beta$ be pairwise anti-commuting Hermitian matrices
	with $\alpha_j^2=\beta^2=I_N$, where $I_N$ is the $N\times N$ identity matrix. We recall that due to the general theory of Clifford algebras the above choice is possible if and only if $N$ is a multiple of $2^{\lfloor\frac{n+1}{2}\rfloor}$, see e.g. \cite[Ch.~16]{lounesto},  and we refer to \cite[Ch.~15]{dg} or \cite[App.~E]{wit} for possible iterative constructions of $\alpha_j$ and $\beta$.
	
	For a parameter $m\in\RR$, the associated $n$-dimensional Dirac operator with mass $m$ is the first-order matrix differential operator acting on vector functions $f:\RR^n\to\CC^N$ by
	\[
	D_m:\  f\mapsto -\rmi \sum_{j=1}^n \alpha_j \partial_j f + m\beta f.
	\]
	In the present work, we are interested in operators given by the differential expression $D_m$ on Euclidean domains with special boundary conditions.
	
	Namely, for a Lipschitz domain $\Omega\subset\RR^n$ with compact boundary and outer unit normal $\nu$ denote by $A^\Omega_m$ the operator acting as $A^\Omega_m:\ f\mapsto D_m f$
	on the vector functions $f$ defined on $\Omega$ and satisfying the so-called \emph{MIT bag boundary condition}
	\begin{equation}
		\label{fbf1}
		f= \cB f \text{ on }\partial\Omega,\qquad  \cB:=-\rmi \beta (\alpha\cdot\nu),
	\end{equation}
	where we employ the usual notation (Clifford multiplication)
	\[
	\alpha\cdot x:=\sum_{j=1}^n x_j \alpha_j \text{ for }x=(x_1,\dots,x_n)\in\RR^n.
	\]
	The values of $f$ at the boundary in \eqref{fbf1} are understood in the sense of Sobolev traces, and the precise regularity of the functions $f$ in the operator domain is one of the main points discussed below. The operator $A^\Omega_m$ is often referred to as the \emph{MIT bag model} in $\Omega$, which was introduced in the physics literature as a model of quark confinement in hadrons~\cite{bogol,chod,degrand} and later found numerous other applications~\cite{baym,john}. It was observed in~\cite{BM} that the MIT bag model appears as an effective operator when considering the whole space with a large mass term outside the domain, which justifies the fact that the MIT bag boundary condition is often called \emph{infinite mass boundary condition}.
	
	The mathematically rigorous study of the MIT bag model seems more recent and has become particularly active during the last decade. One of the first questions to address is the rigorous definition of the above operator leading to a self-adjoint operator in the Hilbert space $L^2(\Omega,\CC^N)$. If $\Omega$ is a bounded $C^\infty$-smooth domain, the situation is covered by the general theory of elliptic differential operators, in which the MIT bag condition also appears under the name \emph{CHI boundary condition}, see \cite{baer,baer2,grosse}. It follows that the operator $A^\Omega_m$ becomes self-adjoint if considered on the ``natural'' domain
	\[
	\big\{ f\in H^1(\Omega,\CC^N):\ f=\cB f\text{ on } \partial\Omega\big\},
	\]
	which will be referred to as \emph{$H^1$-domain} later on. The result extends to bounded $C^2$ domains \cite{ALTR,BHM,SADirac2D,OBV,rab} and to a class of smooth domains with non-compact boundaries~\cite{rab2}.
	However, the situation changes if $\Omega$
	is non-smooth. The papers \cite{LTOB,PV} demonstrated that in the case of a planar piecewise smooth $\Omega$
	with concave corners the operator $A^\Omega_m$ is \emph{not} self-adjoint but just symmetric. However, it becomes self-adjoint if considered on the larger domain
	\begin{equation}
		\label{hhalf}
		\big\{ f\in H^\half(\Omega,\CC^N):\ D_m f\in L^2(\Omega,\CC^N),\  f=\cB f\text{ on } \partial\Omega\big\},
	\end{equation}
	which we call \emph{$H^\half$-domain} in what follows. The subsequent  works \cite{bb-jmp,BenhellalPankrashkin,BoundaryTripleWeylfunctions}
	showed, for $n\in\{2,3\}$, that the $H^\half$-domain is a self-adjointness domain of $A^\Omega_m$ for general Lipschitz $\Omega$ with compact boundaries, and some planar domains with non-compact boundaries and corners were covered in~\cite{CP25}. The recent paper \cite{pankrashkin2025mitbagnonsmoothconvex} demonstrated that, however, the $H^1$-domain is sufficient to guarantee the self-adjointness for bounded \emph{convex} $\Omega$ in all dimensions.
	
	In order to discuss the infinite mass interpretation, for $m,M\in\RR$ let us consider the self-adjoint operator $A^\Omega_{m,M}$ in $L^2(\RR^n,\CC^N)$ given by
	\[
	A^\Omega_{m,M}:\ f\mapsto D_0 f +(m\one_{\Omega}+M\one_{\RR^n\setminus\Omega})\beta f, \quad \dom A^\Omega_{m,M}=H^1(\RR^n,\CC^N),
	\]
	which is the free Dirac operator in the whole space with mass $m$ inside $\Omega$ and mass $M$ outside $\Omega$.
	As mentioned above, the paper \cite{BM} gave physical arguments supporting the idea that the confined operator $A^\Omega_m$ arises as a suitably defined limit of $A^\Omega_{m,M}$ as $M\to +\infty$. The first mathematically rigorous interpretation was given in \cite{SV} for $n=2$ in the following form: if $\Omega\subset \RR^2$ is a $C^2$-smooth bounded domain, then for each fixed $k\in\NN$ and $m\in\RR$ one has
	\begin{equation}
		\label{eigconv}
		E_k\big((A^\Omega_m)^2\big)=\lim_{M\to+\infty}E_k\big((A^\Omega_{m,M})^2\big),
	\end{equation}
	where $E_k(T)$ denotes the $k$-th eigenvalue of a lower semibounded operator $T$. The result was later extended to $n=3$ and bounded $C^2$-smooth domains in \cite{ALTMR} and to arbitrary $n$ and bounded $C^\infty$-smooth domains in \cite{mobp}.
	We also mention the work \cite{flam} proving similar results for Dirac operators on spin manifolds, as well as the recent extension of the convergence~\eqref{eigconv} for bounded piecewise smooth convex domains with bounded mean curvature in~\cite{pankrashkin2025mitbagnonsmoothconvex}. The paper \cite{barb} upgraded the eigenvalue convergence to the norm-resolvent convergence by showing that for any fixed $m\in\RR$ and $z\in\CC\setminus\RR$ one has
	\begin{equation}
		\label{resconv}
		r(M):=\big\|(A^\Omega_{m,M}-z)^{-1}-(A^\Omega_m-z)^{-1}\oplus\zero\big\|\xrightarrow{M\to+\infty}0,
	\end{equation}
	where the symbol $\oplus$ means the direct sum decomposition with respect to the identification $L^2(\RR^n,\CC^N)\simeq L^2(\Omega,\CC^N)\oplus L^2(\RR^n\setminus\Omega,\CC^N)$. In fact, the paper \cite{barb} considered $C^2$-smooth domains, with either compact
	or suitable controlled non-compact boundaries, and established the rate of convergence $r(M)=O(M^{-\half})$. The paper \cite{bbz} extended the result to the case of $C^\infty$-smooth bounded domains for $n=3$ with more detailed estimates
	of various terms with the help of microlocal analysis. The convergence \eqref{resconv} is often referred to as generalized norm-resolvent convergence, and it implies the convergence of eigenvalues, spectral projectors and other related quantities,
	see \cite[Satz~9.28]{weid} as well as the paper~\cite{post} for a more recent discussion. As noted in \cite{pankrashkin2025mitbagnonsmoothconvex}, the existing proofs of the norm-resolvent convergence explicitly use the boundary smoothness because they involve identities and inequalities containing the mean curvature, and we are not aware of any earlier work dealing with the infinite mass interpretation of the MIT bag model in the context of resolvent convergence 	for non-smooth domains.

	We further mention numerous recent works devoted to estimating the spectrum of $A^\Omega_m$ in terms of the geometric properties of $\Omega$, see e.g. \cite{beng-dots, bor, hlv, hobp, LOB, raulot, ALTR1}, and to spectral isoperimetric problems~\cite{ABLOB,ANT,bfhsl,bk,duran1,duranmas}.

	In view of the above overview, let us now describe the contributions of the present work and the structure of the paper. Let $\Omega\subset\RR^n$ be an open set with compact Lipschitz boundary and denote
	\[
	\nu:=\text{ the outer unit normal},\quad
	\Omega^\cc:=\RR^n\setminus\overline{\Omega}.
	\]
	In Section~\ref{sec3}, we
	show that the MIT bag operator $A^\Omega_m$ is self-adjoint on the $H^\half$-domain \eqref{hhalf}. The overall approach
	follows the same lines as for $n\in\{2,3\}$ in \cite{bb-jmp,BenhellalPankrashkin,BoundaryTripleWeylfunctions}, i.e.
	one shows first the self-adjointness of a special singular perturbation (Lorentz $\delta$-type) of the free Dirac operator supported by $\partial\Omega$ (Theorem~\ref{thm-sa}), and by looking at specific coupling constants
	one recovers the self-adjointness of the direct sum of the MIT bag operators in $\Omega$ and its complement (Corollary~\ref{lem10}). At the same time, our proof presents some new technical ingredients: while the preceding works for $n\in\{2,3\}$ used an explicit form of the resolvent integral kernel, we mostly proceed with arguments based on the theory of distributions, which significantly simplifies some steps and avoids the use of special functions and asymptotic expansions (see Section~\ref{sec2}).
	
	Section~\ref{sec4} is dedicated to establishing the resolvent convergence, for which an additional
	assumption is imposed: we require the existence of a function $\rho:[0,+\infty)\to[0,+\infty)$ with $\rho(M)=o(M^2)$
	for $M\to+\infty$ such that for all $M>0$ one has
	\begin{equation}
		\Lambda(\Omega^\cc,M):=\inf_{f\in H^1(\Omega^\cc),\,f\ne 0}\dfrac{\dint_{\Omega^\cc} |\nabla f|^2\dd x-M\dint_{\partial \Omega} |f|^2\dd S}{\dint_{\Omega^\cc} |f|^2\dd x}\ge -M^2-\rho(M).
		\label{robinineq}
	\end{equation}
	This assumption allows us to obtain the resolvent convergence \eqref{resconv} with the remainder
	\[
	r(M):=O\Big(\dfrac{\sqrt{\rho(M)}+\sqrt{M}}{M}\Big),
	\]
	see Theorem~\ref{thm19}. The proof strategy is mainly based on a representation of the resolvent using Poincar\'e-Steklov operators on $\partial\Omega$ similar to the observations made in Section~4.6 of the thesis \cite{bbthesis} for bounded $C^2$-domains, which are in turn a restructured version of~the constructions in~\cite{barb}. Our main contribution consists in showing that all objects can be properly defined and suitably estimated for a class of non-smooth domains as well.

	It should be noted that the left-hand side of \eqref{robinineq} is the bottom of the spectrum of the Robin Laplacian $f\mapsto-\Delta f$ in $\Omega^\cc$ with the Robin boundary condition $\partial_\nu f+Mf=0$, see \cite{bfk,lp}, and it is known that, in general, the inequality \eqref{robinineq} \emph{fails}: for any $c>0$ one can find $\Omega$ with $\Lambda(\Omega^\cc,M)\le -cM^2$ for large $M$. We refer to~\cite{bfk,kobp} for a detailed review on parameter-dependent Robin Laplacians. As a result,
	our arguments \emph{do not allow} to cover all Lipschitz $\Omega$. However, in Section~\ref{sec5} we find several classes of $\Omega$ for which \eqref{robinineq} holds.  If $\Omega$ is \emph{locally $C^1$-convexifiable} (i.e. coincides, up to a $C^1$-diffeomorphism, with a convex domain near each boundary point, see Definition~\ref{deff}), which includes $C^1$-domains and images of convex domains under diffeomorphisms of the whole space, then the resolvent convergence~\eqref{resconv} holds with the weak estimate $r(M)=o(1)$, see Theorem~\ref{thmgen}. In the particular case of \emph{locally convex-congurent} $\Omega$ (which includes e.g. finite unions of disjoint convex  and $C^{1,1}$-domains: see Definition~\ref{deflocconv})
	 we recover \eqref{resconv} with the error estimate $r(M)=O(M^{-\half})$, see Theorem~\ref{thmlocconvex}. To our best knowledge, these are the first results establishing the infinite mass limit in the norm-resolvent sense for non-smooth $\Omega$, and the associated estimates of $\Lambda(\Omega^\cc,M)$ in the convex and $C^1$-convexifiable cases (stated in Lemma~\ref{lem22convex} and~\ref{lemmpseudo} respectively) are new in the context of Robin eigenvalue problems as well. 
	
	In the last section~\ref{sec6} we use the results of the recent work~\cite{duran} to extend the
	self-adjointness and infinite mass limit results to the
	case of generalized MIT bag boundary conditions, see Theorem~\ref{thm62} and \ref{thm63}.
	Note that this represents the first infinite mass interpretation of generalized MIT bag models
	in non-smooth domains. The paper is concluded by a discussion of further possible extensions of the results and the techniques employed.

	Let us now briefly explain the notation. As already mentioned, everywhere below $\Omega$ stands for an open subset of $\RR^n$
	with compact Lipschitz boundary (while $\Omega$ itself can be unbounded) and $\nu$ is the outer unit normal on the boundary. As many constructions deal
	simultaneously with $\Omega$ and its complement, we denote
	\[
	\Sigma:=\partial\Omega,\quad \Omega_+:=\Omega,\quad \Omega_-:=\RR^n\setminus\overline{\Omega_+},
	\]
	and we frequently use the identification $L^2(\RR^n)\simeq L^2(\Omega_+)\oplus L^2(\Omega_-)$
	by $f\simeq(f_+,f_-)$ with $f_\pm$ being the restriction of $f$ to $\Omega_\pm$.
	We will consider the MIT bag operators associated with both $\Omega_+$ and $\Omega_-$,
	\[
	A^\pm_m:=A^{\Omega_\pm}_m \text{ in }L^2(\Omega_\pm,\CC^N),
	\]
	which are rigorously defined in \eqref{eqmit00} below. 	The matrix function $\cB$ on $\Sigma$ will always be defined by \eqref{fbf1}.

	\section{Free Dirac operator}\label{sec2}

	Let us start with some basic properties of the free Dirac operator $\DD_m$ with mass $m$ in $\RR^n$, which is a linear operator in $L^2(\RR^n,\CC^N)$ acting as
	\[
	\dom \DD_m :=H^1(\RR^n,\CC^N),
	\quad \DD_m f:=D_m f.
	\]
	We prefer to give a short self-contained proof of Theorem~\ref{thm1} below: while earlier proofs are available, see e.g. \cite[Prop.~A.1]{mobp}, we would like to simplify the available technically demanding arguments by using some algebraic constructions that are independent of the explicit choice of the matrices $\alpha_j$ and $\beta$.
	
	\begin{theorem}
		\label{thm1}
		The operator $\DD_m$ is self-adjoint and has purely essential spectrum,
		\[
		\spec \DD_m=\big(-\infty,-|m|\big]\,\cup\,\big[|m|,+\infty\big).
		\]
	\end{theorem}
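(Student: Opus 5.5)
We prove this with the Fourier transform $\mathcal F$, which reduces $\DD_m$ to multiplication by a Hermitian matrix-valued symbol. For $\xi\in\RR^n$ set
\[
h_m(\xi):=\alpha\cdot\xi+m\beta .
\]
Then $\mathcal F\DD_m\mathcal F^{-1}$ is the operator of multiplication by $h_m$. Its domain is $\{g:\ (1+|\xi|^2)^{\half}g\in L^2\}$, which is exactly $\mathcal F H^1(\RR^n,\CC^N)$.

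The key algebraic fact comes from the anticommutation relations $\alpha_j\alpha_k+\alpha_k\alpha_j=2\delta_{jk}I_N$ and $\alpha_j\beta+\beta\alpha_j=0$. They give
\[
h_m(\xi)^2=(|\xi|^2+m^2)I_N .
\]
Since $h_m(\xi)$ is Hermitian, its eigenvalues lie in $\{\pm\lambda(\xi)\}$ with $\lambda(\xi):=\sqrt{|\xi|^2+m^2}$. Both signs actually occur, because $\tr h_m(\xi)=0$. To see this, note that $\tr\alpha_j=\tr(\alpha_j\beta^2)=-\tr(\beta\alpha_j\beta)=-\tr\alpha_j$, and similarly $\tr\beta=0$. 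Consequently
\[
P_\pm(\xi):=\half\Big(I_N\pm\frac{h_m(\xi)}{\lambda(\xi)}\Big)
\]
are orthogonal projections, both nonzero, and $h_m=\lambda(P_+-P_-)$. The case $m=0$, $\xi=0$ is a null set and can be ignored.

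Self-adjointness is then standard. A multiplication operator by a measurable Hermitian matrix function, taken on its maximal domain, is self-adjoint. Alternatively, $\DD_m$ is symmetric on $H^1$, and $(\DD_m\pm\rmi)^{-1}$ is multiplication by $(h_m\pm\rmi)^{-1}=(h_m\mp\rmi)/(\lambda^2+1)$. This is bounded and maps $L^2$ into the weighted space $\mathcal F H^1$, since
\[
|(h_m\mp\rmi)/(\lambda^2+1)|\le(\lambda^2+1)^{-\half}.
\]
Hence $\ran(\DD_m\pm\rmi)=L^2$.

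For the spectrum, let $z\notin(-\infty,-|m|]\cup[|m|,\infty)$. Then $(h_m-z)^{-1}=P_+/(\lambda-z)+P_-/(-\lambda-z)$ is uniformly bounded in $\xi$, because $\dist(z,\pm\lambda(\xi))\ge\dist(z,\{|t|\ge|m|\})>0$. So $z$ lies in the resolvent set.

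Conversely, take $\mu$ with $|\mu|\ge|m|$ and choose $\xi_0$ with $\pm\lambda(\xi_0)=\mu$. Pick a unit vector $v$ in $\ran P_\pm(\xi_0)$. Build $g_\eps(\xi):=\eps^{-n/2}\chi((\xi-\xi_0)/\eps)P_\pm(\xi)v$ with a normalized bump $\chi$; these functions are continuous in $\xi$ and have norm close to $1$. Then $\|(h_m-\mu)g_\eps\|\to0$, and $g_\eps\rightharpoonup0$ weakly. This is a singular Weyl sequence, so $\mu$ belongs to the essential spectrum.

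Transporting back by $\mathcal F$ gives the claim. The only mildly delicate point is the Weyl sequence argument, specifically the continuity of $P_\pm$ near $\xi_0$. When $m=0$ and $\xi_0=0$ we have $\mu=0$, and $P_\pm$ is discontinuous there. We avoid this by taking $\xi_0\neq0$ near $0$, or simply by using that the spectrum is closed.
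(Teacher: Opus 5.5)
Your proof is correct. It shares the paper's core ingredients: the Fourier transform, $h_m(\xi)^2=(|\xi|^2+m^2)I_N$, and $\tr h_m(\xi)=0$ to see that both signs $\pm\lambda(\xi)$ occur. Both halves of the argument, however, are organized differently.

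For self-adjointness, the paper reduces to $m=0$ and computes the adjoint of the restriction $S$ of $\DD_0$ to $C^\infty_c(\RR^n,\CC^N)$. It then uses $|(\alpha\cdot\xi)\Hat f(\xi)|=|\xi|\,|\Hat f(\xi)|$ to show that the maximal operator has domain $H^1$, whence $\DD_0=S^*=\overline S$. This gives two by-products: $C^\infty_c$ is a core, and $f,\,D_m f\in L^2$ imply $f\in H^1$ (the latter is reused later, e.g.\ in Lemma~\ref{lem13spec}). Your range criterion with the explicit inverse $(h_m\mp\rmi)/(\lambda^2+1)$ is shorter and handles all $m$ at once. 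In your first variant you should state explicitly that $|h_m(\xi)g|=\lambda(\xi)|g|$. That identity is what identifies the maximal domain with $\mathcal F H^1$, and it yields the same regularity statement.

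For the spectrum, the paper invokes a measurable-selection theorem (Wilcox) to diagonalize $h_m(\xi)$ by measurable unitaries, then reads off the essential ranges of $\pm\lambda$. Your explicit projections $P_\pm=\half(I_N\pm h_m/\lambda)$ make that theorem unnecessary. Your Weyl sequences also prove membership in the essential spectrum directly. In the paper, the absence of discrete spectrum is instead immediate from the computed set having no isolated points.

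Finally, the delicate case $m=0$, $\mu=0$ disappears if you take the constant unit vector $v$ (an eigenvector of $h_m(\xi_0)$) instead of $P_\pm(\xi)v$. Then $(h_m(\xi)-\mu)v=(\alpha\cdot(\xi-\xi_0))v$ has norm exactly $|\xi-\xi_0|$, so no continuity of $P_\pm$ is needed.
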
	
	
	\begin{proof}
		(a) Self-adjointness. As bounded symmetric perturbations do not influence the self-adjointness, assume $m=0$.
		Denote by $S$ the restriction of $\DD_0$ to $\dom S:=C^\infty_c(\RR^n,\CC^N)$, then using the usual truncations and mollifications one shows that $\DD_0\subset \overline{S}$. The theory of distributions implies that the adjoint $S^*$ of $S$ is given by 
		\[
		\dom S^*:=\big\{f\in L^2(\RR^n,\CC^N):\ D_0 f\in L^2(\RR^n,\CC^N)\big\},\quad S^*f=D_0 f.
		\]
		One has the obvious inclusion $\DD_0\subset S^*$. Let us show the reverse inclusion.
		
		Let $f\in L^2(\RR^n,\CC^N)$, then $f$ is a tempered distribution on $\RR^n$. If $f\in \dom S^*$, then the condition $D_0 f\in L^2(\RR^n,\CC^N)$ shows that the function
		$\varphi: \ \xi\mapsto (\alpha\cdot\xi)\Hat f(\xi)$
		belongs to $L^2(\RR^n,\CC^N)$. We have
		\begin{align*}
			\|\varphi\|^2_{L^2(\RR^n,\CC^N)}&=\int_{\RR^n}\big\langle (\alpha\cdot\xi)\Hat f(\xi), (\alpha\cdot\xi)\Hat
			f(\xi)\big\rangle_{\CC^N}\dd\xi\\
			&= \int_{\RR^n}\big\langle \Hat f(\xi), \underbrace{(\alpha\cdot\xi)(\alpha\cdot\xi)}_{=|\xi|^2 I_N}\Hat f(\xi)\big\rangle_{\CC^N}\dd\xi=\int_{\RR^n} |\xi|^2 \big|\Hat f(\xi)\big|^2_{\CC^N}\dd\xi.
		\end{align*}
		Therefore, the function $\xi\mapsto |\xi| \Hat f(\xi)$ belongs to $L^2(\RR^n,\CC^N)$, which in turn implies
		$f\in H^1(\RR^n,\CC^N)$, i.e. $f\in \dom\DD_0$. Hence, $S^*\subset \DD_0$, and then $\DD_0=S^*$.
		
		As the adjoint operator is always closed, from $S\subset \DD_0$ it follows that $\overline S\subset \DD_0$, and then $\overline{S}=\DD_0$. Finally, we obtain $\DD_0^*=(S^*)^*=\overline{S}=\DD_0$, i.e. $\DD_0$
		is self-adjoint.
		
		(b) Spectrum. Let $m\in\RR$ be arbitrary. Consider the matrices
		\[
		h_m(\xi):=\alpha\cdot \xi+m\beta,\quad \xi\in\RR^n.
		\]
		Let $F:L^2(\RR^n,\CC^N)\to L^2(\RR^n,\CC^N)$ be the Fourier transform (normalized to a unitary operator), then
		$F\DD_mF^{-1}=F(S^*+m\beta)F^{-1}=:H_m$,
		where $H_m$ is the self-adjoint operator in $L^2(\RR^n,\CC^N)$ acting as $H_m g(\xi):=h_m(\xi)g(\xi)$ on the domain
		\[
		\dom H_m:=\big\{ g\in L^2(\RR^n,\CC^N):\ h_m g\in L^2(\RR^n,\CC^N)\big\}.
		\]
		
		For any $\xi$ we have $h_m(\xi)^2=|\xi|^2+m^2$, hence,
		\[
		\spec h_m(\xi)\subset\big\{\,\lambda_m^-(\xi),\,\lambda_m^+(\xi)\,\big\},\qquad
		\lambda^\pm_m(\xi):=\pm \sqrt{|\xi|^2+m^2}.
		\]
		
		Using the anticommutation of $\alpha_j$ and $\beta$ and the relation $\tr (BA)=\tr(AB)$ we obtain $\alpha_j=-\beta\alpha_j\beta$, hence,
		$\tr \alpha_j=-\tr(\beta\alpha_j\beta)=-\tr(\alpha_j \beta\beta)=-\tr\alpha_j$,
		which implies $\tr \alpha_j=0$. Similarly, $\beta=-\alpha_j\beta\alpha_j$ and
		$\tr \beta=-\tr(\alpha_j \beta\alpha_j)=-\tr(\beta\alpha_j\alpha_j)=-\tr\beta$,
		therefore, $\tr \beta=0$. This yields $\tr h_m(\xi)=0$ for all $\xi$. As the trace of a Hermitian matrix is exactly
		the sum of its eigenvalues (counting the multiplicities), it follows that
		for $(\xi,m)\ne(0,0)$ each of the above numbers $\lambda_m^\pm(\xi)$ is an eigenvalue of the matrix $h_m(\xi)$ of multiplicity $\nhalf$. By standard results on parameter-dependent Hermitian matrices, see e.g. \cite[Thm.~2]{wilcox}, 
		there exist unitary $N\times N$ matrices $U_m(\xi)$ depending measurably on $\xi$ such that
		for a.e. $\xi$ one has
		\[
		U_m(\xi)^{-1}h_m(\xi)U_m(\xi)=\begin{pmatrix}
			\lambda_m^+(\xi)I_\nhalf & \zero_\nhalf\\
			\zero_\nhalf & \lambda_m^-(\xi)I_{\nhalf}
		\end{pmatrix}.
		\]
		Consider the unitary operator
		\[
		U_m: L^2(\RR^n,\CC^N)\to L^2(\RR^n,\CC^N),\quad U_mg(\xi):=U_m(\xi)g(\xi),
		\]
		then
		the operator $\Lambda_m:=U^{-1}_mH_m U_m$ decomposes as $\Lambda_m=\Lambda^+_m\oplus \Lambda^-_m$, where
		$\Lambda_m^\pm$ is the operator of pointwise multiplication by the scalar functions  $\lambda^\pm_m$ in $L^2(\RR^n,\CC^\nhalf)$, hence,
		\[
		\spec \Lambda_m^\pm=\essran \lambda_m^\pm=\pm\big[|m|,+\infty\big).
		\]
		Note that by construction the operator $\Lambda_m$ is unitarily equivalent to $\DD_m$, and it follows that
		$\spec\DD_m=\spec \Lambda_m=\spec \Lambda^+_m\cup\spec \Lambda^-_m$.
	\end{proof}

	Our next objective is to describe some properties of the integral kernel of the resolvent of $\DD_m$.

	Let $E$ denote the standard fundamental solution of $-\Delta$ in $\RR^n$,
	\[
	E(x)=\begin{cases}
		\dfrac{1}{2\pi}\log\dfrac{1}{|x|}, & n=2,\\[\bigskipamount]
		\dfrac{1}{(n-2)\omega_n |x|^{n-2}}, & n\ge 3,
	\end{cases}
	\]
	where $\omega_n$ is the hypersurface volume of the unit sphere in $\RR^n$. We have $|\xi|^2\Hat E(\xi)=\widehat{-\Delta E}=\Hat{\delta_0}=1$, i.e. $\Hat E$ coincides with the function $\xi\mapsto |\xi|^{-2}$ in $\RR^n\setminus\{0\}$. Due to the identity $D_0^2=-\Delta\otimes I_N$ the function
	\[
	G:= D_0 (E\otimes I_N),\qquad \text{i.e.}\quad G(x)=\dfrac{\rmi \alpha\cdot x}{\omega_n |x|^n},\quad x\in\RR^n\setminus\{0\},
	\]
	is a fundamental solution of the Dirac operator $D_0$ on $\RR^n$, and by construction we have
	$\Hat G(\xi)=(\alpha\cdot \xi)\Hat E(\xi)$, i.e. $\Hat G$ coincides with the function $\xi\mapsto (\alpha\cdot\xi)|\xi|^{-2}$ in $\RR^n\setminus\{0\}$. As this function is locally integrable on $\RR^n$ for any $n\ge 2$, one has
	\[
	\Hat G(\xi)=\dfrac{\alpha\cdot\xi}{|\xi|^2} \text{ for a.e. }\xi\in\RR^n.
	\]
	
	For any $m\in\RR$ and $z\in\CC\setminus\spec \DD_m$ the function
	\[
	\xi\mapsto \dfrac{1}{|\xi|^2+m^2-z^2}
	\]
	is a tempered distribution on $\RR^n$. Hence, it is the Fourier transform of some tempered distribution $E_{m,z}$ on $\RR^n$,
	and due to 
	\[
	\widehat{\delta_0}=1=(|\xi|^2+m^2-z^2)\Hat E_{m,z}=\big((-\Delta+m^2-z^2) E_{m,z}\big){}^{\Hat{}}
	\]
	the distribution $E_{m,z}$ is a fundamental solution of $-\Delta+m^2-z^2$.
	Further note that due to $(D_m-z)(D_m+z)=(-\Delta+m^2-z^2)\otimes I_N$
	the tempered distribution
	\[
	G_{m,z}:=(D_m+z)\big(E_{m,z}\otimes I_N\big)
	\]
	is a fundamental solution of $D_{m}-z$ in $\RR^n$. By construction we have
	\[
	\Hat G_{m,z}(\xi)=\dfrac{\alpha\cdot\xi+m\beta+z I_N}{|\xi|^2+m^2-z^2} \quad\text{ for a.e. }\xi\in\RR^n.
	\]
	Note that $G_{m,z}$ can be expressed in terms of special functions, but for later applications
	it is more convenient to use the direct analysis based on the following assertion.
	
	\begin{lemma}\label{lem2}
		For any $m\in\RR$ and $z\in\CC\setminus\spec\DD_m$ one has 
		$G_{m,z}=G+ F_1+F_2$, where $F_1$ and $F_2$ are tempered distributions such that $F_1$ is $C^\infty$-smooth on $\RR^n$ and $\Hat F_2$ is bounded on $\RR^n$
		with $\Hat F_2(\xi)=O(|\xi|^{-2})$ for $\xi\to\infty$.	
	\end{lemma}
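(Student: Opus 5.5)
We work entirely on the Fourier side, splitting $\Hat G_{m,z}$ into a piece that agrees with $\Hat G$ at infinity, a compactly supported piece, and a remainder of order $|\xi|^{-2}$. Fix a cut-off $\chi\in C^\infty_c(\RR^n)$ with $0\le\chi\le 1$ and $\chi=1$ on a ball $B_R$. Here $R$ is chosen so large that $|\xi|^2+m^2-z^2\ne0$ is bounded away from zero for $|\xi|\ge R$. In fact, for $z\notin\spec\DD_m$ the denominator never vanishes for real $\xi$: if $z^2\notin[m^2,\infty)$ this is immediate, and $z^2\in[m^2,\infty)$ with $z$ real, $|z|\ge|m|$, is excluded, while $z$ imaginary gives $z^2<0$. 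Hence $\Hat G_{m,z}$ is a smooth bounded function on all of $\RR^n$. We then define
\[
\Hat F_1:=\chi\,\big(\Hat G_{m,z}-\Hat G\big),\qquad \Hat F_2:=(1-\chi)\big(\Hat G_{m,z}-\Hat G\big).
\]
Both are tempered distributions, and they sum to $\Hat G_{m,z}-\Hat G$.

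\emph{Step 1: $F_1$ is smooth.} The function $\Hat F_1$ is a compactly supported locally integrable function. Indeed, $\Hat G$ is $O(|\xi|^{-1})$ near $0$, which is integrable since $n\ge2$, and $\Hat G_{m,z}$ is bounded. Thus $\Hat F_1$ is a compactly supported distribution of order zero. By the Paley--Wiener--Schwartz theorem, or simply because $F_1(x)=(2\pi)^{-n/2}\int e^{\rmi x\cdot\xi}\Hat F_1(\xi)\,\dd\xi$ with an $L^1$ compactly supported integrand, we may differentiate under the integral arbitrarily often. Hence $F_1\in C^\infty(\RR^n)$ (even entire).

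\emph{Step 2: the decay of $\Hat F_2$.} For $|\xi|\ge R$ we compute algebraically
\[
\Hat G_{m,z}(\xi)-\Hat G(\xi)=\frac{m\beta+zI_N}{|\xi|^2+m^2-z^2}+(\alpha\cdot\xi)\Big(\frac{1}{|\xi|^2+m^2-z^2}-\frac1{|\xi|^2}\Big).
\]
The first term is $O(|\xi|^{-2})$. The bracket in the second term equals $\dfrac{z^2-m^2}{|\xi|^2(|\xi|^2+m^2-z^2)}=O(|\xi|^{-4})$, and multiplying by $|\alpha\cdot\xi|=|\xi|$ gives $O(|\xi|^{-3})$. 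Hence $\Hat F_2(\xi)=O(|\xi|^{-2})$ as $\xi\to\infty$.

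\emph{Step 3: boundedness of $\Hat F_2$.} The function $\Hat F_2$ vanishes on $B_R$, and on $\{|\xi|\ge R\}$ it is continuous and decays. It is therefore bounded on $\RR^n$.

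The only delicate point is checking that every object is a genuine tempered distribution and that the decomposition holds in $\mathcal S'$. This follows because $\Hat G$, $\Hat G_{m,z}$ and $\chi$ are locally integrable functions of polynomial growth, so the identity $\Hat G_{m,z}=\Hat G+\Hat F_1+\Hat F_2$ holds a.e. and hence in $\mathcal S'$. Applying the inverse Fourier transform yields $G_{m,z}=G+F_1+F_2$.
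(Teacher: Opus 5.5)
Your proposal is correct and follows essentially the same route as the paper. Both proofs split $\Hat G_{m,z}-\Hat G$ with a cut-off $\chi$ equal to $1$ near the origin, obtain smoothness of $F_1$ from the compact support of $\Hat F_1$, and read off $\Hat F_2=O(|\xi|^{-2})$ from the identity $\frac{1}{|\xi|^2+m^2-z^2}-\frac{1}{|\xi|^2}=\frac{z^2-m^2}{|\xi|^2(|\xi|^2+m^2-z^2)}$. (Taking $R$ large is unnecessary, since you show yourself that the denominator never vanishes on $\RR^n$.)
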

	
	\begin{proof}
		Denote $F:=G_{m,z}-G$. Due to the preceding discussion, its Fourier transform $\Hat F$ coincides with the smooth function
		\[
		\xi\mapsto \dfrac{\alpha\cdot\xi+m\beta+zI_N}{|\xi|^2+m^2-z^2}-\dfrac{\alpha\cdot\xi}{|\xi|^2}
		\]
		in $\RR^n\setminus\{0\}$. Let $\chi\in C^\infty_c(\RR^n)$ such that $\chi=1$ in a neighborhood of the origin,
		then $\Hat F = \chi \Hat F + (1-\chi)\Hat F$, and it follows
		that $F=F_1 +F_2$, where $F_1$ and $F_2$ are the tempered distributions  with
		$\widehat F_1=\chi\Hat F$ and $\widehat F_2=(1-\chi)\Hat F$.
		
		Note that $\widehat F_1$ is compactly supported, hence, $F_1$ is $C^\infty$-smooth. Further, 
		\begin{align*}
			\widehat F_2(\xi)&=\big(1-\chi(\xi)\big)\Big( \dfrac{\alpha\cdot\xi+m\beta+zI_N}{|\xi|^2+m^2-z^2}-\dfrac{\alpha\cdot\xi}{|\xi|^2}\Big)\\
			&=\big(1-\chi(\xi)\big)\bigg(\dfrac{(z^2-m^2) (\alpha\cdot\xi)}{|\xi|^2\big( |\xi|^2+m^2-z^2\big)}
			+\dfrac{m\beta+zI_N}{|\xi|^2+m^2-z^2}\bigg),
		\end{align*}
		which completes the proof.
	\end{proof}
	As a direct consequence of ellipticity,  for any $m\in\RR$ and $z\in\CC\setminus\spec \DD_m$
	the resolvent of $\DD_m$,
	\[
	(\DD_m-z)^{-1}: \  H^\sigma(\RR^n,\CC^N)\ni f\mapsto G_{m,z}\conv f \in H^{\sigma+1}(\RR^n,\CC^N),
	\]
	is a bounded linear operator for any $\sigma\in\RR$ (here and below the symbol $\conv$ stands for the convolution of distributions).

	\section{Self-adjointness of the MIT bag operator}\label{sec3}

	Together with the usual Sobolev spaces $H^s$, for an open set $U\subset\RR^n$ we will consider the Dirac-Sobolev spaces
	\[
	H^s_\alpha(U,\CC^N):=\big\{f\in H^s(U,\CC^N):\ D_0 f\in L^2(U,\CC^N)\big\},\quad s\in[0,1],
	\]
	which become Hilbert spaces if equipped with the scalar products
	\[
	\langle f,g \rangle_{H^s_\alpha(U,\CC^N)}:=\langle f,g \rangle_{H^s(U,\CC^N)}+\langle D_0f,D_0g \rangle_{L^2(U,\CC^N)}.
	\]

	Recall that the Dirichlet trace operator
	\[
	C^\infty_c(\RR^n,\CC^N)\ni f\mapsto f|_\Sigma\in L^2(\Sigma,\CC^N)
	\]
	uniquely extends to a bounded linear map
	\[
	\gamma:H^1(\RR^n,\CC^N)\to H^{\half}(\Sigma,\CC^N).
	\]
	In particular,
	\begin{equation}
		\label{gamkompakt}
		\gamma:H^1(\RR^n,\CC^N)\to L^2(\Sigma,\CC^N) \text{ is compact},
	\end{equation}
	and Fubini's theorem shows that its dual with respect to the $L^2(\Sigma,\CC^N)$-scalar product is
	the bounded map
	\[
	\gamma^*: L^2(\Sigma,\CC^N)\ni h\mapsto h \delta_\Sigma \in H^{-1}(\RR^n,\CC^N),
	\]
	where $\delta_\Sigma$ is the (matrix) Dirac $\delta$-distribution supported by $\Sigma$, which is defined by
	\[
	\delta_\Sigma: \quad C^\infty_c(\RR^n,\CC^N)\ni\varphi\mapsto \int_\Sigma \varphi \,\dd S\in \CC^N,
	\]
	with $\dd S$ being the hypersurface measure. We conclude that the operator
	\begin{align}\label{Def Phi}
		\Phi_{m,z}:=\big(\gamma(\DD_m-\Bar z)^{-1}\big)^*=(\DD_m-z)^{-1}\gamma^*:L^2(\Sigma,\CC^N)\to L^2(\RR^n,\CC^N)
	\end{align}
	is bounded as well. More explicitly, one has the convolution representation
	\[
	\Phi_{m,z} h=G_{m,z}\conv (h\delta_\Sigma),\quad h\in L^2(\Sigma,\CC^N).
	\]
	
	Note that by construction one has (in the sense of distributions of $\RR^n$)
	\begin{align*}
		(D_m-z)\Phi_{m,z}h&=(D_m-z)\big(G_{m,z}\conv (h\delta_\Sigma)\big)\\
		&=\big((D_m-z)G_{m,z}\big)\conv (h\delta_\Sigma)=\delta_0\conv(h\delta_\Sigma)=h\delta_\Sigma,
	\end{align*}
	in particular,
	\[
	(D_m-z)\Phi_{m,z}h=0 \text{ in }\RR^n\setminus\Sigma \text{ for any }h\in L^2(\Sigma,\CC^N),
	\]
	which shows $\Phi_{m,z}h\in H^0_\alpha(\RR^n\setminus\Sigma,\CC^N)$ for any $h\in L^2(\Sigma,\CC^N)$. Actually,
	due to a more refined argument based on mapping properties of layer potentials for the Laplacian, see \cite[Lemma 4.2]{BoundaryTripleWeylfunctions}, the map
	\[
	\Phi_{m,z}: L^2(\Sigma,\CC^N)\to H^\half_\alpha(\RR^n\setminus\Sigma,\CC^N)
	\]
	is well-defined and bounded. We further note that by \cite[Lemma 4.1]{BoundaryTripleWeylfunctions} the maps
	\[
	C^\infty(\overline{\Omega_\pm},\CC^N)\ni f\mapsto f|_\Sigma \in L^2(\Sigma,\CC^N)
	\]
	are uniquely extended to bounded linear operators (one-sided trace maps)
	\[
	\gamma_\pm: \ H^\half_\alpha(\Omega_\pm,\CC^N)\to L^2(\Sigma,\CC^N),
	\]
	and the usual partial integration rule is valid for all $f,g\in H^\half_\alpha(\Omega_\pm,\CC^N)$,
	\begin{equation}
		\label{partint}
		\langle D_m f,g\rangle_{L^2(\Omega_\pm,\CC^N)}= \big\langle \mp\rmi (\alpha\cdot\nu) \gamma_\pm f,\gamma_\pm g\big\rangle_{L^2(\Sigma,\CC^N)}+\langle f,D_m g\rangle_{L^2(\Omega_\pm,\CC^N)}.
	\end{equation}
	Remark that the paper \cite{BoundaryTripleWeylfunctions} formally considers the cases $n\in\{2,3\}$ only, but the proofs of the above results are independent of the dimension. We further note that the actions of $\gamma$ and $\gamma_\pm$ are extended in an obvious way to functions that are $H^1$-regular in a neighborhood of $\Sigma$ and  $H^\half_\alpha$-regular in one-sided neighborhoods of $\Sigma$ only. Moreover, for a function $f$ defined on $\RR^n\setminus\Sigma$ it will be convenient to denote
	\[
	\gamma_\pm f:=\gamma_\pm f_\pm,\quad f_\pm:=f|_{\Omega_\pm}.
	\]
	
	The preceding discussion shows that the maps
	\[
	\gamma_\pm\Phi_{m,z}: \ L^2(\Sigma,\CC^N)\to L^2(\Sigma,\CC^N)
	\]
	are well-defined and bounded, so let us study them more attentively.

	\begin{lemma}\label{lem3}
		For any $m\in\RR$ and $z\in\CC\setminus\spec\DD_m$ one has
		\[
		\gamma_\pm \Phi_{m,z} h= \mp \dfrac{\rmi}{2}(\alpha\cdot\nu)h+ C_{m,z}h\quad	\text{ for all }h\in L^2(\Sigma,\CC^N),
		\]
		where $C_{m,z}=C+K_{m,z}$ with $C$ being the bounded singular integral operator in $L^2(\Sigma,\CC^N)$ given by
		the Cauchy principal value
		\[
		C h(x)=\pv \int_\Sigma \dfrac{\rmi \alpha\cdot(x-y)}{\omega_n|x-y|^n}h(y)\,\dd S(y),\quad x\in\Sigma,
		\]
		and $K_{m,z}$ is a compact operator in $L^2(\Sigma,\CC^N)$.	
	\end{lemma}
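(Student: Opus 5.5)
We use the decomposition $G_{m,z}=G+F_1+F_2$ from Lemma~\ref{lem2}, which gives
\[
\Phi_{m,z}h=G\conv(h\delta_\Sigma)+F_1\conv(h\delta_\Sigma)+F_2\conv(h\delta_\Sigma),
\]
and treat the three terms separately. The jump term $\mp\frac{\rmi}{2}(\alpha\cdot\nu)h$ and the principal value operator $C$ should come from the massless kernel $G$ alone. The contributions of $F_1$ and $F_2$ should produce compact operators on $L^2(\Sigma,\CC^N)$, and we collect them into $K_{m,z}$.

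\emph{Step 1: the $F_1$ and $F_2$ terms.} For $F_1$, which is $C^\infty$-smooth, the function $F_1\conv(h\delta_\Sigma)$ is smooth on $\RR^n$, so its two one-sided traces coincide. As a map $L^2(\Sigma)\to L^2(\Sigma)$, it is an integral operator with kernel $F_1(x-y)$, continuous on the compact set $\Sigma\times\Sigma$. Hence it is Hilbert--Schmidt, and in particular compact.

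For $F_2$, the boundedness of $\Hat F_2$ together with $\Hat F_2(\xi)=O(|\xi|^{-2})$ shows that convolution with $F_2$ maps $H^{-1}(\RR^n)$ boundedly to $H^{1}(\RR^n)$. Since $h\delta_\Sigma=\gamma^*h\in H^{-1}$, the function $F_2\conv(h\delta_\Sigma)$ lies in $H^1(\RR^n,\CC^N)$. Consequently $\gamma_+$ and $\gamma_-$ both act on it as $\gamma$. The resulting map is $\gamma\circ(F_2\conv\cdot)\circ\gamma^*$, a composition of bounded maps with the compact map $\gamma:H^1\to L^2(\Sigma)$ from \eqref{gamkompakt}, so it is compact.

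These two pieces are also $H^\half_\alpha$-regular on each side, so the decomposition of the one-sided traces is legitimate. It remains to check that $G\conv(h\delta_\Sigma)=\Phi_{m,z}h-(\text{the rest})$ belongs to $H^\half_\alpha(\Omega_\pm)$; this follows from the mapping property of $\Phi_{m,z}$ together with the $H^1$ (hence $H^\half$) regularity of the other two terms.

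\emph{Step 2: the massless term.} Here we must show that for $h\in L^2(\Sigma)$,
\[
\gamma_\pm\big(G\conv(h\delta_\Sigma)\big)=\mp\tfrac{\rmi}{2}(\alpha\cdot\nu)h+Ch.
\]
This is a Plemelj--Sokhotski jump formula for the Clifford--Cauchy integral on a Lipschitz surface. We plan to reduce it to known harmonic analysis via the identity $G=D_0(E\otimes I_N)$, which gives
\[
G\conv(h\delta_\Sigma)=-\rmi\sum_j\alpha_j\,\partial_j\big(\mathcal S h\big),
\]
where $\mathcal S$ is the harmonic single layer potential. The Coifman--McIntosh--Meyer theorem gives the $L^2$-boundedness of the principal value operator $C$. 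Verchota's results give the nontangential limits of $\nabla\mathcal S h$:
\[
\nabla\mathcal S h\to \mp\tfrac12\nu h+\pv\int_\Sigma\nabla E(x-y)h(y)\,\dd S(y)
\]
almost everywhere, together with nontangential maximal function bounds. Contracting with $-\rmi\alpha_j$ turns this into exactly $\mp\frac{\rmi}{2}(\alpha\cdot\nu)h+Ch$.

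\emph{The main obstacle} is identifying these nontangential limits with the abstract one-sided traces $\gamma_\pm$ on $H^\half_\alpha$, which are defined by density. Our plan is to prove the formula first for smooth (say Lipschitz) $h$, where the layer potential is regular enough near $\Sigma$ for both notions of trace to agree. We then extend to all $h\in L^2(\Sigma)$ by density, using that both sides of the identity depend boundedly on $h$ in $L^2(\Sigma)$: the left side through the boundedness of $\gamma_\pm\Phi_{m,z}$, and the right side through the $L^2$-boundedness of $C$.
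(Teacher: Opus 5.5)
Your decomposition, your treatment of the $F_1$- and $F_2$-terms, and the reduction of $\varphi_0:=G\conv(h\delta_\Sigma)=-\rmi\sum_j\alpha_j\partial_j\mathcal{S}h$ to the jump relations for the gradient of the harmonic single layer all follow the paper's proof. There is a gap, though, in how you resolve the step you yourself flag: identifying the abstract traces $\gamma_\pm\varphi_0$ with the nontangential limits. Your base case asserts that for smooth (or Lipschitz) $h$ the potential is regular enough near $\Sigma$ for the two notions of trace to agree. On a merely Lipschitz $\Sigma$ this fails, because the obstruction comes from the roughness of $\Sigma$, not of $h$. Take a polygon and $h\equiv 1$. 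A direct computation gives $\nabla\mathcal{S}h(x)=\frac{1}{2\pi}\log\frac{1}{|x-x_c|}\,(e+e')+O(1)$ near a corner $x_c$, where $e,e'$ are the unit vectors along the two edges. Hence $\varphi_0$ is logarithmically unbounded near $x_c$ and not in $H^1$ there from either side. In general the regularity available for every $h\in L^2(\Sigma,\CC^N)$, namely $\varphi_0\in H^{\half}(\Omega_\pm)$, is not improved by taking $h$ smooth. So your density argument is fine in itself, since both sides depend boundedly on $h$, but it has no base case.

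What is needed is a result identifying the trace of a harmonic function with its a.e.\ nontangential boundary values. This is exactly what the paper invokes. The function $\varphi_0=\Phi_{m,z}h-\varphi_1-\varphi_2$ is $H^{\half}_\alpha$ on each side of $\Sigma$ and componentwise harmonic in $\RR^n\setminus\Sigma$. Hence \cite[Thm.~6.11(ii)]{bgm} gives that its nontangential limits exist a.e.\ and equal $\gamma_\pm\varphi_0$. This holds directly for every $h\in L^2(\Sigma,\CC^N)$, so no density step is needed.

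Alternatively, you can prove the identification yourself using the nontangential maximal bound $N(\varphi_0)\in L^2(\Sigma)$ that you already quote. Work in a local Lipschitz-graph chart with a cut-off $\chi$ and a fixed transversal direction $\tau$ pointing into $\Omega_\pm$. The functions $\chi\,\varphi_0(\cdot+t\tau)$, $t\downarrow 0$, lie in $C^\infty(\overline{\Omega_\pm})$ and converge to $\chi\varphi_0$ in $H^{\half}_\alpha(\Omega_\pm)$. Their restrictions to $\Sigma$ converge in $L^2(\Sigma)$ to $\chi$ times the nontangential limit, by dominated convergence with $N(\varphi_0)$. Continuity of $\gamma_\pm$ then yields the identification.

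Finally, you have a sign slip. With the paper's $E$ (fundamental solution of $-\Delta$) and $\nu$ exterior to $\Omega_+$, the limits from $\Omega_\pm$ are $\pm\frac12\nu h+\pv\int_\Sigma\nabla E(x-y)h(y)\,\dd S(y)$. Your $\mp$ belongs to the opposite convention $\Delta\Gamma=\delta_0$, used e.g.\ in \cite{hmt}. Contracting your formula with $-\rmi\alpha_j$ would give $\pm\frac{\rmi}{2}(\alpha\cdot\nu)h$, which is the wrong jump.
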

	
	\begin{proof}
		Let $h\in L^2(\Sigma,\CC^N)$. Using the representation of $G_{m,z}$ as in~Lemma~\ref{lem2}
		we obtain
		\[
		\Phi_{m,z}h=G\conv \gamma^*h + F_1\conv \gamma^*h+F_2\conv \gamma^*h=:\varphi_0+\varphi_1+\varphi_2.
		\]
		We compute $\gamma_\pm\varphi_j$ separately.
		
		Let us start with the pointwise representation
		\[
		\varphi_1(x)=(F_1\conv h\delta_\Sigma)(x)=\int_\Sigma F_1(x-y)h(y)\,\dd S(y),\qquad x\in \RR^n\setminus\Sigma.
		\]
		Due to the smoothness of $F_1$ we have $\varphi_1\in C^\infty(\RR^n,\CC^N)$, and it follows that $\gamma_\pm \varphi_1=\gamma\varphi_1=K_1h$,
		where $K_1$ is the integral operator given by
		\[
		(K_1 h)(x)=\int_\Sigma F_1(x-y)h(y)\,\dd S(y),\quad x\in\Sigma.
		\]
		The smoothness of $F_1$ implies that $K_1$ is a Hilbert-Schmidt operator in $L^2(\Sigma,\CC^N)$, in particular, compact.

		To examine $\gamma_\pm\varphi_2$ recall that due to the properties of $F_2$ stated in Lemma~\ref{lem2} the map
		\[
		H^s(\RR^n,\CC^N)\ni T\mapsto F_2\conv T \in H^{s+2}(\RR^n,\CC^N)
		\]
		is bounded for any $s\in\RR$. From $\gamma^*h\in H^{-1}(\RR^n,\CC^N)$ one obtains $\varphi_2\in H^1(\RR^n,\CC^N)$, and then $\gamma_\pm \varphi_2=\gamma \varphi_2=\gamma (F_2\conv \gamma^*h)$.
		By using \eqref{gamkompakt} we conclude that
		$\gamma_\pm\varphi_2=K_2 h$,
		where $K_2$ is a compact operator in $L^2(\Sigma,\CC^N)$.

		To compute $\gamma_\pm\varphi_0$ we note first the pointwise representation
		\[
		\varphi_0(x)=(G\conv h\delta_\Sigma)(x)=\int_\Sigma \dfrac{\rmi\alpha\cdot(x-y)}{\omega_n|x-y|^n} h(y)\,\dd S(y),\qquad x\in \RR^n\setminus\Sigma.
		\]
		From the above considerations it follows that $\varphi_0\equiv \Phi_{m,z}h-\varphi_1-\varphi_2$ is $H^\half_\alpha$-regular on each side of $\Sigma$.
		Furthermore, as $G$ is a fundamental solution of $D_0$, we have
		\[
		(\Delta\otimes I_N)\varphi_0=- D_0 (D_0 G\conv h\delta_\Sigma)=-D_0 (h\delta_\Sigma),
		\]
		in particular, $(\Delta\otimes I_N)\varphi_0=0$ in $\RR^n\setminus\Sigma$. It follows, see e.g. \cite[Thm.6.11(ii)]{bgm},
		that for a.e. $x^0\in\Sigma$ the non-tangential limits
		\[
		g_\pm(x^0):=\lim_{x\xrightarrow{\mathrm{nt}}x^0,\ x\in \Omega_\pm}\varphi_0(x)
		\]
		exist, and $\gamma_\pm \varphi_0=g_\pm$. In order to compute $g_\pm$ we remark that
		by \cite[Prop.~3.30]{hmt} for each $j\in\{1,\dots,n\}$ and almost all $x^0\in\Sigma$ one has
		\begin{align*}
			\lim_{x\xrightarrow{\mathrm{nt}}x^0,\ x\in \Omega_\pm}\int_\Sigma\, &\,\dfrac{x_j-y_j}{\omega_n|x-y|^n}h(y)\,\dd S(y)\\
			&=-\lim_{x\xrightarrow{\mathrm{nt}}x^0,\ x\in \Omega_\pm} \int_\Sigma (\partial_j E)(x-y) h(y)\,\dd S(y)\\
			&=\mp \half \nu_j(x^0)h(x^0) -\pv\int_\Sigma (\partial_j E)(x^0-y) h(y)\,\dd S(y)\\
			&=\mp \half \nu_j(x^0)h(x^0) +\pv\int_\Sigma\,\dfrac{x^0_j-y_j}{\omega_n|x^0-y|^n} h(y)\,\dd S(y);
		\end{align*}
		note that \cite{hmt} uses a different sign convention for $E$.
		By applying this identity to each component of $\varphi_0$ we obtain
		\[
		\gamma_\pm\varphi_0=g_\pm=\mp\dfrac{\rmi}{2}(\alpha\cdot\nu)h+ Ch.
		\]
		
		Therefore,
		\begin{align*}
			\gamma_\pm \Phi_{m,z} h&=\gamma_\pm\varphi_0+\gamma_\pm \varphi_1+\gamma_\pm \varphi_2
			=\mp\dfrac{\rmi}{2}(\alpha\cdot\nu)h+ Ch +(K_1+K_2)h,
		\end{align*}
		and we arrive at the required representation with $K_{m,z}:=K_1+K_2$. The boundedness of $C$ is a standard result in the theory of singular integral operators, see Remark~\ref{remlim} below.
	\end{proof}
	
	\begin{corollary}\label{corol4}
		For any $m\in\RR$, $z\in\CC\setminus\spec\DD_m$ and $h\in L^2(\Sigma,\CC^N)$ one has
		\begin{align*}
			\dfrac{\gamma_+ +\gamma_-}{2} \Phi_{m,z} h&= C_{m,z}h, &
			(\gamma_+ -\gamma_-) \Phi_{m,z} h&= -\rmi(\alpha\cdot \nu)h.
		\end{align*}
		In particular, the operator $\Phi_{m,z}$ is injective.
	\end{corollary}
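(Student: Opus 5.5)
This is a direct consequence of Lemma~\ref{lem3}, and the plan is to add and subtract the two one-sided trace formulas. Fix $h\in L^2(\Sigma,\CC^N)$. Lemma~\ref{lem3} gives
\[
\gamma_+\Phi_{m,z}h=-\tfrac{\rmi}{2}(\alpha\cdot\nu)h+C_{m,z}h,\qquad
\gamma_-\Phi_{m,z}h=\tfrac{\rmi}{2}(\alpha\cdot\nu)h+C_{m,z}h .
\]
Both identities hold in $L^2(\Sigma,\CC^N)$ with the same operator $C_{m,z}=C+K_{m,z}$ on both sides, since the singular part $C$ and the compact parts $K_1,K_2$ were shown to be independent of the side. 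Adding the two identities and dividing by $2$ makes the jump terms cancel, which gives the first formula. Subtracting the second identity from the first cancels $C_{m,z}h$ and leaves $-\rmi(\alpha\cdot\nu)h$, which is the second formula.

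For injectivity, suppose $\Phi_{m,z}h=0$ as an element of $L^2(\RR^n,\CC^N)$. Then its restrictions to $\Omega_\pm$ vanish as elements of $H^\half_\alpha(\Omega_\pm,\CC^N)$, so $\gamma_\pm\Phi_{m,z}h=0$. The jump formula then yields $(\alpha\cdot\nu)h=0$ a.e. on $\Sigma$.

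It remains to invert $\alpha\cdot\nu$. The matrices $\alpha_j$ are Hermitian and pairwise anticommuting with $\alpha_j^2=I_N$, so $(\alpha\cdot\nu)^2=|\nu|^2I_N=I_N$ a.e. on $\Sigma$, because $\nu$ is a unit vector wherever it is defined (a.e.\ on a Lipschitz boundary). Multiplying $(\alpha\cdot\nu)h=0$ by $\alpha\cdot\nu$ therefore gives $h=0$, so $\Phi_{m,z}$ is injective.

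There is no real obstacle here. The only point needing care is that the two traces in Lemma~\ref{lem3} are taken of the same function $\Phi_{m,z}h$ and carry the same remainder $C_{m,z}$, which is exactly what that lemma's proof established.
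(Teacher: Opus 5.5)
Your proposal is correct and follows the paper's own argument: add and subtract the two one-sided trace formulas of Lemma~\ref{lem3}, then read off injectivity from the jump identity. Your write-up also spells out the step $(\alpha\cdot\nu)^2=I_N$ a.e.\ on $\Sigma$, which the paper uses implicitly.
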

	
	\begin{proof}
		The identities follow directly by Lemma~\ref{lem3}. If $\Phi_{m,z}h=0$ for some $h$, then the second identity implies $h=0$, which shows the injectivity.
	\end{proof}

	\begin{remark}\label{remlim}
		If one considers the functions
		\[
		\rho_j:\quad \RR^n\setminus\{0\}\ni x\mapsto \dfrac{x_j}{|x|^n}\in\RR
		\]
		and, for $\eps>0$, the integral operators with truncated integral kernels,
		\[
		T_j^\eps: \ L^2(\Sigma)\to L^2(\Sigma),\quad T^\eps_j h(x)=\int_\Sigma \one_{|x-y|>\eps}\rho_j(x-y)h(y)\,\dd S(y),
		\]
		then the seminal results~\cite[Sec.~6]{gd} on singular integral operators show that the maximal operators $\Tilde T_j$,
		\[
		\Tilde T_j :L^2(\Sigma)\to L^2(\Sigma),\quad \Tilde T_j h(x):=\sup_{\eps>0}\big|T_j^\eps h(x)\big|,
		\]
		are well-defined and bounded (note that in general these operators are not linear). The results of~\cite{hmt} cited above in the proof of Lemma~\ref{lem3} show that the Cauchy principal value integrals
		\[
		T_j h(x):=\pv\int_\Sigma \rho_j(x-y)h(y)\,\dd s(y)\equiv \lim_{\eps\to 0} T_j^\eps h(x)
		\]
		exist for a.e. $x\in\Sigma$, and in view of the pointwise inequalities
		\[
		\big|T_j h(x)\big|=\big|\lim_{\eps\to 0} T_j^\eps h(x)\big|\le \sup_{\eps>0}\big|T_j^\eps h(x)\big|=\Tilde T_j h(x) 
		\]	
		the boundedness of $\Tilde T_j$ implies the boundedness of $T_j$, which in turn shows that the matrix operator $C$ is bounded.
		
		If $h\in L^2(\Sigma)$, then in view of the relations
		\[
		\lim_{\eps\to 0} \big|T_j^\eps h(x)-T_j h(x)\big|=0,\quad \big|T_j^\eps h(x)-T_j h(x)\big|\le 2\big|\Tilde T_j h(x)\big|,
		\]
		which are valid for a.~e. $x\in\Sigma$, one obtains with the help of dominated convergence theorem
		\[
		\lim_{\eps\to 0}\int_\Sigma \big|(T_j^\eps-T_j)h(x)\big|^2\dd s(x)=0,
		\]
		i.e. $T_j^\eps\xrightarrow{\eps\to 0}T_j$ strongly. By applying these observations to each entry of $C$ we obtain the following technical assertion:
	\end{remark}
	
	\begin{lemma}\label{lem5}
		For $\eps>0$ define $C^\eps:L^2(\Sigma,\CC^N)\to L^2(\Sigma,\CC^N)$ by
		\[
		C^\eps h(x)=\int_\Sigma \one_{|x-y|>\eps}\dfrac{\rmi \alpha\cdot(x-y)}{\omega_n|x-y|^n}h(y)\,\dd S(y),\quad x\in\Sigma,
		\]	
		then $C^\eps\xrightarrow{\eps\to 0}C$ strongly.
	\end{lemma}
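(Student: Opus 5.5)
The idea is to reduce the matrix statement to the scalar statement already established in Remark~\ref{remlim}, entry by entry. Write the kernel as
\[
\frac{\rmi\,\alpha\cdot(x-y)}{\omega_n|x-y|^n}=\frac{\rmi}{\omega_n}\sum_{j=1}^n \rho_j(x-y)\,\alpha_j .
\]
Then for $h=(h_1,\dots,h_N)\in L^2(\Sigma,\CC^N)$ the $k$-th component of $C^\eps h$ is
\[
(C^\eps h)_k=\frac{\rmi}{\omega_n}\sum_{j=1}^n\sum_{l=1}^N (\alpha_j)_{kl}\,T_j^\eps h_l .
\]
The same formula holds for $Ch$ with $T_j$ in place of $T_j^\eps$, since the principal value is the pointwise limit taken componentwise. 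Consequently, $C^\eps-C$ is a finite linear combination, with constant matrix coefficients, of the operators $(T_j^\eps-T_j)$ acting on single scalar components.

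The remaining step is to combine these with the scalar strong convergence. Remark~\ref{remlim} gives $\|(T_j^\eps-T_j)g\|_{L^2(\Sigma)}\to0$ for every scalar $g\in L^2(\Sigma)$. Applying this with $g=h_l$ and using the triangle inequality yields
\[
\|(C^\eps-C)h\|_{L^2(\Sigma,\CC^N)}\le \frac{1}{\omega_n}\sum_{k,l}\sum_j |(\alpha_j)_{kl}|\,\|(T_j^\eps-T_j)h_l\|_{L^2(\Sigma)}\xrightarrow{\eps\to0}0 .
\]

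Two further points need checking.

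First, $C^\eps$ must be well defined and bounded on $L^2(\Sigma,\CC^N)$ for each fixed $\eps>0$. The truncated kernel is bounded by $\eps^{1-n}/\omega_n$ and $\Sigma$ is compact, so each $C^\eps$ is even Hilbert--Schmidt. Alternatively, the bound $|T^\eps_j h|\le\Tilde T_j h$ gives a uniform bound in $\eps$.

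Second, the justification of the scalar claim itself is the only substantive step, and it is already contained in Remark~\ref{remlim}. It has three ingredients: a.e. existence of the principal value from \cite{hmt}, $L^2$-boundedness of the maximal operators $\Tilde T_j$ from \cite{gd}, and dominated convergence with the majorant $4|\Tilde T_jh|^2\in L^1(\Sigma)$.

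So the main potential obstacle is not in this lemma but in those cited harmonic-analysis results. Once they are accepted, the proof is purely the componentwise bookkeeping above.
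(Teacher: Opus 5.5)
Your proposal is correct and follows the paper's own route: Remark~\ref{remlim} establishes the scalar strong convergence $T_j^\eps\to T_j$ from the $L^2$-boundedness of the maximal operators $\Tilde T_j$, the a.e.\ existence of the principal values and dominated convergence, and then obtains the lemma by applying this to each entry of $C$, which is exactly your componentwise argument. Your side remark that each $C^\eps$ is Hilbert--Schmidt, since its kernel is bounded by $\eps^{1-n}/\omega_n$ on the compact $\Sigma$, is also consistent with how the paper treats $C^\eps$ in Corollary~\ref{corol7}.
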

	
	\begin{corollary}\label{corol7}
		The operator $\rmi \beta C$ is self-adjoint in $L^2(\Sigma,\CC^N)$.
	\end{corollary}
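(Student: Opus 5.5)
Since $C$ is bounded (Remark~\ref{remlim}) and $\beta$ is a constant unitary matrix, $\rmi\beta C$ is a bounded operator on $L^2(\Sigma,\CC^N)$. It therefore suffices to show that it is symmetric, i.e. $(\rmi\beta C)^*=\rmi\beta C$. I will prove this for the truncated operators $C^\eps$ of Lemma~\ref{lem5} and then pass to the limit $\eps\to0$.

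Fix $\eps>0$. The kernel of $C^\eps$ is
\[
k_\eps(x,y)=\one_{|x-y|>\eps}\dfrac{\rmi\,\alpha\cdot(x-y)}{\omega_n|x-y|^n}.
\]
It is bounded on $\Sigma\times\Sigma$, and $\Sigma$ is compact, so $C^\eps$ is a Hilbert--Schmidt integral operator. Its adjoint has kernel $k_\eps(y,x)^*$. Since the $\alpha_j$ are Hermitian, $(\rmi\alpha\cdot w)^*=-\rmi\alpha\cdot w$ for real $w$. Combined with the oddness of the kernel in $x-y$, this gives
\[
k_\eps(y,x)^*=-\,\dfrac{\rmi\,\alpha\cdot(y-x)}{\omega_n|x-y|^n}\one_{|x-y|>\eps}=k_\eps(x,y),
\]
so $C^\eps$ is self-adjoint.

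Next, $(\rmi\beta C^\eps)^*=-\rmi C^\eps\beta$. Since $\beta$ anticommutes with each $\alpha_j$, we have $\beta(\rmi\alpha\cdot w)=-(\rmi\alpha\cdot w)\beta$ pointwise in the kernel. Hence $C^\eps\beta=-\beta C^\eps$, which gives $(\rmi\beta C^\eps)^*=\rmi\beta C^\eps$. Thus $\rmi\beta C^\eps$ is self-adjoint for every $\eps>0$.

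To pass to the limit, take $f,g\in L^2(\Sigma,\CC^N)$ and write
\[
\langle \rmi\beta C f,g\rangle=\lim_{\eps\to0}\langle \rmi\beta C^\eps f,g\rangle=\lim_{\eps\to0}\langle f,\rmi\beta C^\eps g\rangle=\langle f,\rmi\beta Cg\rangle.
\]
The first and last equalities use the strong convergence $C^\eps\to C$ from Lemma~\ref{lem5}, applied to $f$ and to $g$ respectively. So $\rmi\beta C$ is bounded and symmetric, hence self-adjoint.

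The only delicate point is justifying the interchange with the principal value. This is exactly what Lemma~\ref{lem5} provides, so I expect no real obstacle; the remaining work is the matrix bookkeeping for the Hermitian and anticommutation relations.
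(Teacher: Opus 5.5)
Your proof is correct and follows essentially the same route as the paper. In both, the Hilbert--Schmidt truncations $\rmi\beta C^\eps$ are shown to be self-adjoint via the Hermiticity of the $\alpha_j$ and their anticommutation with $\beta$, and one then passes to the limit using the strong convergence $C^\eps\to C$ from Lemma~\ref{lem5}. The differences are cosmetic: you split the kernel check into self-adjointness of $C^\eps$ plus $C^\eps\beta=-\beta C^\eps$, and you pass to the limit in the sesquilinear form rather than through weak limits of adjoints.
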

	
	\begin{proof}
		Using Lemma~\ref{lem5} we have $(\rmi \beta C^\eps)^*\xrightarrow{\eps\to 0}(\rmi \beta C)^*$ weakly. The operator $\rmi\beta C^\eps$ is an integral operator
		with the Hilbert-Schmidt integral kernel
		\[
		L_\eps(x,y):=-\dfrac{\beta \alpha\cdot(x-y)}{\omega_n|x-y|^n}\one_{|x-y|>\eps},
		\]
		and using the anti-commutation of $\alpha_j$ and $\beta$ one obtains
		\[
		L_\eps(y,x)^*=\dfrac{-\alpha\cdot(y-x)\beta}{\omega_n|y-x|^n}\one_{|y-x|>\eps}=\dfrac{-\beta \alpha\cdot(x-y)}{\omega_n|x-y|^n}\one_{|x-y|>\eps}=L_\eps(x,y),
		\]
		which shows that each $\rmi\beta C^\eps$ is self-adjoint. Hence,
		\[
		(\rmi \beta C)^*=\text{weak\,}\lim_{\eps\to 0}(\rmi \beta C^\eps)^*\equiv \text{weak\,}\lim_{\eps\to 0}\rmi \beta C^\eps
		=\text{strong\,}\lim_{\eps\to 0}\rmi \beta C^\eps=\rmi\beta C. \qedhere
		\]
	\end{proof}

	For $m,\mu\in\RR$ consider the following operator $\DD_{m,\mu}$ in $L^2(\RR^n,\CC^N)$:
	\begin{equation}
		\label{dirac-mu}
		\begin{aligned}
			\DD_{m,\mu} f&:=D_m f \text{ in }\RR^n\setminus\Sigma,\\
			\dom \DD_{m,\mu}&:=\Big\{
			f\in H^\half_\alpha(\RR^n\setminus\Sigma,\CC^N):\\
			&\quad \qquad \rmi (\alpha\cdot\nu)(\gamma_+f-\gamma_-f)+\mu\beta\dfrac{\gamma_+f + \gamma_-f}{2} =0
			\Big\}.	
		\end{aligned}
	\end{equation}
	The operator $\DD_{m,\mu}$ is often formally written as $\DD_m+\mu\beta\delta_\Sigma$ and interpreted as a Lorentz $\delta$-perturbation of strength $\mu$ supported on $\Sigma$, see \cite{BoundaryTripleWeylfunctions}.
	In view of the partial integration rule \eqref{partint} we have for any $f,g\in\dom\DD_{m,\mu}$:
	\begin{align*}
		\langle \DD_{m,\mu}&  f,g\rangle_{L^2(\RR^n,\CC^N)}-\langle f,\DD_{m,\mu} g\rangle_{L^2(\RR^n,\CC^N)}\\
		&=\langle D_m f, g\rangle_{L^2(\Omega_+,\CC^N)}-\langle f,D_m g\rangle_{L^2(\Omega_+,\CC^N)}\\
		&\phantom{=}+\langle D_m f, g\rangle_{L^2(\Omega_-,\CC^N)}-\langle f,D_m g\rangle_{L^2(\Omega_-,\CC^N)}\\
		&=-\Big\langle \rmi (\alpha\cdot\nu) \gamma_+ f,\gamma_+ g\Big\rangle_{L^2(\Sigma,\CC^N)}
		+\Big\langle \rmi (\alpha\cdot\nu) \gamma_- f,\gamma_- g\Big\rangle_{L^2(\Sigma,\CC^N)}\\
		&=-\Big\langle \rmi (\alpha\cdot\nu) \gamma_+ f,\dfrac{\gamma_+ g+\gamma_- g}{2}\Big\rangle_{L^2(\Sigma,\CC^N)}-\Big\langle \rmi (\alpha\cdot\nu) \gamma_+ f,\dfrac{\gamma_+ g-\gamma_- g}{2}\Big\rangle_{L^2(\Sigma,\CC^N)}\\
		&\phantom{=}+\Big\langle \rmi (\alpha\cdot\nu) \gamma_- f,\dfrac{\gamma_+ g+\gamma_- g}{2}\Big\rangle_{L^2(\Sigma,\CC^N)}-\Big\langle \rmi (\alpha\cdot\nu) \gamma_- f,\dfrac{\gamma_+ g-\gamma_- g}{2}\Big\rangle_{L^2(\Sigma,\CC^N)}\\
		&=\Big\langle -\rmi (\alpha\cdot\nu)(\gamma_+f - \gamma_- f),\dfrac{\gamma_+ g+\gamma_- g}{2}\Big\rangle_{L^2(\Sigma,\CC^N)}\\
		&\phantom{=}+
		\Big\langle -\rmi (\alpha\cdot\nu)\dfrac{\gamma_+f +\gamma_- f}{2},\gamma_+ g-\gamma_- g\Big\rangle_{L^2(\Sigma,\CC^N)}\\
		&=\Big\langle \mu\beta\dfrac{\gamma_+f + \gamma_-f}{2},\dfrac{\gamma_+ g+\gamma_- g}{2}\Big\rangle_{L^2(\Sigma,\CC^N)}\\
		&\phantom{=}+\Big\langle -\rmi (\alpha\cdot\nu)\dfrac{\gamma_+f +\gamma_- f}{2},
		\rmi(\alpha\cdot\nu)\mu\beta\dfrac{\gamma_+g + \gamma_-g}{2}
		\Big\rangle_{L^2(\Sigma,\CC^N)}\\
		&=\Big\langle \mu\beta\dfrac{\gamma_+f + \gamma_-f}{2},\dfrac{\gamma_+ g+\gamma_- g}{2}\Big\rangle_{L^2(\Sigma,\CC^N)}\\
		&\phantom{=}+\Big\langle -\mu \beta \dfrac{\gamma_+f +\gamma_- f}{2},
		\dfrac{\gamma_+g + \gamma_-g}{2}
		\Big\rangle_{L^2(\Sigma,\CC^N)}=0,
	\end{align*}
	i.e. $\DD_{m,\mu}$ is symmetric. We are now going to show that $\DD_{m,\mu}$ is self-adjoint.

	\begin{lemma}\label{lem-prime}
		Let $f\in H^\half_\alpha(\RR^n\setminus\Sigma)$ and $z\in\CC$ with $(D_m-z)f=0$ in $\RR^n\setminus\Sigma$, then in $\cD'(\RR^n,\CC^N)$ one has
		\[
		(D_m-z)f=h\delta_\Sigma,\quad h:=\rmi (\alpha\cdot\nu)(\gamma_+ f-\gamma_-f)\in L^2(\Sigma,\CC^N).
		\]
	\end{lemma}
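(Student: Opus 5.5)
The plan is to compute the distribution $(D_m-z)f$ directly by testing against $\varphi\in C^\infty_c(\RR^n,\CC^N)$ and applying the partial integration rule \eqref{partint} separately on $\Omega_+$ and $\Omega_-$. Since $f\in L^2(\RR^n,\CC^N)$, by definition
\[
\big((D_m-z)f\big)(\bar\varphi)=\langle f,(D_m-\bar z)\varphi\rangle_{L^2(\RR^n,\CC^N)},
\]
where the formal adjoint of $D_m-z$ is $D_m-\bar z$ (the matrices $\alpha_j,\beta$ are Hermitian). Splitting $\RR^n=\Omega_+\cup\Sigma\cup\Omega_-$ with $\Sigma$ of Lebesgue measure zero, the right-hand side becomes
\[
\langle f_+,(D_m-\bar z)\varphi\rangle_{L^2(\Omega_+)}+\langle f_-,(D_m-\bar z)\varphi\rangle_{L^2(\Omega_-)}.
\]

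For the integration by parts, $\varphi|_{\Omega_\pm}\in H^1(\Omega_\pm)\subset H^\half_\alpha(\Omega_\pm)$ and $f_\pm\in H^\half_\alpha(\Omega_\pm)$. Hence \eqref{partint} applies; the $z$-terms simply move across the inner product. Taking complex conjugates, \eqref{partint} gives
\[
\langle f_\pm,D_m\varphi\rangle_{\Omega_\pm}=\langle D_m f_\pm,\varphi\rangle_{\Omega_\pm}\mp\langle -\rmi(\alpha\cdot\nu)\gamma_\pm f,\gamma_\pm\varphi\rangle_\Sigma .
\]
Here $\gamma_\pm\varphi=\varphi|_\Sigma$, since both one-sided traces of a smooth function coincide with its restriction. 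Using $(D_m-z)f_\pm=0$ in $\Omega_\pm$, the volume terms vanish. Summing the two sides leaves
\[
\langle \rmi(\alpha\cdot\nu)(\gamma_+f-\gamma_-f),\varphi\rangle_{L^2(\Sigma)}=\int_\Sigma \langle h,\varphi\rangle\,\dd S ,
\]
which is exactly the action of $h\delta_\Sigma=\gamma^*h$ on $\varphi$. The signs need care: for $\Omega_+$ the boundary term in \eqref{partint} carries $-\rmi(\alpha\cdot\nu)$, and for $\Omega_-$ it carries $+\rmi(\alpha\cdot\nu)$. This is consistent with the check that $\Phi_{m,z}h$ satisfies the lemma by Corollary~\ref{corol4}, since $\rmi(\alpha\cdot\nu)(-\rmi(\alpha\cdot\nu))h=h$.

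The membership $h\in L^2(\Sigma,\CC^N)$ is immediate. The traces $\gamma_\pm$ are bounded from $H^\half_\alpha(\Omega_\pm)$ into $L^2(\Sigma)$, and $\alpha\cdot\nu$ is a bounded (unitary) matrix function, because $\nu\in L^\infty$ for a Lipschitz boundary.

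The only delicate point is justifying that \eqref{partint} may be used with $g=\varphi|_{\Omega_\pm}$ even when $\Omega_-$ (or $\Omega_+$) is unbounded. The test function has compact support, so this causes no difficulty. If needed, one can localize with a cutoff equal to $1$ near $\Sigma\cap\supp\varphi$, or simply note that \eqref{partint} is stated for all of $H^\half_\alpha(\Omega_\pm)$, which contains $\varphi|_{\Omega_\pm}$. Beyond this, the proof is bookkeeping of conjugations and signs.
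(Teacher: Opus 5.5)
Your proposal is correct and matches the paper's proof: test against $\varphi\in C^\infty_c(\RR^n,\CC^N)$, split $\langle f,(D_m-\bar z)\varphi\rangle_{L^2(\RR^n,\CC^N)}$ over $\Omega_\pm$, apply \eqref{partint} on each side so that the volume terms vanish, and collect the boundary terms into $\int_\Sigma\langle \rmi(\alpha\cdot\nu)(\gamma_+f-\gamma_-f),\gamma\varphi\rangle\,\dd S$. Your sign bookkeeping and your justification that $h\in L^2(\Sigma,\CC^N)$ are both right.
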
	
	
	\begin{proof}
		The
		partial integration rule \eqref{partint} shows that for any
		$\varphi\in C^\infty_c(\RR^n,\CC^N)$ one has
		\begin{align*}
			\big\langle f,&(D_m-\Bar z)\varphi\big\rangle_{L^2(\RR^n,\CC^N)}
			=\big\langle f,(D_m-\Bar z)\varphi\big\rangle_{L^2(\Omega_+,\CC^N)}
			+\big\langle f,(D_m-\Bar z)\varphi\big\rangle_{L^2(\Omega_-,\CC^N)}\\
			&=\big\langle \gamma_+f,-\rmi (\alpha\cdot\nu)\gamma\varphi\big\rangle_{L^2(\Sigma,\CC^N)}+\big\langle \underbrace{(D_m-z)f}_{=0},\varphi\big\rangle_{L^2(\Omega_+,\CC^N)}\\
			&\qquad+\big\langle \gamma_-f,\rmi(\alpha\cdot\nu)\gamma\varphi\big\rangle_{L^2(\Sigma,\CC^N)}+\big\langle \underbrace{(D_m-z)f}_{=0},\varphi\big\rangle_{L^2(\Omega_-,\CC^N)}\\
			&=\int_\Sigma \big\langle \rmi (\alpha\cdot\nu)(\gamma_+ f-\gamma_- f),\gamma \varphi\big\rangle_{\CC^N}\dd S. \qedhere
		\end{align*}	
	\end{proof}

	\begin{lemma}\label{lem8} For any $m,\mu\in\RR$ and $z\in\CC\setminus\spec \DD_m$ one has
		\begin{equation}
			\label{kerker}
			\ker(\DD_{m,\mu}-z)=\Phi_{m,z}\ker \Lambda_{m,z,\mu},\qquad \Lambda_{m,z,\mu}:=I+\mu\beta C_{m,z}.
		\end{equation}
	\end{lemma}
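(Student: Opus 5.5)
We prove the two inclusions in \eqref{kerker} separately, combining Lemma~\ref{lem-prime}, Corollary~\ref{corol4} and the fact that $G_{m,z}$ is a fundamental solution of $D_m-z$.

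\emph{Inclusion ``$\supset$''.} Let $h\in\ker\Lambda_{m,z,\mu}$, i.e. $h+\mu\beta C_{m,z}h=0$, and set $f:=\Phi_{m,z}h$.
\begin{itemize}
\item[(i)] From the discussion preceding Lemma~\ref{lem3}, $f\in H^\half_\alpha(\RR^n\setminus\Sigma,\CC^N)$ and $(D_m-z)f=0$ in $\RR^n\setminus\Sigma$.
\item[(ii)] By Corollary~\ref{corol4},
\[
\rmi(\alpha\cdot\nu)(\gamma_+f-\gamma_-f)=\rmi(\alpha\cdot\nu)\big(-\rmi(\alpha\cdot\nu)h\big)=h,
\]
using $(\alpha\cdot\nu)^2=I_N$. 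Also by Corollary~\ref{corol4}, $\tfrac12(\gamma_+f+\gamma_-f)=C_{m,z}h$.
\item[(iii)] Hence the transmission condition in \eqref{dirac-mu} becomes $h+\mu\beta C_{m,z}h=\Lambda_{m,z,\mu}h=0$, which holds by assumption.
\end{itemize}
So $f\in\dom\DD_{m,\mu}$ and $(\DD_{m,\mu}-z)f=0$.

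\emph{Inclusion ``$\subset$''.} Let $f\in\ker(\DD_{m,\mu}-z)$. Then $f\in H^\half_\alpha(\RR^n\setminus\Sigma,\CC^N)$ and $(D_m-z)f=0$ off $\Sigma$.
\begin{itemize}
\item[(i)] Lemma~\ref{lem-prime} gives $(D_m-z)f=h\delta_\Sigma$ in $\cD'(\RR^n,\CC^N)$, with $h:=\rmi(\alpha\cdot\nu)(\gamma_+f-\gamma_-f)\in L^2(\Sigma,\CC^N)$.
\item[(ii)] Put $g:=f-\Phi_{m,z}h$. Both $f$ and $\Phi_{m,z}h=G_{m,z}\conv(h\delta_\Sigma)$ lie in $L^2(\RR^n,\CC^N)$, and $(D_m-z)\Phi_{m,z}h=h\delta_\Sigma$. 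Hence $g\in L^2(\RR^n,\CC^N)$ and $(D_m-z)g=0$ in the sense of distributions on all of $\RR^n$.
\item[(iii)] Since $f$ and $\Phi_{m,z}h$ are $L^2$ functions, $g$ is tempered. Taking Fourier transforms gives $(h_m(\xi)-z)\Hat g(\xi)=0$ almost everywhere.
\item[(iv)] For $z\notin\spec\DD_m$, the matrix $h_m(\xi)-z$ is invertible for every $\xi$, because its eigenvalues are $\pm\sqrt{|\xi|^2+m^2}\ne z$. Therefore $\Hat g=0$, so $g=0$.
\end{itemize}
Equivalently, $g\in\dom\DD_m$ lies in $\ker(\DD_m-z)=\{0\}$ by Theorem~\ref{thm1}, which is the same distributional argument as in part (a) of its proof. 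Thus $f=\Phi_{m,z}h$.

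It remains to show $h\in\ker\Lambda_{m,z,\mu}$. By Corollary~\ref{corol4}, $\tfrac12(\gamma_++\gamma_-)f=C_{m,z}h$. The boundary condition of $\dom\DD_{m,\mu}$ then reads $h+\mu\beta C_{m,z}h=0$, as required.

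The only delicate point is step (ii)–(iv) of the second inclusion: the identification $f=\Phi_{m,z}h$. This needs the distributional identity from Lemma~\ref{lem-prime} together with the fact that $(D_m-z)g=0$ has no nonzero solution in $L^2(\RR^n,\CC^N)$, which we handle by the Fourier argument above. The rest is bookkeeping with the jump relations of Corollary~\ref{corol4}.
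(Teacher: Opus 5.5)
Your proof is correct and follows the paper's argument. For ``$\supset$'' you use the same computation of the transmission condition via Corollary~\ref{corol4}, and for ``$\subset$'' you use Lemma~\ref{lem-prime} together with the identification $f=\Phi_{m,z}h$. The only difference is in that identification: the paper writes the chain $f=\big((D_m-z)G_{m,z}\big)\conv f=G_{m,z}\conv (D_m-z)f$, while you use uniqueness of $L^2$ solutions of $(D_m-z)g=0$ on $\RR^n$ via the Fourier transform. This is the same fact, and your version avoids having to justify convolution manipulations with tempered distributions.
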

	
	\begin{proof}
		(a) Let $f:=\Phi_{m,z}h$ with $h\in L^2(\Sigma,\CC^N)$. 
		One has $f\in H^\half_\alpha(\RR^n\setminus\Sigma,\CC^N)$ with $(D_m-z)f=0$ in $\RR^n\setminus\Sigma$.
		Hence, one has $f\in \dom \DD_{m,\mu}$ if and only if $f$ satisfies the transmission condition at $\Sigma$.
		By Corollary~\ref{corol4} we have
		\begin{multline*}
			\rmi (\alpha\cdot\nu)(\gamma_+f-\gamma_-f)+\mu\beta\dfrac{\gamma_+f + \gamma_-f}{2}\\
			=\rmi (\alpha\cdot\nu) (-\rmi) (\alpha\cdot\nu)h+\mu \beta C_{m,z}h\equiv h + \mu\beta C_{m,z}h
			\equiv \Lambda_{m,z,\mu}h.
		\end{multline*}
		
		(b) The inclusion $\supset$ in \eqref{kerker} follows directly by (a).
		
		(c) Let $f\in \ker (\DD_{m,\mu}-z)$. Then, in particular, $f\in H^\half_\alpha(\RR^n\setminus\Sigma)$
		with $(D_m-z)f=0$ in $\Omega_\pm$, and Lemma~\ref{lem-prime} shows that $(D_m-z)f=h\delta_\Sigma$ in the sense of distributions on $\RR^n$, with the function $h:=\rmi (\alpha\cdot\nu)(\gamma_+ f-\gamma_-f)\in L^2(\Sigma,\CC^N)$.
		It follows that
		\begin{align*}
			f&=\delta_0\conv f=\big((D_m-z)G_{m,z}\big)\conv f=(D_m-z)( G_{m,z}\conv f)\\
			&=G_{m,z}\conv (D_m-z)f=G_{m,z}\conv h\delta_\Sigma=\Phi_{m,z}h,
		\end{align*}
		and the computation in (a) implies $h\in \ker\Lambda_{m,z,\mu}$.
	\end{proof}
	
	\begin{lemma}\label{lem10new}
		For any $m,\mu\in\RR$ and any $z\in\CC\setminus\RR$ the operator $\Lambda_{m,z,\mu}$ defined in \eqref{kerker} is an isomorphism of $L^2(\Sigma,\CC^N)$.
	\end{lemma}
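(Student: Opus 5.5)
The plan is to show that $\Lambda_{m,z,\mu}$ is a Fredholm operator of index zero and that it is injective. Its surjectivity and bounded invertibility then follow from the Fredholm alternative and the bounded inverse theorem.

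\emph{Fredholm property.} By Lemma~\ref{lem3} we have $C_{m,z}=C+K_{m,z}$ with $K_{m,z}$ compact. Hence
\[
\Lambda_{m,z,\mu}=I+\mu\beta C+\mu\beta K_{m,z}=\big(I-\rmi\mu S\big)+\mu\beta K_{m,z},\qquad S:=\rmi\beta C,
\]
where we used $\beta C=-\rmi(\rmi\beta C)$. By Corollary~\ref{corol7} the bounded operator $S$ is self-adjoint, so $\spec S\subset\RR$. The spectral theorem then gives that $I-\rmi\mu S$ is invertible, with
\[
\big\|(I-\rmi\mu S)^{-1}\big\|\le \sup_{s\in\RR}|1-\rmi\mu s|^{-1}\le 1,
\]
since $|1-\rmi\mu s|^2=1+\mu^2s^2\ge1$. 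Thus $\Lambda_{m,z,\mu}$ is a compact perturbation of an isomorphism. Such an operator is Fredholm of index zero, and by the Fredholm alternative it is an isomorphism as soon as it is injective.

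\emph{Injectivity.} Let $h\in\ker\Lambda_{m,z,\mu}$. Since $z\in\CC\setminus\RR\subset\CC\setminus\spec\DD_m$, Lemma~\ref{lem8} gives $f:=\Phi_{m,z}h\in\ker(\DD_{m,\mu}-z)$. We showed above that $\DD_{m,\mu}$ is symmetric, so
\[
z\|f\|^2=\langle\DD_{m,\mu}f,f\rangle\in\RR .
\]
As $\Im z\neq0$, this forces $f=0$. By Corollary~\ref{corol4} the operator $\Phi_{m,z}$ is injective, so $h=0$.

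The only substantive input is the self-adjointness of $\rmi\beta C$ from Corollary~\ref{corol7}, which reduces the Fredholm property of the singular-integral part to the elementary spectral bound above. This avoids any Calder\'on-type identity for $C$ on Lipschitz surfaces. The step I would check most carefully is the sign bookkeeping in rewriting $\mu\beta C$ as $-\rmi\mu S$. That sign is harmless in any case, because $1\pm\rmi\mu s\neq0$ for every real $s$.
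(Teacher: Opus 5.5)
Your proof is correct and matches the paper's argument. The paper likewise writes $I+\mu\beta C=-\rmi\mu\big(\tfrac{\rmi}{\mu}+\rmi\beta C\big)$ and uses the self-adjointness of $\rmi\beta C$ (Corollary~\ref{corol7}) to obtain a bounded inverse, then adds the compact term $\mu\beta K_{m,z}$ to get a zero-index Fredholm operator and derives injectivity from the symmetry of $\DD_{m,\mu}$ via Lemma~\ref{lem8}. Your version differs only in treating $\mu=0$ uniformly and in spelling out the injectivity of $\Phi_{m,z}$ (Corollary~\ref{corol4}), which the paper uses implicitly.
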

	
	\begin{proof}
		(a) For $\mu\ne 0$ we have
		\[
		\Lambda:=I+\mu \beta C= -\rmi \mu\big(\dfrac{\rmi}{\mu}+\rmi\beta C\big),
		\]
		and in view of the self-adjointness of $\rmi\beta C$ (Corollary~\ref{corol7}) the operator $\Lambda$ has a bounded inverse, in particular, it is a zero-index Fredholm operator. Recall that $\Lambda_{m,z,\mu}=\Lambda +\mu\beta K_{m,z}$, where $K_{m,z}$ is a compact operator (Lemma~\ref{lem3}), which means that
		\[
		\Lambda_{m,z,\mu} \text{ is a zero-index Fredholm operator.}
		\]
	The conclusion is immediate for $\mu=0$ too.
		
		(b) The operator $\DD_{m,\mu}$ is symmetric, hence, $\DD_{m,\mu}-z$ is injective, and Lemma~\ref{lem8}
		shows that $\Lambda_{m,z,\mu}$ is also injective. Due to the conclusion in (a),
		\[
		\Lambda_{m,z,\mu}: L^2(\Sigma,\CC^N)\to L^2(\Sigma,\CC^N) \text{ is bijective}.\qedhere
		\]
	\end{proof}
	
	\begin{theorem}\label{thm-sa}
		For any $m,\mu\in\RR$ the operator $\DD_{m,\mu}$ is self-adjoint.
	\end{theorem}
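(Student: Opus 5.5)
Since $\DD_{m,\mu}$ is already known to be symmetric, it suffices to show $\ran(\DD_{m,\mu}-z)=L^2(\RR^n,\CC^N)$ for every $z\in\CC\setminus\RR$; symmetry together with surjectivity of $\DD_{m,\mu}-z$ and $\DD_{m,\mu}-\Bar z$ then gives self-adjointness. We fix $z\in\CC\setminus\RR$ and $g\in L^2(\RR^n,\CC^N)$, and look for a solution of $(\DD_{m,\mu}-z)f=g$ of the Krein-type form
\[
f:=(\DD_m-z)^{-1}g+\Phi_{m,z}h,\qquad h\in L^2(\Sigma,\CC^N) \text{ to be determined}.
\]
Note that $z\notin\spec\DD_m$ by Theorem~\ref{thm1}, so all objects from Section~\ref{sec2} and Lemmas~\ref{lem3}--\ref{lem10new} are available.

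First we check regularity. The function $f_0:=(\DD_m-z)^{-1}g$ lies in $H^1(\RR^n,\CC^N)$, so its restrictions to $\Omega_\pm$ are in $H^\half_\alpha(\Omega_\pm,\CC^N)$, with $\gamma_+f_0=\gamma_-f_0=\gamma f_0$. The function $\Phi_{m,z}h$ lies in $H^\half_\alpha(\RR^n\setminus\Sigma,\CC^N)$ by the cited mapping property. Consequently $f\in H^\half_\alpha(\RR^n\setminus\Sigma,\CC^N)$, and since $(D_m-z)\Phi_{m,z}h=0$ off $\Sigma$, we get $(D_m-z)f=g$ in $\RR^n\setminus\Sigma$.

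Next we impose the transmission condition. Corollary~\ref{corol4} gives $(\gamma_+-\gamma_-)f=-\rmi(\alpha\cdot\nu)h$ and $\frac{\gamma_++\gamma_-}{2}f=\gamma f_0+C_{m,z}h$. Substituting these into the condition in \eqref{dirac-mu}, the requirement $f\in\dom\DD_{m,\mu}$ becomes
\[
h+\mu\beta C_{m,z}h+\mu\beta\gamma f_0=0,\quad\text{i.e.}\quad \Lambda_{m,z,\mu}h=-\mu\beta\,\gamma(\DD_m-z)^{-1}g.
\]
The right-hand side lies in $L^2(\Sigma,\CC^N)$ because $\gamma$ is bounded on $H^1$. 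By Lemma~\ref{lem10new}, $\Lambda_{m,z,\mu}$ is an isomorphism, so
\[
h:=-\Lambda_{m,z,\mu}^{-1}\mu\beta\,\gamma(\DD_m-z)^{-1}g
\]
is well defined. With this choice $f\in\dom\DD_{m,\mu}$ and $(\DD_{m,\mu}-z)f=g$, which proves surjectivity. As a by-product we obtain a Krein resolvent formula
\[
(\DD_{m,\mu}-z)^{-1}=(\DD_m-z)^{-1}-\mu\,\Phi_{m,z}\Lambda_{m,z,\mu}^{-1}\beta\,\gamma(\DD_m-z)^{-1}.
\]

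The analytic difficulty, namely invertibility of $\Lambda_{m,z,\mu}$, was already settled in Lemma~\ref{lem10new}. The only step needing care is the trace bookkeeping: the one-sided traces $\gamma_\pm$ of an $H^1$ function must agree with $\gamma$. This follows by density of $C^\infty_c$ in $H^1$ together with the boundedness of $\gamma_\pm$ on $H^\half_\alpha$.
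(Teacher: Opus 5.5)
Your proposal is correct and is essentially the paper's proof. The definition of $\Phi_{m,z}$ gives $\gamma(\DD_m-z)^{-1}=\Phi_{m,\Bar z}^*$, so your $f$ is exactly the paper's ansatz $f=(\DD_m-z)^{-1}g-\mu\Phi_{m,z}\Lambda_{m,z,\mu}^{-1}\beta\Phi_{m,\Bar z}^*g$. The only difference is cosmetic: you obtain $h$ by solving the transmission condition with Corollary~\ref{corol4} and Lemma~\ref{lem10new}, whereas the paper states $f$ directly and then checks the condition via Lemma~\ref{lem3}.
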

	
	\begin{proof}
		As $\DD_{m,\mu}$ is symmetric, it suffices to show $\ran(\DD_{m,\mu}-z)=L^2(\RR^n,\CC^N)$
		for every $z\in\CC\setminus\RR$.	 So let us pick any $z\in\CC\setminus\RR$ and $g\in L^2(\RR^n,\CC^N)$. In view of Lemma~\ref{lem10new} one may define
		\begin{equation}
			\label{fg}
			f:=(\DD_m-z)^{-1}g-\mu\Phi_{m,z}\Lambda_{m,z,\mu}^{-1}\beta\Phi_{m,\Bar z}^*g.
		\end{equation}
		We are going to show that $f\in \dom \DD_{m,\mu}$ with $(\DD_{m,\mu}-z)f=g$, which will conclude the proof.
		
		Due to the definition and mapping properties of the operators in \eqref{fg} the function $f$ belongs to $H^\half_\alpha(\RR^n\setminus\Sigma)$ and satisfies
		$(D_m-z)f=g$ in $\RR^n\setminus\Sigma$. It remains to show that $f$ satisfies the transmission condition on $\Sigma$.
		We have, using \eqref{Def Phi},
		\begin{align*}
			\gamma_\pm f&=\gamma(\DD_m-z)^{-1}g-\mu\gamma_\pm \Phi_{m,z}\Lambda_{m,z,\mu}^{-1}\beta\Phi_{m,\Bar z}^*g\\
			&=\Phi_{m,\Bar z}^*g -\mu\big(\mp\dfrac{\rmi}{2}(\alpha\cdot\nu) +C_{m,z}\big)\Lambda_{m,z,\mu}^{-1}\beta\Phi_{m,\Bar z}^*g\\
			&=\Big[\beta-\mu\big(\mp\dfrac{\rmi}{2}(\alpha\cdot\nu) +C_{m,z}\big)\Lambda_{m,z,\mu}^{-1}\Big] h,\qquad h:=\beta\Phi_{m,\Bar z}^*g,
		\end{align*}
		therefore,
		\begin{align*}
			\dfrac{\gamma_+ f +\gamma_- f}{2}&=(\beta-\mu C_{m,z}\Lambda_{m,z,\mu}^{-1})h, &
			\gamma_+ f -\gamma_- f&=\mu \rmi (\alpha\cdot\nu)\Lambda_{m,z,\mu}^{-1}h,
		\end{align*}
		and finally
		\begin{align*}
			\rmi (\alpha\cdot\nu)&(\gamma_+f -\gamma_-f)+\mu\beta\dfrac{\gamma_+f + \gamma_-f}{2}
			=-\mu \Lambda_{m,z,\mu}^{-1}h+\mu\beta(\beta-\mu C_{m,z}\Lambda_{m,z,\mu}^{-1})h\\
			&=-\mu(\underbrace{I+\mu\beta C_{m,z}}_{\equiv \Lambda_{m,z,\mu}})\Lambda_{m,z,\mu}^{-1}h+\mu h=0. \qedhere
		\end{align*}	
	\end{proof}

The MIT bag operators $A^\pm_m$ in $\Omega_\pm$ act by $f\mapsto D_m f$ on the domains
	\begin{equation}
		\label{eqmit00}
	\dom A^\pm_m=\big\{f\in H^\half_\alpha(\Omega_\pm,\CC^N):\ \gamma_\pm f=\pm\cB\gamma_\pm f\big\}.
	\end{equation}
	The minus sign for $\Omega_-$ is due to the fact that $\nu$ is the \emph{inner} unit normal for $\Omega_-$.

	\begin{corollary}\label{lem10}
		For any $m\in\RR$ one has $\DD_{m,2}=A^{+}_m\oplus A^{-}_m$.	In particular, both $A^{\pm}_m$ are self-adjoint.
	\end{corollary}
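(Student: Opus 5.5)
The plan is to show that for $\mu=2$ the transmission condition in \eqref{dirac-mu} splits into two independent one-sided conditions, namely $\gamma_+f=\cB\gamma_+f$ and $\gamma_-f=-\cB\gamma_-f$. Since $H^\half_\alpha(\RR^n\setminus\Sigma,\CC^N)=H^\half_\alpha(\Omega_+,\CC^N)\oplus H^\half_\alpha(\Omega_-,\CC^N)$ and $\DD_{m,2}$ acts as $D_m$ on each side, this identifies $\DD_{m,2}$ with $A^+_m\oplus A^-_m$.

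The computation is pointwise on $\Sigma$. Write $a:=\gamma_+f$ and $b:=\gamma_-f$. Multiply the condition
\[
\rmi(\alpha\cdot\nu)(a-b)+\beta(a+b)=0
\]
from the left by $\beta$, using $\beta^2=I_N$. This gives
\[
a+b+\rmi\beta(\alpha\cdot\nu)(a-b)=0.
\]
Since $\cB=-\rmi\beta(\alpha\cdot\nu)$, this reads
\[
(I-\cB)a+(I+\cB)b=0.
\]
Next record the algebraic properties of $\cB$. By anticommutation and $(\alpha\cdot\nu)^2=|\nu|^2I_N=I_N$, one gets $\cB^2=I_N$. The matrix $\cB$ is also Hermitian, because $(-\rmi\beta\alpha\cdot\nu)^*=\rmi(\alpha\cdot\nu)\beta=-\rmi\beta(\alpha\cdot\nu)$. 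Hence $P_\pm:=\tfrac12(I\pm\cB)$ are complementary orthogonal projections with $P_+P_-=0$.

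Apply $P_-$ and $P_+$ to the identity $(I-\cB)a+(I+\cB)b=0$, i.e. $2P_-a+2P_+b=0$. This yields $P_-a=0$ and $P_+b=0$ a.e. on $\Sigma$. These are exactly $a=\cB a$ and $b=-\cB b$. Conversely, if these two conditions hold, the identity holds, and multiplying back by $\beta$ recovers the transmission condition.

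It follows that $f=(f_+,f_-)\in\dom\DD_{m,2}$ if and only if $f_\pm\in\dom A^\pm_m$ as given in \eqref{eqmit00}. The actions coincide, so $\DD_{m,2}=A^+_m\oplus A^-_m$.

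Self-adjointness of each $A^\pm_m$ then follows from Theorem~\ref{thm-sa}. An orthogonal direct sum of operators is self-adjoint if and only if each summand is self-adjoint, since the adjoint of $T_1\oplus T_2$ is $T_1^*\oplus T_2^*$. There is no real obstacle here; the only point needing care is the sign and Hermiticity check for $\cB$, which guarantees the splitting by orthogonal projections.
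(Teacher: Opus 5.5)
Your proposal is correct and follows the paper's proof: you rewrite the $\mu=2$ transmission condition as $(I-\cB)\gamma_+f+(I+\cB)\gamma_-f=0$, split it by applying $\tfrac12(I\pm\cB)$, and deduce self-adjointness of $A^\pm_m$ from Theorem~\ref{thm-sa}. One small remark: the Hermiticity of $\cB$ is not needed for the splitting, since $(I\pm\cB)^2=2(I\pm\cB)$ and $(I-\cB)(I+\cB)=0$ already follow from $\cB^2=I_N$ alone, and these relations are exactly what the paper uses.
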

	
	\begin{proof}
		As the action and the Sobolev regularity of the functions in the operator domain agree, it remains to check that the transmission condition in \eqref{dirac-mu} with $\mu=2$ splits into two separate boundary conditions for $f_+$ and $f_-$.
		The transmission condition in question is $\rmi (\alpha\cdot\nu)(\gamma_+f-\gamma_-f)+\beta(\gamma_+f + \gamma_-f) =0$,
		which is equivalently rewritten as
		\begin{equation}
			\label{transm}
			(I-\cB)\gamma_+f_+ + (I+\cB)\gamma_-f_-=0.
		\end{equation}
		If the MIT boundary conditions $\gamma_\pm f_\pm=\pm\cB\gamma_\pm f_\pm$
		are satisfied, then obviously \eqref{transm} is also true. On the other hand, a simple computation shows that
		\[
		(I\pm \cB)^2=2(I\pm \cB),\quad (I-\cB)(I+\cB)=(I+\cB)(I-\cB)=0.
		\]
		Hence, if \eqref{transm} is satisfied, then by applying $I\pm\cB$ to  both sides one recovers two independent
		boundary conditions $(I\mp \cB)\gamma_\pm f_\pm=0$.	
		
		The above shows the required direct sum decomposition. The operator $\DD_{m,2}$ is self-adjoint (Theorem~\ref{thm-sa}), which shows that both components $A^{\Omega_\pm}_m$ are self-adjoint too.	
	\end{proof}

	For completeness we state and prove some simple spectral enclosures (which are currently spread over the literature).
	
	\begin{lemma}\label{lem13spec} For any $m\ge 0$ one has
		$\spec A^+_m\cap (-m,m)=\emptyset$
		and, in addition, $\pm m\notin \specp A^+_m$. The same holds for $A^-_m$.
	\end{lemma}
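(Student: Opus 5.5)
The plan is to prove the quadratic-form identity
\[
\|D_m f\|^2_{L^2(\Omega_+,\CC^N)} = \|D_0 f\|^2_{L^2(\Omega_+,\CC^N)} + m^2\|f\|^2_{L^2(\Omega_+,\CC^N)} + m\|\gamma_+ f\|^2_{L^2(\Sigma,\CC^N)},
\qquad f\in\dom A^+_m,
\]
and then read off both claims. For $A^-_m$ the same argument applies; there $\nu$ is the inner normal and the condition carries the opposite sign, and these two sign changes cancel, so the boundary term again comes out as $+m\|\gamma_- f\|^2$.

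First expand $\|D_m f\|^2=\|D_0 f\|^2+m^2\|f\|^2+2m\,\mathrm{Re}\langle D_0 f,\beta f\rangle$, using $\beta^2=I$ and $\beta$ Hermitian. The cross term is handled by the partial integration rule \eqref{partint} with $m=0$ and $g=\beta f$. We need $g\in H^\half_\alpha(\Omega_+,\CC^N)$: since $D_0\beta f=-\beta D_0 f\in L^2$ by anticommutation, $g$ has the same regularity as $f$. This gives
\[
\langle D_0 f,\beta f\rangle = \langle -\rmi(\alpha\cdot\nu)\gamma_+f,\beta\gamma_+ f\rangle_{L^2(\Sigma)} + \langle f, D_0\beta f\rangle
= \langle -\rmi\beta(\alpha\cdot\nu)\gamma_+f,\gamma_+ f\rangle_{L^2(\Sigma)} - \langle \beta f, D_0 f\rangle.
\]
Therefore
\[
2\,\mathrm{Re}\langle D_0 f,\beta f\rangle=\langle D_0 f,\beta f\rangle+\langle \beta f,D_0 f\rangle=\langle \cB\gamma_+ f,\gamma_+ f\rangle_{L^2(\Sigma)}=\|\gamma_+ f\|^2_{L^2(\Sigma)},
\]
by the MIT condition $\cB\gamma_+f=\gamma_+f$. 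Checking the signs and adjoint placements in this step, including that $\cB$ is pointwise Hermitian, is the only real work; the rest is routine.

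For the spectral gap, take $m\ge 0$. The identity gives $\|A^+_m f\|^2\ge m^2\|f\|^2$ on $\dom A^+_m$. Because $A^+_m$ is self-adjoint (Corollary~\ref{lem10}), this lower bound implies $\spec A^+_m\cap(-m,m)=\emptyset$, via the spectral theorem: $\|A f\|\ge m\|f\|$ forces $\spec A\subset\RR\setminus(-m,m)$.

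For the eigenvalues $\pm m$, suppose $A^+_m f=\pm m f$. Then $\|A^+_m f\|^2=m^2\|f\|^2$, and the identity forces $D_0 f=0$ in $\Omega_+$ and, if $m>0$, also $\gamma_+ f=0$. If $m>0$, extend $f$ by zero to $\RR^n$. Lemma~\ref{lem-prime} (with $m=z=0$), or directly \eqref{partint}, shows that $D_0$ of this extension vanishes in $\cD'(\RR^n)$, since the jump is $\gamma_+ f-0=0$. Hence the extension is an $L^2$ function with $|\xi|\Hat f=0$ (as in Theorem~\ref{thm1}), so $f=0$.

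The case $m=0$ is the main obstacle, since then $D_0f=0$ with no trace information from the identity. The MIT condition itself has to supply it: $\gamma_+f=\cB\gamma_+f$ together with $D_0 f=0$. The approach I would try first is to use $\|D_0 f\|^2=\|\nabla f\|^2$-type identities, which require $H^1$ regularity, or alternatively to apply the identity to $\beta f$ paired with the boundary term; I would look for the precise statement of this case in the paper's intended reading (likely $m>0$, with $m=0$ meaning only $0\notin\specp$). If needed, I would argue via Lemma~\ref{lem-prime} that the zero extension solves $D_0\tilde f=\rmi(\alpha\cdot\nu)\gamma_+f\,\delta_\Sigma$, so $\tilde f=\Phi_{0,\cdot}h$-type representation via $G$. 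The trace formula of Lemma~\ref{lem3} gives $\gamma_- \tilde f=0$, and combining this with the MIT projection $(I-\cB)\gamma_+f=0$ should force $h=0$.
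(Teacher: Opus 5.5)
Your identity $\|A^+_mf\|^2=\|D_0f\|^2+m^2\|f\|^2+m\|\gamma_+f\|^2$ is correct, and so are the resulting spectral gap, the sign bookkeeping for $A^-_m$, and the case $m>0$. For $m>0$, using $D_0f=0$ together with the Fourier argument of Theorem~\ref{thm1} is a bit more direct than the paper's route through elliptic regularity, $H^1_0$ and the Dirichlet Laplacian.

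The genuine gap is $m=0$, which the statement explicitly includes. There the first claim is vacuous, and the whole content is $\ker A^+_0=\{0\}$, which you do not prove. Your sketch does not close it as written:
\begin{itemize}
\item $0\in\spec\DD_0=\RR$, so neither $\Phi_{0,0}$ nor Lemma~\ref{lem3} is available.
\item Identifying the zero extension $\Tilde f$ with $G\conv(h\delta_\Sigma)$ needs extra justification, e.g.\ a Liouville-type argument.
\item ``Should force $h=0$'' hides the actual ingredient.
\end{itemize}
The route can in fact be completed. With $u:=\gamma_+f$, the MIT condition gives $h=-\beta u$, and $\gamma_-\Tilde f=0$ becomes $(\tfrac12+C\beta)u=0$. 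Since $\tfrac12+C\beta=\tfrac12\beta(I+2\beta C)\beta$ is invertible by Corollary~\ref{corol7}, this gives $u=0$. None of this is in your proposal, though.

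The missing idea is much simpler and uses only what you already have: $\dom A^+_m$ does not depend on $m$. Suppose $A^+_0f=0$ and fix any $m>0$. Then $f\in\dom A^+_m$ with $A^+_mf=m\beta f$, so $\|A^+_mf\|^2=m^2\|f\|^2$. Your identity for this $m$ then yields $\|D_0f\|^2+m\|\gamma_+f\|^2=0$. Hence $\gamma_+f=0$ and $D_0f=0$, and your zero-extension/Fourier argument gives $f=0$. This is step (c) of the paper's proof, which phrases it as a contradiction with the $m>0$ case via the spectral theorem.
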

	
	\begin{proof}
		(a) Let $f\in \dom A^+_m$, then
		\begin{equation}
			\label{eq00}
			\begin{aligned}
				\|A^+_mf&\|^2_{L^2(\Omega_+,\CC^N)}=\int_{\Omega_+} |D_0 f +m\beta f|^2_{\CC^N}\dd x\\
				&=\int_{\Omega_+} |D_0 f|^2\dd x+m^2\int_{\Omega_+} |\beta f|^2_{\CC^N}\dd x+2m\Re \int_{\Omega_+} \langle  D_0f,\beta f\rangle_{\CC^N}\,\dd x.
			\end{aligned}
		\end{equation}
		The integration by parts \eqref{partint} yields
		\begin{align*}
			\int_{\Omega_+} \langle  D_0f,\beta f\rangle_{\CC^N}\,\dd x
			&=\int_{\Sigma} \big\langle  -\rmi (\alpha\cdot\nu) \gamma_+ f,\beta \gamma_+ f\big\rangle_{\CC^N}\,\dd S
			+\int_{\Omega_+} \langle f, D_0\beta f\rangle_{\CC^N}\,\dd x\\
			&=\int_{\Sigma} \big\langle  -\rmi (\alpha\cdot\nu) \gamma_+ f,\beta \gamma_+ f\big\rangle_{\CC^N}\,\dd S
			-\int_{\Omega_+} \langle \beta f, D_0 f\rangle_{\CC^N}\,\dd x,
		\end{align*}
		therefore,
		\begin{equation*}
			2\Re \int_{\Omega_+} \langle  D_0f,\beta f\rangle_{\CC^N}\,\dd x
			=\int_{\Sigma} \big\langle  -\rmi (\alpha\cdot\nu) \gamma_+ f,\beta \gamma_+ f\big\rangle_{\CC^N}\,\dd S.
		\end{equation*}
		The MIT boundary condition for $f$ reads as $-\rmi (\alpha\cdot\nu) \gamma_+ f=\beta\gamma_+ f$,
		hence,
		\begin{align*}
			2\Re \int_{\Omega_+} \langle  D_0f,\beta f\rangle_{\CC^N}\,\dd x
			&=\int_{\Sigma} \big| -\rmi (\alpha\cdot\nu) \gamma_+ f\big|^2_{\CC^N}\,\dd S
			=\int_{\Sigma} |\gamma_+ f|^2_{\CC^N}\,\dd S.
		\end{align*}
		The substitution of the last equality into \eqref{eq00} results in
		\begin{equation}
			\begin{aligned}
				\|A^{+}_mf\|^2_{L^2({\Omega_+},\CC^N)}&=
				\int_{\Omega_+} |D_0 f|^2_{\CC^N}\dd x+m^2\int_{\Omega_+} |f|^2_{\CC^N}\dd x+m \int_{\partial\Omega} |\gamma_+ f|^2\,\dd S\\
				&\ge m^2 \|f\|^2_{L^2({\Omega_+},\CC^N)},
			\end{aligned}
			\label{amf}
		\end{equation}
		which proves the first assertion.
		
		(b) Let $m>0$ and $f\in \dom A^+_m$ with $A^+_m f=\pm mf$, then \eqref{amf} yields
		$\gamma_+ f=0$. Let $\Tilde f$ be the extension of $f$ by zero to $\RR^n$, then for any $\varphi\in C^\infty_c(\RR^n,\CC^N)$ one has with the help of the partial integration
		\begin{align*}
			\langle \Tilde f, D_m \varphi\rangle_{L^2(\RR^n,\CC^N)}&=\int_{\Sigma} \big\langle \underbrace{\gamma_+ f}_{=0},-\rmi(\alpha\cdot\nu) \gamma_+ \varphi \big\rangle_{\CC^N}\dd S
			+\int_{\Omega_+} \big\langle D_m f,\varphi\big\rangle_{\CC^N}\dd x\\
			&=\langle \widetilde {D_m f}, \varphi\rangle_{L^2(\RR^n,\CC^N)},
		\end{align*}
		with $\widetilde {D_m f}$ being the extension of $D_m f$ by zero to the whole of $\RR^n$. This means that
		$D_m \Tilde f=\widetilde {D_m f}$ in $\cD'(\RR^n,\CC^N)$. Hence, $\Tilde f\in L^2(\RR^n,\CC^N)$ with $D_m\Tilde f\in L^2(\RR^n,\CC^N)$, and the global elliptic regularity in $\RR^n$ shows the inclusion $\Tilde f\in H^1(\RR^n,\CC^N)$, and then $f\in H^1_0(\Omega,\CC^N)$. In addition, in $\cD'(\Omega,\CC^N)$ we have
		\[
		(-\Delta +m^2)f=D_m^2 f= A^{+}_m (A^+_m f)=m^2 f,
		\]
		i.e. $\Delta f=0$. Together with $f\in H^1_0({\Omega_+},\CC^N)$ this means that each component of $f$ lies in the kernel of
		the Dirichlet Laplacian on ${\Omega_+}$. As this kernel is trivial, we obtain $f=0$. This shows $\pm m\notin \specp A^{+}_m$ for all $m>0$.
		
		(c) It remains to show that $\ker A^+_0=\{0\}$. Assume by contradiction that there is some $f\in \ker A^+_0$ with $f\ne 0$, then for any $m\ne 0$ one has
		\begin{align*}
			\|A^+_m f\|_{L^2({\Omega_+},\CC^N)}&=\|\underbrace{A^{+}_0 f}_{=0}+m\beta f\|_{L^2({\Omega_+},\CC^N)}\\
			&=\|m\beta f\|_{L^2({\Omega_+},\CC^N)}=|m|\|f\|_{L^2({\Omega_+},\CC^N)},
		\end{align*}
		and the spectral theorem implies that $\spec A^+_m\cap \big[-|m|,|m|\big]\ne\emptyset$ for any $m\ne 0$, which
		contradicts the conclusion of (a) and (b) for $m>0$.
		
		(d) As $\Omega_+$ and $\Omega_-$ play symmetric roles, the assertions are valid for $A^-_m$ as well.
	\end{proof}

	\section{Infinite mass limit and resolvent convergence}\label{sec4}
	
	Consider the following two-mass Dirac operator $A_{m,M}$ in $L^2(\RR^n,\CC^N)$:
	\[
	A_{m,M}:= \DD_0 + (m\one_{\Omega_+}+M\one_{\Omega_-})\beta, \quad \dom A_{m,M}:=H^1(\RR^n,\CC^N).
	\]

	We further consider the resolvents, for $z$ in the respective resolvent sets,
	\[
	R^\pm_m(z):=(A^\pm_m-z)^{-1},\quad R_{m,M}(z):=(A_{m,M}-z)^{-1}.
	\]
	The following maps will be useful:
	\begin{align*}
		e_\pm:& \ L^2(\Omega_\pm,\CC^N)\to L^2(\RR^n,\CC^N) \text{ is the operator of extension by zero},\\
		r_\pm:& \ L^2(\RR^n,\CC^N)\to L^2(\Omega_\pm,\CC^N) \text{ is the operator of restriction to }\Omega_\pm.
	\end{align*}
	
	For
	\[
	\cB\equiv -\rmi\beta(\alpha\cdot\nu),
	\qquad\cP_\pm =\dfrac{I\pm\cB}{2},
	\]
	we have 
	\begin{gather*}
		\cP_\pm\cP_\mp=0,\quad \cB=\cP_+ - \cP_-, \quad I=\cP_+ + \cP_ -,\\
		\quad \cP_\pm \cB=\cB\cP_\pm =\pm \cP_\pm,
		\quad \ran \cP_\pm=\ker \cP_\mp.
	\end{gather*}
	We note that
	\begin{align*}
		\dom A^\pm_m&=\big\{f_\pm\in H^\half_\alpha(\Omega_\pm,\CC^N):\ \cP_\mp\gamma_\pm f_\pm=0\big\}\\
		&\equiv \big\{f_\pm\in H^\half_\alpha(\Omega_\pm,\CC^N):\ \gamma_\pm f_\pm\in \ran \cP_\pm\big\}.
	\end{align*}

	\begin{lemma}\label{lem14}
		Let $m\in\RR$ and $z\in\CC\setminus\spec A^{\pm}_m$. There exist constants $C_\pm>0$ such that	
		for any $\psi_\pm\in \cP_\mp L^2(\Sigma,\CC^N)$ there is a unique
		$f_\pm\in H^\half_\alpha({\Omega_\pm},\CC^N)$ with
		\begin{equation}
			\label{bcprob}
			(D_m-z)f_\pm= 0 \text{ in }\Omega_\pm,\qquad \cP_\mp\gamma_\pm f_\pm=\psi_\pm \text{ on }\Sigma,
		\end{equation}
		and it holds that
		\begin{equation}
			\label{norms}
			\|f_\pm\|_{H^\half_\alpha(\Omega_\pm,\CC^N)}\le C_\pm\|\psi_\pm\|_{L^2(\Sigma,\CC^N)}.
		\end{equation}
	\end{lemma}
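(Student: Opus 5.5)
The plan is to reduce the inhomogeneous boundary value problem \eqref{bcprob} to a problem with homogeneous MIT boundary condition. That reduced problem is solved by the resolvent $R^\pm_m(z)$, which exists because $z\notin\spec A^\pm_m$ and $A^\pm_m$ is self-adjoint by Corollary~\ref{lem10}. Uniqueness is immediate: if $f_\pm$ solves \eqref{bcprob} with $\psi_\pm=0$, then $\cP_\mp\gamma_\pm f_\pm=0$, so $f_\pm\in\dom A^\pm_m$ and $(A^\pm_m-z)f_\pm=0$, hence $f_\pm=0$. Existence and the bound \eqref{norms} then only require a bounded lifting of the boundary datum.

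For the lifting I would use the operator $\Phi_{m,w}$ with some auxiliary $w\in\CC\setminus\RR$ (for instance $w=\rmi$), so that $w\notin\spec\DD_m$. Given $\psi_\pm\in\cP_\mp L^2(\Sigma,\CC^N)$, I look for $h\in L^2(\Sigma,\CC^N)$ such that $g:=r_\pm\Phi_{m,w}h$ satisfies $\cP_\mp\gamma_\pm g=\psi_\pm$. The natural trick is to use the direct-sum operator $\DD_{m,2}=A^+_m\oplus A^-_m$ from Corollary~\ref{lem10} together with Lemma~\ref{lem10new}.

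Concretely, Corollary~\ref{corol4} shows that the jump of $\Phi_{m,w}h$ is $-\rmi(\alpha\cdot\nu)h$. The transmission expression for $\mu=2$ then equals $\Lambda_{m,w,2}h$, and this expression is exactly $(I-\cB)\gamma_+ + (I+\cB)\gamma_-$ applied to $\Phi_{m,w}h$, that is, $2\cP_-\gamma_+\Phi_{m,w}h+2\cP_+\gamma_-\Phi_{m,w}h$. Since $\Lambda_{m,w,2}$ is an isomorphism (Lemma~\ref{lem10new}), I choose $h:=\tfrac12\Lambda_{m,w,2}^{-1}(\psi_+ + 0)$ in the "$+$" case, and analogously in the "$-$" case. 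Applying $\cP_-$ to the identity $\cP_-\gamma_+\Phi h+\cP_+\gamma_-\Phi h=\psi_+$ and using $\cP_-\cP_+=0$ and $\cP_-\psi_+=\psi_+$ gives $\cP_-\gamma_+\Phi_{m,w}h=\psi_+$. Boundedness of $\Phi_{m,w}:L^2(\Sigma)\to H^\half_\alpha(\RR^n\setminus\Sigma)$ then yields $\|g\|_{H^\half_\alpha(\Omega_\pm)}\le c\|\psi_\pm\|$. Moreover $(D_m-w)g=0$ in $\Omega_\pm$, so $(D_m-z)g=(w-z)g\in L^2$.

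Next I set $f_\pm:=g-(w-z)R^\pm_m(z)g$. The second term lies in $\dom A^\pm_m$, so its $\cP_\mp$-trace vanishes. Hence $\cP_\mp\gamma_\pm f_\pm=\psi_\pm$ and $(D_m-z)f_\pm=(w-z)g-(w-z)g=0$.

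For the estimate, I need $R^\pm_m(z)$ to be bounded from $L^2$ into $H^\half_\alpha(\Omega_\pm)$. This holds because $\dom A^\pm_m$ equipped with the graph norm is a closed subspace of $H^\half_\alpha$: the inclusion is bounded, since the graph norm is equivalent to the $H^\half_\alpha$ norm on the domain by the closed graph theorem (the trace $\gamma_\pm$ is continuous on $H^\half_\alpha$, so $\dom A^\pm_m$ is closed in $H^\half_\alpha$, and $A^\pm_m$ is closed). This step, the justification that the graph norm controls the $H^\half$ norm, is the main technical point. I would handle it via the closed graph theorem applied to the identity map from $(\dom A^\pm_m,\text{graph norm})$ to $H^\half_\alpha$, which is closed because both norms dominate $L^2$. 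Combining the two bounds gives \eqref{norms}.
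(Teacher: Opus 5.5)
Your strategy is sound: uniqueness via $\ker(A^\pm_m-z)=\{0\}$, then a bounded lifting of the boundary datum built from $\Phi_{m,w}$ at a fixed nonreal $w$, then a resolvent correction. However, the lifting step contains an algebraic error that, as written, makes it fail.

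Corollary~\ref{lem10} shows that the $\mu=2$ transmission condition is equivalent to $(I-\cB)\gamma_+f+(I+\cB)\gamma_-f=0$, but this equivalence is obtained after multiplying by $\beta$. That is harmless for a homogeneous equation, not for your inhomogeneous one. Since $\rmi\beta(\alpha\cdot\nu)=-\cB$, for $F:=\Phi_{m,w}h$ one actually has
\[
\Lambda_{m,w,2}h=\rmi(\alpha\cdot\nu)(\gamma_+F-\gamma_-F)+\beta(\gamma_+F+\gamma_-F)=2\beta\big(\cP_-\gamma_+F+\cP_+\gamma_-F\big).
\]
With your choice $h=\tfrac12\Lambda_{m,w,2}^{-1}\psi_+$ this gives $\cP_-\gamma_+F+\cP_+\gamma_-F=\tfrac14\beta\psi_+$. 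Now $\beta$ anticommutes with $\cB$, so $\cP_-\beta=\beta\cP_+$, and $\cP_+\psi_+=0$. Applying $\cP_-$ therefore yields $\cP_-\gamma_+F=0$ rather than $\psi_+$. (Even with your version of the identity, the constant should have been $2$, not $\tfrac12$.)

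The correct choice is $h:=2\Lambda_{m,w,2}^{-1}\beta\psi_+$, respectively $h:=2\Lambda_{m,w,2}^{-1}\beta\psi_-$ in the exterior case. Applying $\cP_\mp$ then gives $\cP_\mp\gamma_\pm F=\psi_\pm$, as you intended. This is exactly the paper's density $\varphi=\beta\big(\half+C_{m,z}\beta\big)^{-1}\psi_+=2\Lambda_{m,z,2}^{-1}\beta\psi_+$, only evaluated at $w$ instead of $z$.

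With that correction the rest of your argument holds. The function $f_\pm=g-(w-z)R^\pm_m(z)g$ solves \eqref{bcprob}, and your closed-graph argument correctly shows that $R^\pm_m(z)$ is bounded from $L^2(\Omega_\pm,\CC^N)$ into $H^\half_\alpha(\Omega_\pm,\CC^N)$.

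The paper proceeds differently and works directly at $z$. It first shows that any solution must equal $r_+\Phi_{m,z}\varphi$: extend by zero, apply Lemma~\ref{lem-prime}, convolve with $G_{m,z}$, take the mean trace from Corollary~\ref{corol4}, and invert $\half+C_{m,z}\beta=\half\beta\Lambda_{m,z,2}\beta$. It then verifies that this formula indeed solves the problem. This gives an explicit layer-potential formula for $E^\pm_m(z)$ and the bound \eqref{norms} without any regularity information on the resolvent. But as written it relies on $G_{m,z}$ and on Lemma~\ref{lem10new}, so it covers only $z\in\CC\setminus\RR$, which is all that Section~\ref{sec4} needs.

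Your route puts all the layer-potential work at a single auxiliary $w$. It therefore proves the lemma for every $z\notin\spec A^\pm_m$ as stated, including real $z$, $z\in\spec\DD_m$, and $z\in\spec A^\mp_m$. The price is the extra closed-graph step, and you obtain only the indirect formula $E^\pm_m(z)=\big(I-(w-z)R^\pm_m(z)\big)E^\pm_m(w)$.
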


	\begin{proof}
		We consider $\Omega_+$ only for better readability. The case of $\Omega_-$ then follows, as $\Omega_\pm$ play symmetric roles. Denote by $f$ the extension of $f_+$ by zero to the whole of $\RR^n$.
		
		(a) The uniqueness is clear: if there are two solutions $f_+$ and $f'_+$, then $f_+-f'_+\in \dom A^+_m$
		with $(A^+_m-z)(f_+-f'_+)=0$, and the assumption $z\notin\spec A^+_m$ implies $f_+-f'_+=0$.
		
		(b) Assume for instance that $f_+$ satisfies the required properties. Then $f\in H^\half_\alpha(\RR^n\setminus\Sigma,\CC^N)$ with $(D_m-z)f=0$ in $\RR^n\setminus\Sigma$, and Lemma~\ref{lem-prime}
		yields
		\begin{align*}
			(D_m-z)f&=h\delta_\Sigma \text{ in }\cD'(\RR^n,\CC^N),\\
			h&:=\rmi (\alpha\cdot\nu)(\gamma_+ f-\gamma_-f)\equiv \rmi(\alpha\cdot \nu)\gamma_+ f_+\equiv -\beta \cB \gamma_+ f_+\\
			&=-\beta(\cP_+ \gamma_+ f_+ - \cP_- \gamma_+ f_+)\equiv \beta(\psi_+ - \cP_+ \gamma_+ f_+).
		\end{align*}
		By taking the convolution with the fundamental solution $G_{m,z}$ we obtain
		\begin{equation}
			\label{fexpr1}
			\begin{aligned}
				f&= G_{m,z}\star \big((D_m-z)f\big)\\
				&=G_{m,z}\star \big(\beta(\psi_+ - \cP_+ \gamma_+ f_+)\delta_\Sigma\big)\equiv \Phi_{m,z}\big(\beta(\psi_+ - \cP_+ \gamma_+ f_+)\big).
			\end{aligned}
		\end{equation}
		By applying $\half(\gamma_++\gamma_-)$ to both sides (note that $\gamma_- f=0$ by construction), with the help of Corollary~\ref{corol4} we arrive at
		\begin{equation}
			\label{eq11}
			\half\gamma_+ f_+=C_{m,z}\big(\beta(\psi_+ - \cP_+ \gamma_+ f_+)\big).
		\end{equation}
		We have $\gamma_+ f_+=\cP_+\gamma_+ f_+ +\cP_- \gamma_+ f_+=\cP_+\gamma_+ f_+ + \psi_+$,
		and \eqref{eq11} takes the form
		\begin{equation}
			\label{eq22}
			\big(\half -C_{m,z}\beta\big)\psi_+= -\big(\half +C_{m,z}\beta\big)\cP_+ \gamma_+ f_+.
		\end{equation}
		Note that we have
		\[
		\half +C_{m,z}\beta=\big(\half \beta +C_{m,z}\big)\beta=\half\beta(I+2\beta C_{m,z})\beta=\half\beta \Lambda_{m,z, 2}\beta,
		\]
		see \eqref{kerker}, which is a bijective operator in $L^2(\Sigma,\CC^N)$ due to Lemma~\ref{lem10new}.	The identity \eqref{eq22}
		implies then
		\[
		\cP_+ \gamma_+ f_+=-\Big(\half +C_{m,z}\beta\Big)^{-1}\Big(\half -C_{m,z}\beta\Big)\psi_+\equiv \psi_+ -\Big(\half +C_{m,z}\beta\Big)^{-1}\psi_+,
		\]
		and the substitution in \eqref{fexpr1} yields
		\begin{equation}
			f=\Phi_{m,z}\varphi,\qquad
			\varphi:=\beta\Big(\half +C_{m,z}\beta\Big)^{-1}\psi_+.
			\label{fexpr2}
		\end{equation}
		
		(c) On the other hand, given $\psi_+$ let us define $f$ by \eqref{fexpr2}. By construction one has $f\in H^\half_\alpha(\RR^n\setminus\Sigma,\CC^N)$ with $(D_m-z)f=0$ in $\RR^n\setminus\Sigma$, therefore, $(D_m-z)f_+=0$ in $\Omega_+$. Further we have, with the help of Lemma~\ref{lem3},
		\begin{equation}
			\label{eq33}
			\begin{aligned}
				\cP_-\gamma_+ f_+&=\cP_-\gamma_+ \Phi_{m,z}\varphi=\cP_- \Big(-\half \cB\beta +C_{m,z}\Big)\varphi.
			\end{aligned}
		\end{equation}
		Using $\cP_-\cB=-\cP_-$ and the explicit expression of $\varphi$ we compute:
		\begin{align*}
			\cP_- &\Big(-\half \cB\beta +C_{m,z}\Big)\varphi=\cP_- \Big(-\half \cB +C_{m,z}\beta\Big)\beta\varphi=\cP_- \Big(\half  +C_{m,z}\beta\Big)\beta\varphi\\
			&=\cP_- \Big(\half  +C_{m,z}\beta\Big) \Big(\half +C_{m,z}\beta\Big)^{-1}\psi_+
			=\cP_- \psi_+ =\psi_+.
		\end{align*}
		The substitution into~\eqref{eq33} yields $\cP_-\gamma_+f_+=\psi_+$, and this shows the existence. The norm estimate \eqref{norms}
		follows from the boundedness of the mapping entering~\eqref{fexpr2}.
	\end{proof}
	
	In view of Lemma~\ref{lem14}, for $m\in\RR$ and $z\in\CC\setminus\spec A^\pm_m$ the maps
	\begin{gather*}
		E^\pm_m(z):\ \cP_\mp L^2(\Sigma,\CC^N)\to H^\half_\alpha(\Omega_\pm,\CC^N),\\
		E^\pm_m(z)\psi_\pm:= \text{ the solution $f_\pm$ to \eqref{bcprob}.}
	\end{gather*}
	are well-defined bounded linear operators. We further consider the bounded
	linear operators
	\[
	U^\pm_m(z):= \cP_\pm \gamma_\pm E^\pm_m(z):\ \cP_\mp L^2(\Sigma,\CC^N)\to \cP_\pm L^2(\Sigma,\CC^N),
	\]
	which are sometimes referred to as Poincar\'e-Steklov operators, see e.g.~\cite{bbz} for a detailed microlocal study in the case of smooth $\Sigma$. Let us establish some useful identities.

	\begin{lemma} For any $m,M\in\RR$ and $z\in\CC\setminus\RR$ one has
		\begin{align}
			r_+R_{m,M}(z) &= E_{m}^{+}(z)\cP_-\gamma R_{m,M}(z) + R_{m}^{+}(z)r_+, \label{interior resolvent perturbed op}\\
			r_-R_{m,M}(z) &= E_{M}^{-}(z)\cP_+\gamma R_{m,M}(z) + R_{M}^{-}(z)r_{-}, \label{exterior resolvent perturbed op}\\
			\cP_+ \gamma R_{m,M}(z) &= U_m^{+}(z)\cP_- \gamma R_{m,M}(z)+\cP_+ \gamma_+ R_m^{+}(z)r_{+}, \label{interior trace resolvent perturbed op}\\
			\cP_- \gamma R_{m,M}(z) &= U_{M}^{-}(z)\cP_+ \gamma R_{m,M}(z)+\cP_- \gamma_- R_{M}^{-}(z)r_{-}, \label{exterior trace resolvent perturbed op}
		\end{align}
		and
		\begin{multline}
			R_{m,M}(z)-e_+ R^+_m(z)r_+=e_+E^+_m(z)\cP_- \gamma R_{m,M}(z)\\
			+e_-E^-_M(z)\cP_+ \gamma R_{m,M}(z)+e_-R^-_M(z)r_-.
			\label{resident}
		\end{multline}	
	\end{lemma}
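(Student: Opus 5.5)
Fix $g\in L^2(\RR^n,\CC^N)$ and set $u:=R_{m,M}(z)g\in H^1(\RR^n,\CC^N)$. Then $u_\pm:=r_\pm u\in H^1(\Omega_\pm,\CC^N)\subset H^\half_\alpha(\Omega_\pm,\CC^N)$, and $\gamma_+u_+=\gamma_-u_-=\gamma u$ because $u$ is $H^1$ across $\Sigma$. Moreover $(D_m-z)u_+=r_+g$ in $\Omega_+$ and $(D_M-z)u_-=r_-g$ in $\Omega_-$, since on each side the mass term of $A_{m,M}$ is constant. As $z\notin\RR$, Lemma~\ref{lem13spec} and the self-adjointness of $A^\pm_m$ (Corollary~\ref{lem10}) guarantee that all of $R^\pm_m(z)$, $R^-_M(z)$, $E^+_m(z)$, $E^-_M(z)$ are defined. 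The plan is to obtain \eqref{interior resolvent perturbed op} from the uniqueness part of Lemma~\ref{lem14}, deduce the trace identities from it, and assemble \eqref{resident} at the end.

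For \eqref{interior resolvent perturbed op}, consider $w:=u_+-R^+_m(z)r_+g$. It lies in $H^\half_\alpha(\Omega_+,\CC^N)$ and satisfies $(D_m-z)w=r_+g-r_+g=0$ in $\Omega_+$. Its boundary data is $\cP_-\gamma_+w=\cP_-\gamma u-\cP_-\gamma_+R^+_m(z)r_+g=\cP_-\gamma u$, because $R^+_m(z)r_+g\in\dom A^+_m$ forces $\cP_-\gamma_+R^+_m(z)r_+g=0$. Hence $w$ solves \eqref{bcprob} with $\psi_+=\cP_-\gamma u\in\cP_-L^2(\Sigma,\CC^N)$. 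By uniqueness in Lemma~\ref{lem14}, $w=E^+_m(z)\cP_-\gamma u$, which is \eqref{interior resolvent perturbed op}.

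The identity \eqref{exterior resolvent perturbed op} follows the same way on $\Omega_-$, with mass $M$, the projector $\cP_+$ and the domain condition $\cP_+\gamma_-f=0$ for $A^-_M$. For \eqref{interior trace resolvent perturbed op}, apply $\cP_+\gamma_+$ to \eqref{interior resolvent perturbed op}. Since $\gamma_+u_+=\gamma u$, this gives $\cP_+\gamma u=U^+_m(z)\cP_-\gamma u+\cP_+\gamma_+R^+_m(z)r_+g$. Applying $\cP_-\gamma_-$ to \eqref{exterior resolvent perturbed op} gives \eqref{exterior trace resolvent perturbed op} in the same manner.

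For \eqref{resident}, write $u=e_+r_+u+e_-r_-u$ and substitute \eqref{interior resolvent perturbed op} and \eqref{exterior resolvent perturbed op}. Subtracting $e_+R^+_m(z)r_+g$ then yields exactly the right-hand side of \eqref{resident}.

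The only delicate point is the trace identification $\gamma_\pm u_\pm=\gamma u$ for $u\in H^1(\RR^n,\CC^N)$, i.e. that the one-sided traces $\gamma_\pm$ on $H^\half_\alpha$ agree with the standard trace on $H^1$. This holds because both are continuous extensions of restriction to $\Sigma$ from smooth functions, and $C^\infty(\overline{\Omega_\pm})$ is dense in $H^1(\Omega_\pm)$ for Lipschitz $\Omega_\pm$. Everything else is bookkeeping of the projector relations $\cP_\pm\cP_\mp=0$.
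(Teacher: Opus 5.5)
Your proof is correct and follows the paper's argument. You restrict $R_{m,M}(z)g\in H^1$ to $\Omega_\pm$ and write each piece as $E^+_m(z)\cP_-\gamma R_{m,M}(z)g$, respectively $E^-_M(z)\cP_+\gamma R_{m,M}(z)g$, plus the MIT resolvent term, making the paper's implicit use of uniqueness in Lemma~\ref{lem14} explicit; you then apply $\cP_\pm\gamma_\pm$ and reassemble via $e_+r_++e_-r_-=I$. The appeal to Lemma~\ref{lem13spec} is superfluous, since self-adjointness alone puts $z\in\CC\setminus\RR$ in all the relevant resolvent sets.
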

	
	\begin{proof}
		Let $g\in L^2(\RR^n,\CC^N)$ and $f:=R_{m,M}(z)g\in H^1(\RR^n,\CC^N)$.
		By definition, 
		\begin{gather*}
			\left\{\begin{array}{rl}
				(D_m-z)f_+ &= g_+ \text{ in }\Omega_+,\\
				\cP_- \gamma_+ f_+&=h_+ \text{ on } \Sigma,
			\end{array}\right.
			\qquad 
			\left\{\begin{array}{rl}
				(D_M-z)f_-&=g_- \text{ in }\Omega_-,\\
				\cP_+ \gamma_- f_-&=h_- \text{ on }\Sigma,
			\end{array}\right.\\
			h_\pm := \cP_\mp \gamma_\pm f_\pm \equiv \cP_\mp \gamma f,
		\end{gather*}
		which gives the representations
		\begin{align}
			f_+&=E^+_m(z)h_+ + R_m^+(z)g_+, \label{fplus}\\
			f_-&=E^-_M(z)h_- + R_M^-(z) g_-, \label{fminus}
		\end{align}
		and this gives the identities \eqref{interior resolvent perturbed op} and \eqref{exterior resolvent perturbed op}.
		By applying $\cP_+\gamma_+$ to both sides of \eqref{fplus} we obtain
		\[
		\cP_+\gamma_+ f_+\equiv \cP_+\gamma f = U^+_m(z)h_+ + \cP_+\gamma_+ R_m^+(z)g_+,
		\]
		which is precisely \eqref{interior trace resolvent perturbed op}. Similarly, by applying $\cP_-\gamma_-$ to \eqref{fminus}
		one arrives at \eqref{exterior trace resolvent perturbed op}.
		
		We further have
		\begin{align*}
			R_{m,M}(z)&g-e_+R^+_m(z)r_+ g=f- e_+R^+_m(z)g_+\\
			&\equiv e_+f_++e_-f_--e_+  R^+_m(z)g_+\equiv e_+\big(f_+-R^+_m(z)g_+\big)+e_- f_-\\
			&\stackrel{ \eqref{fplus}}{=} e_+E^+_m(z) h_+ +e_- f_-
			\stackrel{\eqref{fminus}}{=}e_+E^+_m(z)h_+ +e_-\big(E^-_M(z) h_- + R^-_M(z)g_-\big),
		\end{align*}
		and by recalling the definitions of $g_\pm$ and $h_\pm$ we arrive at~\eqref{resident}.	
	\end{proof}
	
	Let us pass to estimating various terms in the asymptotic regime $M\to+\infty$.
	
	\begin{lemma}\label{lem16est}
		Let $m\in\RR$ and $z\in\CC\setminus\RR$. Then one can find some $C>0$ and $M_0>0$ such that for all $M> M_0$ 
		one has the following estimates for $L^2$-based operator norms:
		\begin{align}
			\label{control, exterior MIT resolvent}
			\big\|R_M^-(z)\big\|& \le C M^{-1},\\
			\label{control, exterior solution MIT BVP}
			\big\|E^-_M(z)\big\|&\le C M^{-\half},\\
			\label{control, interior trace by exterior trace}
			\big\| \cP_+ \gamma R_{m,M}(z)\big\|&\le C\big\|\cP_- \gamma R_{m,M}(z)\big\|+C.
		\end{align}
		
	\end{lemma}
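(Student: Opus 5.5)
Three estimates must be shown; the main tool is the quadratic identity \eqref{amf} from Lemma~\ref{lem13spec}, applied in $\Omega_-$ with mass $M$.

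\emph{Estimate \eqref{control, exterior MIT resolvent}.} Repeating the computation of Lemma~\ref{lem13spec}(a) in $\Omega_-$ (where $\nu$ is the inner normal, so the boundary condition is $\gamma_-f=-\cB\gamma_-f$) gives, for $f\in\dom A^-_M$,
\[
\|A^-_Mf\|^2=\|D_0f\|^2+M^2\|f\|^2+M\|\gamma_-f\|^2_{L^2(\Sigma)} .
\]
Hence $\spec A^-_M\cap(-M,M)=\emptyset$. Since $A^-_M$ is self-adjoint, for fixed $z\notin\RR$ and $M>2|z|$ we get $\|R^-_M(z)\|\le \dist(z,\spec A^-_M)^{-1}\le (M-|z|)^{-1}\le 2/M$.

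\emph{Estimate \eqref{control, exterior solution MIT BVP}.} Take $\psi\in\cP_+L^2(\Sigma)$ and $f=E^-_M(z)\psi$. We reduce to the domain of $A^-_M$. For this, fix one lifting $\ell\psi:=E^-_{0}(z_0)\psi$ with some fixed $z_0\notin\RR$, which is bounded $L^2(\Sigma)\to H^\half_\alpha(\Omega_-)$ independently of $M$ by Lemma~\ref{lem14}. Then
\[
f=\ell\psi-R^-_M(z)(D_M-z)\ell\psi=\ell\psi-R^-_M(z)\bigl(M\beta\ell\psi+(z_0-z)\ell\psi\bigr).
\]
This only gives $O(1)$, so it is too crude, and I would instead use the identity above directly. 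Apply the integration by parts \eqref{partint} to $\langle (D_M-z)f,f\rangle=0$ and take real/imaginary parts, in the style of \eqref{amf}. The identity $\|(D_M)f\|^2=|z|^2\|f\|^2$ then yields
\[
|z|^2\|f\|^2=\|D_0f\|^2+M^2\|f\|^2+M\int_\Sigma\langle\ldots\rangle ,
\]
where the boundary term now involves $\gamma_-f$. Writing $\gamma_-f=\cP_-\gamma_-f+\psi$, the boundary form should be $M(\|\cP_-\gamma_-f\|^2-\|\psi\|^2)$ plus cross terms, which vanish by $\cP_+\cP_-=0$ and orthogonality of the ranges; the $\cP_\pm$ are orthogonal projections since $\cB$ is Hermitian and unitary. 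This gives $(M^2-|z|^2)\|f\|^2\le M\|\psi\|^2$, i.e. $\|f\|\le CM^{-\half}\|\psi\|$. The delicate point is the justification of \eqref{partint} for $H^\half_\alpha$ functions with $D_0f\in L^2$, which is available, and checking the sign of the boundary term.

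\emph{Estimate \eqref{control, interior trace by exterior trace}.} Use \eqref{interior trace resolvent perturbed op}: $\cP_+\gamma R_{m,M}=U^+_m(z)\cP_-\gamma R_{m,M}+\cP_+\gamma_+R^+_m(z)r_+$. Here $U^+_m(z)$ is independent of $M$ and bounded by Lemma~\ref{lem14}. Also $\gamma_+R^+_m(z)$ is bounded $L^2\to L^2(\Sigma)$, since $R^+_m(z)$ maps into $H^\half_\alpha$ (the graph norm of $A^+_m$ controls the $H^\half_\alpha$-norm by the closed graph theorem) and $\gamma_+$ is bounded. Taking norms gives the estimate with $C:=\max\{\|U^+_m(z)\|,\|\gamma_+R^+_m(z)\|\}$. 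The main obstacle overall is the sign bookkeeping in the boundary identity for the second estimate.
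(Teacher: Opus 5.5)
Your proposal is correct and follows the paper's proof. The first bound uses the spectral gap of Lemma~\ref{lem13spec}, the second the quadratic identity from \eqref{partint} with the boundary form $\langle\cB\gamma_-f,\gamma_-f\rangle$ split via $\cP_\pm$, and the third the identity \eqref{interior trace resolvent perturbed op}; the only cosmetic difference is that you expand $\|D_Mf\|^2=|z|^2\|f\|^2$ to get $(M^2-|z|^2)\|f\|^2\le M\|\psi\|^2$, whereas the paper expands $\|(D_M-z)f\|^2=0$ and obtains $(M-2|z|)\|f\|^2\le\|\psi\|^2$.
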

	
	\begin{proof}
		Throughout the proof we assume $M>0$. Due to Lemma~\ref{lem13spec}, the spectral theorem for self-adjoint operators implies
		\[
		\big\|R_M^-(z)\big\|=\dfrac{1}{\dist(z,\spec A^-_M)}\le \dfrac{1}{\dist\big(z,\RR\setminus(-M,M)\big)},
		\]
		which yields \eqref{control, exterior MIT resolvent}.
		
		Let $h\in \cP_+ L^2(\Sigma,\CC^N)$ and $f:=E_M^-(z)h$. Due to the definition of $E_M^-$ we have
		\begin{equation*}
			\begin{aligned}
				0&=\big\|(D_M-z)f\big\|^2_{L^2(\Omega_-,\CC^N)}\\
				&=\big\langle (D_0-z) f + M\beta  f, (D_0-z) f + M\beta f\big\rangle_{L^2(\Omega_-,\CC^N)}\\
				&=\underbrace{\big\|(D_0-z) f\big\|^2_{L^2(\Omega_-,\CC^N)}}_{\ge 0}+ M^2\|\beta f\big\|^2_{L^2(\Omega_-,\CC^N)}\\
				&\qquad
				+2M\Re \big[\langle D_0 f,\beta f\rangle_{L^2(\Omega_-,\CC^N)}\big]
				-2M\Re \big[z\langle \beta f,f\rangle_{L^2(\Omega_-,\CC^N)}\big]\\
				&\ge M^2\|f\|^2_{L^2(\Omega_-,\CC^N)}+2M\Re \big[\langle D_0 f,\beta f\rangle_{L^2(\Omega_-,\CC^N)}\big]
				-2M|z| \|f\|^2_{L^2(\Omega_-,\CC^N)},
			\end{aligned}
		\end{equation*}
		i.e.
		\begin{equation}
			\label{est000}
			\big(M-2|z|\big)\|f\|^2_{L^2(\Omega_-,\CC^N)}\le -2\Re \big[\langle D_0 f,\beta f\rangle_{L^2(\Omega_-,\CC^N)}\big].
		\end{equation}
		
		Using~ the partial integration \eqref{partint} we obtain
		\begin{equation}
			\label{redbeta}
			\begin{aligned}
				\langle D_0 f,\beta f\rangle_{L^2(\Omega_-,\CC^N)}
				&=\big\langle  \rmi (\alpha\cdot\nu) \gamma_- f,\beta \gamma_- f\big\rangle_{L^2(\Sigma,\CC^N)}
				+	\big\langle  f,D_0\beta f\big\rangle_{L^2(\Omega_-,\CC^N)}\\
				&=\big\langle  \rmi (\alpha\cdot\nu) \gamma_- f,\beta \gamma_- f\big\rangle_{L^2(\Sigma,\CC^N)}
				-	\langle \beta  f,D_0 f\rangle_{L^2(\Omega_-,\CC^N)},
			\end{aligned}
		\end{equation}
		therefore,
		\begin{align*}
			-2\Re &\langle D_0 f,\beta f\rangle_{L^2(\Omega_-,\CC^N)}
			=- \big\langle  \rmi (\alpha\cdot\nu) \gamma_- f,\beta \gamma_- f\big\rangle_{L^2(\Sigma,\CC^N)}\\
			&=\langle  \cB \gamma_- f, \gamma_- f\rangle_{L^2(\Sigma,\CC^N)}
			=\big\langle  (\cP_+-\cP_-) \gamma_- f, (\cP_+ +\cP_-)\gamma_- f\big\rangle_{L^2(\Sigma,\CC^N)}\\
			&=\big( \|\cP_+ \gamma_- f\|^2_{L^2(\Sigma,\CC^N)}-\|\cP_- \gamma_- f\|^2_{L^2(\Sigma,\CC^N)}\big)
			\le \|\cP_+ \gamma_- f\|^2_{L^2(\Sigma,\CC^N)}\equiv \|h\|^2_{L^2(\Sigma,\CC^N)}.
		\end{align*}
		then the substitution into \eqref{est000} yields, for $M> 2|z|$,
		\[
		\|E_M^-(z) h\|^2_{L^2(\Omega_-,\CC^N)}\equiv
		\|f\|^2_{L^2(\Omega_-,\CC^N)}\le\dfrac{1}{M-2|z|}\|h\|^2_{L^2(\Sigma,\CC^N)},
		\]
		which in turn implies \eqref{control, exterior solution MIT BVP}.
		
		Due to \eqref{interior trace resolvent perturbed op} we have
		\begin{align*}
			\big\|\cP_+ \gamma R_{m,M}(z)\big\| &\le \big\|U_m^{+}(z)\cP_- \gamma R_{m,M}(z)\big\|+\big\|\cP_+ \gamma R_m^{+}(z)r_{+}\big\|\\
			&\le \big\|U_m^{+}(z)\big\|\cdot\big\|\cP_- \gamma R_{m,M}(z)\big\| + \big\|\cP_+ \gamma R_m^{+}(z)r_{+}\big\|,
		\end{align*}
		which proves the remaining estimate \eqref{control, interior trace by exterior trace}.
	\end{proof}
	
	In order to arrive at the resolvent convergence, we now recall the assumption \eqref{robinineq}, which
	we rewrite as follows:
	\begin{equation}
		\label{assump}
		\begin{minipage}{110mm}
			There exists a function $\rho:[0,\infty)\to [0,\infty)$ with
			\[
			\lim_{M\to+\infty}\dfrac{\rho(M)}{M^2}=0
			\]
			such that for any $f\in H^1(\Omega_-)$ and $M>0$ it holds that
			\[
			\int_{\Omega_-} |\nabla f|^2\dd x-M\int_{\partial\Omega_-} |f|^2\dd S\ge -\big(M^2+\rho(M)\big)\int_{\Omega_-}|f|^2\dd x.
			\]
		\end{minipage}
	\end{equation}	
	To have simpler expressions we begin with the massless case $m=0$.
	\begin{lemma}\label{lem17conv}
		Let \eqref{assump} be satisfied. Then for any $z\in\CC\setminus\RR$ one has
		\[
		\big\|R_{0,M}(z)-R^+_0(z)\oplus \zero\big\|=O\Big( \dfrac{\sqrt{\rho(M)}+\sqrt{M}}{M}\Big),\quad M\to +\infty.
		\]
	\end{lemma}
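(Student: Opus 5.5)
The plan is to start from the identity \eqref{resident} with $m=0$ and bound each of its three terms on the right-hand side. The last term is handled directly: by \eqref{control, exterior MIT resolvent}, $\|e_-R^-_M(z)r_-\|\le CM^{-1}$. For the middle term I will combine \eqref{control, exterior solution MIT BVP} with \eqref{control, interior trace by exterior trace}, which gives
\[
\|e_-E^-_M(z)\cP_+\gamma R_{0,M}(z)\|\le CM^{-\half}\big(\|\cP_-\gamma R_{0,M}(z)\|+1\big).
\]
For the first term, Lemma~\ref{lem14} gives $\|e_+E^+_0(z)\cP_-\gamma R_{0,M}(z)\|\le C\|\cP_-\gamma R_{0,M}(z)\|$. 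Everything therefore reduces to one estimate, which I take as the central goal:
\[
\big\|\cP_-\gamma R_{0,M}(z)\big\|=O\Big(\dfrac{\sqrt{\rho(M)}+\sqrt M}{M}\Big).
\]
Once it holds, the middle term is $O(M^{-\half})$, which is already of the claimed order, and the lemma follows.

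To obtain this estimate I will use \eqref{exterior trace resolvent perturbed op} and \eqref{interior trace resolvent perturbed op}. Writing $X:=\cP_-\gamma R_{0,M}(z)$, these give
\[
X=U^-_M(z)U^+_0(z)X+U^-_M(z)\cP_+\gamma_+R^+_0(z)r_++\cP_-\gamma_-R^-_M(z)r_-.
\]
The operators $U^+_0(z)$ and $\cP_+\gamma_+R^+_0(z)r_+$ are bounded independently of $M$. So if I show
\[
\|U^-_M(z)\|=O\Big(\dfrac{\sqrt{\rho(M)}+\sqrt M}{M}\Big)\to0 \quad\text{and}\quad \|\cP_-\gamma_-R^-_M(z)\|=O\Big(\dfrac{\sqrt{\rho(M)}+\sqrt M}{M}\Big),
\]
then $I-U^-_MU^+_0$ is invertible with uniformly bounded inverse for large $M$, and the central goal follows. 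The bound on $\cP_-\gamma_-R^-_M(z)$ should reduce to the same boundary-value analysis, since $u=R^-_M(z)g_-$ satisfies $\cP_+\gamma_-u=0$ and $\|u\|\le CM^{-1}\|g\|$.

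The core step, and the one I expect to be the main obstacle, is the bound on $U^-_M(z)$, i.e.\ an estimate of $\|\cP_-\gamma_-f\|$ for $f=E^-_M(z)h$. In the proof of Lemma~\ref{lem16est}, expanding $\|(D_M-z)f\|^2=0$ gives
\[
0=\|(D_0-z)f\|^2+M^2\|f\|^2+M\big(\|\cP_-\gamma_-f\|^2-\|h\|^2\big)-2M\Re\big[z\langle\beta f,f\rangle\big].
\]
If this gave only a lower bound for $\|D_0f\|^2$, it would not produce smallness of $\cP_-\gamma_-f$. The Robin assumption \eqref{assump} is what must supply the missing information. My approach is as follows.
\begin{itemize}
\item Note that $f$ is smooth in $\Omega_-$ and solves $(-\Delta+M^2-z^2)f=0$ there, because $(D_M+z)(D_M-z)=(-\Delta+M^2-z^2)\otimes I_N$.
\item Apply \eqref{assump} componentwise, or to $|f|$ via the Kato inequality $|\nabla|f||\le|\nabla f|$, after approximating $f$ by $H^1$ functions.
\item Test this against the equation by Green's formula, so that $\|\nabla f\|^2$ is replaced by $(\Re z^2-M^2)\|f\|^2$ plus boundary terms.
\end{itemize}
The cancellation of the leading $M^2$ terms should leave
\[
M\|\gamma_-f\|^2\lesssim(\rho(M)+M)\|f\|^2+\text{boundary terms},
\]
and combining this with $\|f\|^2\le CM^{-1}\|h\|^2$ from \eqref{control, exterior solution MIT BVP} should yield
\[
\|\cP_-\gamma_-f\|^2\le C\,\dfrac{\rho(M)+M}{M^2}\,\|h\|^2.
\]
The delicate point is to justify the Green formula and relate the boundary term $\langle\partial_\nu f,f\rangle$ to the MIT data for merely $H^\half_\alpha$ functions on a Lipschitz boundary, without ever invoking curvature. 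The natural way is to express $\partial_\nu f$ on $\Sigma$ through $D_0f=-\rmi\sum_j\alpha_j\partial_jf$ and the MIT structure $\cB$. Failing that, I would first prove the estimate for $h$ in a dense set of smooth data, for which $f$ is $H^1$ up to the boundary, and then extend by continuity.
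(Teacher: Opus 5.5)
\textbf{There is a genuine gap.} Your reduction to the single estimate $\|\cP_-\gamma R_{0,M}(z)\|=O\big((\sqrt{\rho(M)}+\sqrt M)/M\big)$ is correct and is exactly the paper's. The term $\cP_-\gamma_-R^-_M(z)$ is also harmless: it equals $\gamma_-R^-_M(z)$ and is $O(M^{-\half})$ by the analogue of \eqref{amf} on $\Omega_-$.

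The problem is the claimed bound $\|U^-_M(z)\|\to0$, which is false. For $f=E^-_M(z)h$ one has $(D_0-z)f=-M\beta f$, so your own identity becomes
\[
\|\cP_-\gamma_-f\|^2=\|h\|^2-2M\|f\|^2+2\Re z\,\langle\beta f,f\rangle\ \ge\ \|h\|^2-2(M+|z|)\|f\|^2 .
\]
Thus $U^-_M(z)$ can be small only if $\|E^-_M(z)h\|^2\ge(1-o(1))\|h\|^2/(2M)$ for all $h$. This fails for data oscillating along $\Sigma$ at frequency $\lambda\gg M$. Then $f$ is a boundary layer of width $\sim\lambda^{-1}$, so $\|f\|^2\sim\|h\|^2/\lambda$ and $\|U^-_M(z)h\|\approx\|h\|$. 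Concretely, in the half-space model $U^-_M$ is a tangential Fourier multiplier whose symbol has norm tending to $1$ as $|\xi'|\to\infty$. For $z=0$ this norm equals $\sqrt{(k-M)/(k+M)}$ with $k=\sqrt{|\xi'|^2+M^2}$. By localization the same holds already for a ball, where $\rho\equiv0$.

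So $\|U^-_M(z)\|$ stays of order one. The Neumann-type inversion of $I-U^-_M(z)U^+_0(z)$ is therefore unavailable, and no argument can give $\|\cP_-\gamma_-E^-_M(z)h\|^2\lesssim\frac{\rho(M)+M}{M^2}\|h\|^2$.

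In your Green-formula sketch this failure hides in the unspecified boundary terms. Since $\|\gamma_-f\|\ge\|\cP_+\gamma_-f\|=\|h\|$, those terms must be of size $M\|h\|^2$. Once $\partial_\nu f$ is expressed through the Dirac equation, they contain a first-order tangential term coupling $h=\cP_+\gamma_-f$ with $\cP_-\gamma_-f$. That term is unbounded on $L^2(\Sigma)$, and on curved boundaries there are curvature terms on top of it. In addition, $E^-_M(z)h$ lies only in $H^\half_\alpha(\Omega_-)$ and need not be $H^1$ near corners even for smooth $h$, so \eqref{assump} cannot be applied to it directly.

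\textbf{The missing idea} is to apply \eqref{assump} to the globally $H^1$ function $f=R_{0,M}(z)g$, never to exterior boundary-value solutions. On $\RR^n$ one has $\|D_0f\|_{L^2(\RR^n)}=\|\nabla f\|_{L^2(\RR^n)}$ by Plancherel, with no boundary terms at all. Expand $\|D_Mf\|^2_{L^2(\Omega_-)}$ via \eqref{redbeta}, use $\|\cP_+\gamma f\|^2=\|\gamma f\|^2-\|\cP_-\gamma f\|^2$, and drop $\|\nabla f\|^2_{L^2(\Omega_+)}$. This gives
\[
\|A_{0,M}f\|^2\ge\|\nabla f\|^2_{L^2(\Omega_-)}-M\|\gamma f\|^2+M^2\|f\|^2_{L^2(\Omega_-)}+M\|\cP_-\gamma f\|^2\ge-\rho(M)\|f\|^2_{L^2(\Omega_-)}+M\|\cP_-\gamma f\|^2,
\]
that is, $\|\cP_-\gamma R_{0,M}(z)\|\lesssim M^{-\half}+\sqrt{\rho(M)/M}\,\|r_-R_{0,M}(z)\|$. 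Now bound $\|r_-R_{0,M}(z)\|$ by \eqref{exterior resolvent perturbed op} and Lemma~\ref{lem16est}, and use $\rho(M)/M^2\to0$ to absorb. This yields first $\|\cP_+\gamma R_{0,M}(z)\|\lesssim1$ and then the central estimate. The smallness is a property of the range of the resolvent, not of the Poincar\'e--Steklov operator $U^-_M(z)$.
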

	
	\begin{proof}
		Below we use the notation $a_M\lesssim b_M$ to say that there is $C > 0$, independent of $M$, such that $a_M \leq Cb_M$. All the subsequent inequalities are valid for $M$ large enough.
		
		As $R^+_0(z)\oplus \zero=e_+R^+_0(z)r_+$, by \eqref{resident} we have
		\begin{gather*}
			R_{0,M}(z)-R^+_0(z)\oplus \zero=J_1+J_2+J_3,\\
			J_1:= e_+E^+_0(z)\cP_- \gamma R_{0,M}(z),\quad
			J_2:=e_-E^-_M(z)\cP_+ \gamma R_{0,M}(z),\quad
			J_3:=e_-R^-_M(z)r_-,
		\end{gather*}	
		and it is sufficient to estimate the norm convergence rate to zero for each  $J_j$.
		Note that $\|J_3\|\lesssim M^{-1}$ due to~\eqref{control, exterior MIT resolvent}. Further, in view of \eqref{control, exterior solution MIT BVP} and \eqref{control, interior trace by exterior trace},
		\begin{equation}
			\label{j1j2}
			\begin{aligned}
				\|J_1\|&\lesssim \big\| \cP_- \gamma R_{0,M}(z)\big\|,\\	
				\|J_2\|&\le \big\|E^-_M(z)\big\| \cdot \big\|\cP_+ \gamma R_{0,M}(z)\big\|\\
				&\lesssim M^{-\half} \big\|\cP_+ \gamma R_{0,M}(z)\big\|\lesssim M^{-\half}\big( \big\|\cP_- \gamma R_{0,M}(z)\big\|+1\big),
			\end{aligned}
		\end{equation}
		therefore, one needs a bound for $\big\|\cP_- \gamma R_{0,M}(z)\big\|$.
		
		For each $f\in H^1(\RR^n,\CC^N)$ one has $\gamma_\pm f=\gamma f$.
		Recall that
		\begin{align*}
			\|D_{M}f&\|^2_{L^2(\Omega_-,\CC^N)}=\|D_0f+M\beta f\|^2_{L^2(\Omega_-,\CC^N)}\\
			&=\|D_0f\|^2_{L^2(\Omega_-,\CC^N)}+M^2\|f\|^2_{L^2(\Omega_-,\CC^N)}
			+2M\Re \langle D_0 f,\beta f\rangle_{L^2(\Omega_-,\CC^N)}\\
			&\stackrel{\eqref{redbeta}}{=}
			\|D_0f\|^2_{L^2(\Omega_-,\CC^N)}+M^2\|f\|^2_{L^2(\Omega_-,\CC^N)}+
			M\big\langle \rmi(\alpha\cdot\nu)\gamma f,\beta \gamma f\big\rangle_{L^2(\Sigma,\CC^N)}\\
			&\equiv \|D_0f\|^2_{L^2(\Omega_-,\CC^N)}+M^2\|f\|^2_{L^2(\Omega_-,\CC^N)}-M\langle \cB\gamma f,\gamma f\rangle_{L^2(\Sigma,\CC^N)}\\
			&\equiv \|D_0f\|^2_{L^2(\Omega_-,\CC^N)}+M^2\|f\|^2_{L^2(\Omega_-,\CC^N)}\\
			&\qquad -M\big( \|\cP_+ \gamma f\|^2_{L^2(\Sigma,\CC^N)}
			-\|\cP_- \gamma f\|^2_{L^2(\Sigma,\CC^N)}\big),
		\end{align*}
		hence,
		\begin{align}\label{Qestimate}
			\begin{split}
				\|A_{0,M}f&\|^2_{L^2(\RR^n,\CC^N)}=\|D_{0}f\|^2_{L^2(\Omega_+,\CC^N)}+\|D_{M}f\|^2_{L^2(\Omega_-,\CC^N)}\\
				&=\|D_0f\|^2_{L^2(\RR^n,\CC^N)}+M^2\|f\|^2_{L^2(\Omega_-,\CC^N)}\\
				&\qquad+M\|\cP_- \gamma f\|^2_{L^2(\Sigma,\CC^N)}-M\|\cP_+ \gamma f\|^2_{L^2(\Sigma,\CC^N)}\\
				&=\|\nabla f\|^2_{L^2(\RR^n,\CC^N)}+M^2\|f\|^2_{L^2(\Omega_-,\CC^N)}\\
				&\qquad+M\|\cP_- \gamma f\|^2_{L^2(\Sigma,\CC^N)}-M\|\cP_+ \gamma f\|^2_{L^2(\Sigma,\CC^N)}\\
				&\ge \Big( \|\nabla f\|^2_{L^2(\Omega_-,\CC^N)}-M\|\gamma f\|^2_{L^2(\Sigma,\CC^N)} +M^2\|f\|^2_{L^2(\Omega_-,\CC^N)}\Big)\\
				&\qquad +M\|\cP_- \gamma f\|^2_{L^2(\Sigma,\CC^N)}\\
				&\stackrel{\eqref{assump}}{\ge}-\rho(M)\|f\|^2_{L^2(\Omega_-,\CC^N)}+M\|\cP_- \gamma f\|^2_{L^2(\Sigma,\CC^N)},
			\end{split}
		\end{align}
		and by substituting $f:=R_{0,M}(z)g$ with any $g\in L^2(\RR^n,\CC^N)$ we arrive at
		\begin{align*}
			\big\|\cP_- \gamma R_{0,M}(z)g\big\|^2_{L^2(\Sigma,\CC^N)}&\le \dfrac{1}{M}\|g+zR_{0,M}(z)g\|^2_{L^2(\RR^n,\CC^N)}\\
			&\qquad +\dfrac{\rho(M)}{M}\big\|R_{0,M}(z)g\big\|^2_{L^2(\Omega_-,\CC^N)}\\
			&\le \dfrac{\big\|1+zR_{0,M}(z)\big\|^2}{M}\|g\|^2_{L^2(\RR^n,\CC^N)}\\
			&\qquad +\dfrac{\rho(M)}{M}\big\|R_{0,M}(z)g\big\|^2_{L^2(\Omega_-,\CC^N)}.
		\end{align*}
		According to the spectral theorem for self-adjoint operators, the uniform estimate $\big\|R_{0,M}(z)\big\|\leq |\Im z|^{-1}$ holds, and the above inequality reads as
		\begin{equation}
			\label{lokal111}
			\big\|\cP_- \gamma R_{0,M}(z)\big\|\lesssim\dfrac{1}{\sqrt{M}} +\sqrt{\dfrac{\rho(M)}{M}}\|r_-R_{0,M}(z)\|.
		\end{equation}	
		With the help of~\eqref{exterior resolvent perturbed op}, we estimate
		\begin{align*}
			\big\|r_-R_{0,M}(z)\big\|&\le \big\|E_{M}^{-}(z)\big\|\cdot \big\|\cP_+\gamma R_{0,M}(z)\big\| + \big\|R_{M}^{-}(z)\big\|\\
			\text{(use Lemma~\ref{lem16est}) }	&\lesssim \dfrac{1}{\sqrt{M}}\big\|\cP_+\gamma R_{0,M}(z)\big\| +\dfrac{1}{M},
		\end{align*}	
		and the substitution into \eqref{lokal111} yields
		\begin{equation}
			\label{lokal222}
			\begin{aligned}
				\big\|\cP_- \gamma R_{0,M}(z)\big\|&\lesssim \dfrac{1}{\sqrt{M}} + \dfrac{1}{\sqrt{M}}\sqrt{\dfrac{\rho(M)}{M^2}} +\sqrt{\dfrac{\rho(M)}{M^2}}\big\|\cP_+\gamma R_{0,M}(z)\big\|\\
				&\lesssim \dfrac{1}{\sqrt{M}} +\sqrt{\dfrac{\rho(M)}{M^2}}\big\|\cP_+\gamma R_{0,M}(z)\big\|.
			\end{aligned}
		\end{equation}	
		Hence, by \eqref{control, interior trace by exterior trace} we can estimate
		\[
		\big\| \cP_+ \gamma R_{0,M}(z)\big\|\lesssim \big\|\cP_- \gamma R_{0,M}(z)\big\|+1
		\lesssim\sqrt{\dfrac{\rho(M)}{M^2}} \big\|\cP_+\gamma R_{0,M}(z)\big\|+1,
		\]
		which results, for some $c>0$, in
		\[
		\Big( 1-c\sqrt{\dfrac{\rho(M)}{M^2}}\ \Big)\big\|\cP_+\gamma R_{0,M}(z)\big\|\lesssim 1, \text{ i.e }
		\big\|\cP_+\gamma R_{0,M}(z)\big\|\lesssim 1,
		\]
		and \eqref{lokal222} yields
		\[
		\|\cP_- \gamma R_{0,M}\|\lesssim \dfrac{1}{\sqrt{M}} +\sqrt{\dfrac{\rho(M)}{M^2}}.
		\]
		The substitution into \eqref{j1j2}
		gives
		\[
		\|J_1\|\lesssim\dfrac{1}{\sqrt{M}} +\sqrt{\dfrac{\rho(M)}{M^2}},
		\quad
		\|J_2\|\lesssim \dfrac{1}{\sqrt{M}},
		\]
		and, finally,
		\begin{align*}
			\big\|R_{0,M}(z)-R^+_0(z)\oplus \zero\big\|&\le \|J_1\|+\|J_2\|+\|J_3\|\\
			&\lesssim\dfrac{1}{\sqrt{M}} +\sqrt{\dfrac{\rho(M)}{M^2}} + \dfrac{1}{\sqrt{M}}+\dfrac{1}{M}\\
			&\lesssim\dfrac{1}{\sqrt{M}} +\sqrt{\dfrac{\rho(M)}{M^2}}. \qedhere
		\end{align*}
	\end{proof}
	
	The following assertion will allow us to cover arbitrary masses $m$ and indicate a path for further extensions.

	\begin{lemma}\label{lem18rate}
		Let $\cH$ be a Hilbert space and $\cH_0\subset\cH$ be a closed subspace. Let $T_n$ be self-adjoint operators in $\cH$
		and $T$ be a self-adjoint operator in $\cH_0$. Further let $V\in \cL(\cH)$ be self-adjoint with $V(\cH_0)\subset\cH_0$.
		If for some $z\in\CC\setminus\RR$ one has
		\begin{equation*}
			a_n:=\big\|(T_n-z)^{-1}-(T-z)^{-1}\oplus \zero\big\|\xrightarrow{n\to\infty}0,
		\end{equation*}
		with respect to the decomposition $\cH=\cH_0\oplus \cH_0^\perp$, then for some $C>0$ one has
		\[
		\big\|(T_n+V-z)^{-1}-(T+V-z)^{-1}\oplus \zero\big\|\le Ca_n\xrightarrow{n\to\infty}0.
		\]	
	\end{lemma}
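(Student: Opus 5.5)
Write $R_n(z):=(T_n-z)^{-1}$, $R(z):=(T-z)^{-1}\oplus\zero$, $R_n^V(z):=(T_n+V-z)^{-1}$ and $R^V(z):=(T+V-z)^{-1}\oplus\zero$. Since $V$ is bounded, self-adjoint and leaves $\cH_0$ invariant, it also leaves $\cH_0^\perp$ invariant. Hence $V=V_0\oplus V_1$ with $V_0:=V|_{\cH_0}$ self-adjoint and bounded, so $T+V_0$ is self-adjoint in $\cH_0$ on $\dom T$. Likewise $T_n+V$ is self-adjoint on $\dom T_n$. All resolvents at non-real $z$ therefore exist and have norm at most $|\Im z|^{-1}$.

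The plan is to compare the two sides through second resolvent identities. For the $T_n$ part one has $R^V_n=R_n-R_nVR^V_n$. For the limit part I claim $R^V=R-RVR^V$ as operators on $\cH$. On $\cH_0$ this is the usual identity for $T$ and $T+V_0$. On $\cH_0^\perp$ both $R$ and $R^V$ vanish, and $RVR^V$ vanishes there as well because $R^V$ kills $\cH_0^\perp$. Since $V$ commutes with the orthogonal projection $P$ onto $\cH_0$, we also have $RVR^V=RV_0R^V$.

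Subtracting the two identities gives
\[
R^V_n-R^V=(R_n-R)-(R_n-R)VR^V_n-RV(R^V_n-R^V).
\]
Rearranging yields $(I+RV)(R^V_n-R^V)=(R_n-R)(I-VR^V_n)$. The right-hand side has norm at most $a_n(1+\|V\|\,|\Im z|^{-1})$, uniformly in $n$.

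The only real point is to show that $I+RV$ is boundedly invertible. On $\cH_0^\perp$ it acts as the identity. On $\cH_0$ it equals $I+(T-z)^{-1}V_0=(T-z)^{-1}(T+V_0-z)$ on $\dom T$. Its inverse is therefore $(T+V_0-z)^{-1}(T-z)=I-(T+V_0-z)^{-1}V_0$, which is bounded; one checks this algebraically via the resolvent identity. So $(I+RV)^{-1}=I-R^VV$, and its norm is at most $1+\|V\|\,|\Im z|^{-1}$.

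Combining the two bounds gives
\[
\|R^V_n-R^V\|\le\big(1+\|V\|\,|\Im z|^{-1}\big)^2a_n.
\]
This is the required estimate with $C:=(1+\|V\|/|\Im z|)^2$. If one prefers, $I-R^VV$ can be verified directly as a left inverse of $I+RV$ using $R^V=R-R^VVR$; this is again a resolvent identity, checked blockwise. I expect this invertibility step, together with careful bookkeeping on $\cH_0^\perp$, to be the only place requiring care.
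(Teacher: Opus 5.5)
Your proof is correct and is essentially the paper's argument: the paper writes $(T_n+V-z)^{-1}=K_n^{-1}(T_n-z)^{-1}$ and $(T+V-z)^{-1}\oplus\zero=(K\oplus I)^{-1}\big((T-z)^{-1}\oplus\zero\big)$ with $K_n=I+(T_n-z)^{-1}V$ and $K\oplus I=I+RV$, and then estimates $K_n^{-1}-(K\oplus I)^{-1}$; expanding this with $K_n^{-1}-(K\oplus I)^{-1}=(K\oplus I)^{-1}\big((K\oplus I)-K_n\big)K_n^{-1}$ gives exactly your identity $(I+RV)(R_n^V-R^V)=(R_n-R)(I-VR_n^V)$. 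Your packaging has two small advantages: it yields the explicit constant $C=(1+\|V\|/|\Im z|)^2$, and it avoids the uniform bound on $\|K_n^{-1}\|$ that the paper uses without comment (that bound does hold, since $K_n^{-1}=I-R_n^VV$).
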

	
	\begin{proof}
		We have $(T_n-z)^{-1}-(T_n+V-z)^{-1}=(T_n-z)^{-1}V(T_n+V-z)^{-1}$,
		which can be rewritten as
		\[
		(T_n-z)^{-1}=K_n(T_n+V-z)^{-1},\quad K_n:=I+(T_n-z)^{-1}V,
		\]
		and then $K_n:\cH\to\cH$ is an isomorphism,
		with
		$(T_n+V-z)^{-1}=K_n^{-1}(T_n-z)^{-1}$.
		
		Analogously, $(T-z)^{-1}=K(T+V-z)^{-1}$, where $K:=I+(T-z)^{-1}V:\cH_0\to\cH_0$
		is an isomorphism, with $(T+V-z)^{-1}=K^{-1}(T-z)^{-1}$.
		
		Note that by assumption
		\begin{align*}
			K_n&\xrightarrow{n\to\infty} I+\big((T-z)^{-1}\oplus \zero\big)V\equiv K\oplus I,\\
			\|K_n- K\oplus I\|&=\Big\|\big((T_n-z)^{-1}-(T-z)^{-1}\oplus \zero\big)V\Big\|\\
			&\le \big\|(T_n-z)^{-1}-(T-z)^{-1}\oplus \zero\big\|\cdot\|V\big\|=\|V\| a_n=C_1a_n,
		\end{align*}
		and the limit operator $K\oplus I$ has a bounded inverse. Hence,
		\begin{align*}
			K_n^{-1}&\xrightarrow{n\to\infty}(K\oplus I)^{-1},\\
			\|K_n^{-1}-(K\oplus I)^{-1}\|&=
			\Big\| K_n^{-1}\big((K\oplus I)-K_n\big)(K\oplus I)^{-1}\|\\
			&\le \|K_n^{-1}\|\cdot \big\|(K\oplus I)^{-1}\big\|
			\cdot\big\|(K\oplus I)-K_n\big\|\le C_2 a_n,
		\end{align*}
		and then
		\begin{align*}
			(T_n+V-z)^{-1}&=K_n^{-1}(T_n-z)^{-1}\xrightarrow{n\to\infty} (K\oplus I)^{-1}\big((T-z)^{-1}\oplus \zero\big)\\
			&\qquad\equiv (T+V-z)^{-1}\oplus \zero,\\
			\big\|(T_n+V-z)^{-1}&-(T+V-z)^{-1}\oplus \zero\big\|\\
			&=\Big\| K_n^{-1}(T_n-z)^{-1}- (K\oplus I)^{-1}\big((T-z)^{-1}\oplus \zero\big)\Big\|\\
			&\le \Big\| \big(K_n^{-1}-(K\oplus I)^{-1}\big)(T_n-z)^{-1}\Big\|\\
			&\quad 	+ \Big\| (K\oplus I)^{-1} \big( (T_n-z)^{-1} -(T-z)^{-1}\oplus \zero\big)\Big\|\\
			&\le \big\|K_n^{-1}-(K\oplus I)^{-1}\big\|\cdot\big\|(T_n-z)^{-1}\big\|\\
			&\quad + \big\| (K\oplus I)^{-1}\big\|\cdot \big\|(T_n-z)^{-1} -(T-z)^{-1}\oplus \zero\big\|\\
			&\le C_3 a_n. \qedhere
		\end{align*}	
	\end{proof}
	
	\begin{theorem}\label{thm19}
		Let \eqref{assump} be satisfied. Then for any $m\in\RR$ and any $z\in\CC\setminus\RR$ one has
		\[
		\big\|R_{m,M}(z)-R^+_m(z)\oplus \zero\big\|=O\Big( \dfrac{\sqrt{\rho(M)}+\sqrt{M}}{M}\Big),\quad M\to +\infty.
		\]
	\end{theorem}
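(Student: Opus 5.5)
The natural route is to reduce the general-mass statement to the massless case, Lemma~\ref{lem17conv}, by viewing the mass inside $\Omega_+$ as a bounded self-adjoint perturbation and applying Lemma~\ref{lem18rate}. Set $\cH:=L^2(\RR^n,\CC^N)$ and $\cH_0:=L^2(\Omega_+,\CC^N)$, identified with the subspace of functions vanishing on $\Omega_-$, so that $\cH_0^\perp\simeq L^2(\Omega_-,\CC^N)$. Take $T_M:=A_{0,M}$, $T:=A^+_0$, and $V:=m\one_{\Omega_+}\beta$, meaning multiplication by $m\beta$ on $\Omega_+$ and by $0$ on $\Omega_-$.

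We then check the hypotheses of Lemma~\ref{lem18rate}.
\begin{itemize}
\item $V$ is bounded and self-adjoint because $\beta$ is Hermitian and $\one_{\Omega_+}$ is real.
\item $V(\cH_0)\subset\cH_0$ holds trivially, since $V$ preserves supports.
\item For any $M$ one has $A_{0,M}+V=\DD_0+(m\one_{\Omega_+}+M\one_{\Omega_-})\beta=A_{m,M}$ on $H^1(\RR^n,\CC^N)$.
\item On $\cH_0$ one has $A^+_0+V|_{\cH_0}=A^+_m$, because $\dom A^+_m=\dom A^+_0$ (the boundary condition $\cP_-\gamma_+f=0$ and the $H^\half_\alpha$ regularity do not involve $m$) and the actions differ exactly by $m\beta$. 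Both operators are self-adjoint by Corollary~\ref{lem10}.
\end{itemize}

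Lemma~\ref{lem17conv}, under \eqref{assump}, gives
\[
a_M:=\big\|R_{0,M}(z)-R^+_0(z)\oplus\zero\big\|=O\Big(\dfrac{\sqrt{\rho(M)}+\sqrt M}{M}\Big)\to 0,
\]
since $\rho(M)=o(M^2)$. Lemma~\ref{lem18rate}, whose proof works verbatim for a continuous parameter $M\to+\infty$ instead of a sequence, then yields $\|R_{m,M}(z)-R^+_m(z)\oplus\zero\|\le C a_M$, which is the claim.

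The only delicate point is that the constant $C$ in Lemma~\ref{lem18rate} must be uniform in $M$. This constant involves $\sup\|K_M^{-1}\|$, which is finite only for $M$ large: $a_M\to0$ gives $\|K_M-K\oplus I\|\le\|V\|a_M$, so a Neumann series shows $\|K_M^{-1}\|\le 2\|(K\oplus I)^{-1}\|$ once $\|V\|a_M\,\|(K\oplus I)^{-1}\|\le\frac12$. Since the statement is asymptotic as $M\to+\infty$, restricting to such $M$ is harmless, and the identification of the domains above is direct.
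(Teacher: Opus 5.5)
Your proposal is correct and is exactly the paper's argument: the case $m\neq 0$ is reduced to Lemma~\ref{lem17conv} through Lemma~\ref{lem18rate} with $T_M:=A_{0,M}$, $T:=A^+_0$, $V:=m\one_{\Omega_+}\beta$, using that $L^2(\Omega_+,\CC^N)\oplus 0$ is invariant under $V$. Your care about the uniformity of $\|K_M^{-1}\|$ is sound, although the bound actually holds for all $M$ and needs no Neumann series: $K_M^{-1}=I-(A_{m,M}-z)^{-1}V$, so $\|K_M^{-1}\|\le 1+|m|/|\Im z|$.
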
	
	
	\begin{proof}
		The case $m=0$ is already covered by Lemma~\ref{lem17conv}. The case $m\ne 0$
		is included with the help of Lemma~\ref{lem18rate} for $T_n:=A_{0,M}$, $T:=A^+_0$, $V:=m\one_{\Omega_+}\beta$
		by noting that $L^2(\Omega_+,\CC^N)\oplus 0$ is an invariant subspace for $V$.
	\end{proof}
	
	\begin{remark}
		Actually, Lemma~\ref{lem18rate} allows for inclusion of other perturbation types, like additional matrix potentials. The boundedness assumption on the perturbation can certainly be relaxed, see e.g. a version in~\cite{barb} for $n=2$. However, we are not pursuing this direction here.
	\end{remark}
	

	\section{Admissible domains and convergence rates}\label{sec5}
	
	Recall that for an open set $U\subset\RR^n$ with compact Lipschitz boundary and outer unit normal $\nu_U$ and $M\in\RR$, the quantity
	\[
	\Lambda(U,M):=\inf_{f\in H^1(U),\, f\not\equiv 0}\dfrac{\dint_U |\nabla f|^2\dd x-M\dint_{\partial U} |f|^2\dd S}{\dint_U |f|^2\dd x}
	\]
	is the bottom of the spectrum of the Laplacian in $U$ with the  Robin boundary condition $\partial_{\nu_U} f=Mf$ on $\partial U$, and the central convergence assumption \eqref{assump} is equivalently rewritten as
	\begin{equation}
		\label{assump2}
		\Lambda(\Omega_-,M)\ge -M^2-\rho(M),\quad \rho(M)=o(M^{2}),\quad M\to+\infty.
	\end{equation}
	
	The study of the asymptotic behavior of $\Lambda(U,M)$ for $M\to+\infty$ has been an active topic in recent decades, see the reviews in \cite{bfk,kobp}: in general, one has
	$\Lambda(U,M)\ge -cM^2$ for some $c>0$ and large $M$, however, the smallest admissible $c$ can be arbitrarily large, which happens e.g. if $U$ is a planar domain having a sufficiently acute corner \cite{kobp}. In other words, the inequality \eqref{assump2} clearly fails for general $\Omega_\pm$.
	The aim of the present section is to find geometric properties of $\Omega_\pm$ for which the validity of \eqref{assump2} can still be guaranteed.
	
	First note that the following result was shown in \cite[Thm.~1.1]{kovp}:
	
	\begin{lemma}\label{lem-kovp}
		If an open set $U\subset\RR^n$ has a compact $C^{1,1}$-smooth boundary, then $\Lambda(U,M)=-M^2+O(M)$ for large $M$.
	\end{lemma}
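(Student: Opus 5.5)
The plan is to prove the two one-sided bounds separately: an upper bound by an explicit test function concentrated near $\partial U$, and a lower bound by a divergence-theorem argument with a suitable vector field. Both rely on one geometric input. Since $\partial U$ is compact and $C^{1,1}$, it has positive reach. Hence there is $\delta>0$ such that the distance $d(x):=\dist(x,\partial U)$, restricted to $\overline U$, is $C^{1,1}$ on $\{x\in\overline U:\ d(x)<\delta\}$, with $|\nabla d|=1$ there, $\nabla d=-\nu_U$ on $\partial U$, and $\Delta d\in L^\infty$. Establishing this (the inner tubular neighbourhood theorem for $C^{1,1}$ hypersurfaces, via the projection onto $\partial U$ being Lipschitz) is the step I expect to be the main technical obstacle. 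I would cite or reprove the standard result on sets of positive reach rather than work in curvilinear coordinates, since the curvatures are merely $L^\infty$.

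\emph{Lower bound.} Fix $\chi\in C^\infty_c([0,\delta))$ with $0\le\chi\le1$ and $\chi=1$ near $0$, and set $X:=-\chi(d)\nabla d$. Then $X$ is a Lipschitz vector field on $\overline U$ with $|X|\le 1$, $\mathrm{div}\,X\in L^\infty$ and $X\cdot\nu_U=1$ on $\partial U$. For $f\in H^1(U)$, the divergence theorem (valid for Lipschitz fields and $H^1$ functions by density) gives
\[
\int_{\partial U}|f|^2\,\dd S=\int_U \mathrm{div}\,X\,|f|^2\,\dd x+2\Re\int_U \bar f\, X\cdot\nabla f\,\dd x
\le K\|f\|^2+2\|f\|\,\|\nabla f\|,
\]
where $K:=\|\mathrm{div}\,X\|_\infty$. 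Multiplying by $M>0$ and using $2M\|f\|\,\|\nabla f\|\le \|\nabla f\|^2+M^2\|f\|^2$ yields
\[
\int_U|\nabla f|^2\,\dd x-M\int_{\partial U}|f|^2\,\dd S\ge -(M^2+KM)\|f\|^2,
\]
that is, $\Lambda(U,M)\ge -M^2-KM$.

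\emph{Upper bound.} Take the test function $f:=e^{-Md}\chi(d)$ on $U$. Using $|\nabla d|=1$ and the coarea formula in the layer $\{d<\delta\}$, write the level-set areas as $|\{d=t\}|=|\partial U|+O(t)$, which follows from the Lipschitz character of the normal projection. This gives
\[
\int_U f^2\,\dd x=\frac{|\partial U|}{2M}\big(1+O(M^{-1})\big),\qquad \int_U|\nabla f|^2\,\dd x=M^2\int_U f^2\,\dd x+O(1),\qquad \int_{\partial U}f^2\,\dd S=|\partial U|,
\]
where the cutoff only contributes exponentially small errors. The Rayleigh quotient is then
\[
M^2-\frac{M|\partial U|}{\frac{|\partial U|}{2M}(1+O(M^{-1}))}+O(M)=-M^2+O(M).
\]
Combining this with the lower bound gives $\Lambda(U,M)=-M^2+O(M)$. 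If $\partial U$ has several components, nothing changes, since $d$ is handled near each component simultaneously.
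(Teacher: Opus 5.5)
Your proof is correct, but it does not follow the paper: the paper gives no proof of Lemma~\ref{lem-kovp} and imports it as a black box from \cite[Thm.~1.1]{kovp}. Your argument replaces that citation with a self-contained proof. The Gauss--Green identity with $X=-\chi(d)\nabla d$ gives the lower bound. The test function $e^{-Md}\chi(d)$, together with the coarea formula and $\mathcal{H}^{n-1}(\{d=t\})=|\partial U|(1+O(t))$, gives the upper bound. Since everything is localized in a bounded collar, unbounded $U$ is covered, and this is the case $U=\Omega_-$ the paper actually needs.

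Your route has two advantages. First, the paper only ever uses the lower bound (Theorems~\ref{thmc11} and~\ref{thmlocconvex}), and your identity directly yields
\[
\int_{\partial U}|f|^2\,\dd S\le \frac{1}{\mu}\int_U|\nabla f|^2\,\dd x+(\mu+K)\int_U|f|^2\,\dd x\quad\text{for every }\mu>0,
\]
with an explicit geometric constant $K=\|\mathrm{div}\,X\|_\infty$. This is exactly the inequality used in the proof of Theorem~\ref{thmlocconvex}. The cited asymptotic statement only gives it for large $\mu$; that is harmless there after adjusting constants, but your version is cleaner.

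Second, for this lower half you do not need the positive-reach/tubular-neighbourhood theorem that you identify as the main obstacle. Any compactly supported Lipschitz field $X$ with $|X|\le 1$ and $X=\nu_U$ on $\partial U$ works. Such a field comes from the Lipschitz normal $\nu_U$ by extending it Lipschitz-continuously to $\RR^n$, composing with the $1$-Lipschitz radial retraction onto the closed unit ball, and multiplying by a cutoff. The $C^{1,1}$ regularity of the distance function is genuinely needed only for your upper bound, which the paper never uses.
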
 
	
	 This leads to the following convergence result:
	
	\begin{theorem}\label{thmc11}
		Let $\Omega_+$ be a domain with compact $C^{1,1}$ boundary, then for any $m\in\RR$ and any $z\in\CC\setminus\RR$ one has
		\[
		\big\|R_{m,M}(z)-R^+_m(z)\oplus \zero\big\|=O\Big(\dfrac{1}{\sqrt{M}}\Big),\quad M\to +\infty.
		\]
	\end{theorem}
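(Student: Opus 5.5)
The plan is to reduce the statement to Theorem~\ref{thm19} by verifying the assumption \eqref{assump} (equivalently \eqref{assump2}) with $\rho(M)=O(M)$. Then the rate
\[
\dfrac{\sqrt{\rho(M)}+\sqrt{M}}{M}=O\Big(\dfrac{\sqrt{M}}{M}\Big)=O\Big(\dfrac{1}{\sqrt M}\Big)
\]
is exactly the one claimed.

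First observe that $\Omega_-=\RR^n\setminus\overline{\Omega_+}$ is an open set whose boundary $\partial\Omega_-=\Sigma$ is the same compact $C^{1,1}$ hypersurface. Lemma~\ref{lem-kovp} does not require boundedness of $U$, only compactness of $\partial U$, so it applies with $U:=\Omega_-$. It yields constants $c>0$ and $M_1>0$ such that
\[
\Lambda(\Omega_-,M)\ge -M^2-cM \quad\text{for all } M\ge M_1 .
\]

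It remains to produce a function $\rho$ defined on all of $[0,\infty)$, since \eqref{assump} is stated for every $M>0$. On $[0,M_1]$ we use the general fact that $\Lambda(\Omega_-,M)$ is finite and nonincreasing in $M$: the Rayleigh quotient is affine and decreasing in $M$, and finiteness follows from the standard trace inequality $\|\gamma f\|^2_{L^2(\Sigma)}\le \eps\|\nabla f\|^2+C_\eps\|f\|^2$ on Lipschitz sets with compact boundary. Hence $\Lambda(\Omega_-,M)\ge\Lambda(\Omega_-,M_1)>-\infty$ for $M\le M_1$. We then set
\[
\rho(M):=cM+K,\qquad K:=\max\{0,\,-\Lambda(\Omega_-,M_1)\},
\]
which is nonnegative, satisfies $\rho(M)=O(M)=o(M^2)$, and satisfies \eqref{assump} for all $M>0$. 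Since Theorem~\ref{thm19} only uses the bound for large $M$, an additive constant in $\rho$ does not change the rate.

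Finally, we apply Theorem~\ref{thm19} to obtain the estimate for every $m\in\RR$ and $z\in\CC\setminus\RR$, with remainder $O\big((\sqrt{cM+K}+\sqrt M)/M\big)=O(M^{-1/2})$. The only point requiring care is the applicability of Lemma~\ref{lem-kovp} to the exterior, possibly unbounded, set $\Omega_-$ whose boundary is compact. As stated, the lemma covers any open set with compact $C^{1,1}$ boundary, so this is not a genuine obstacle.
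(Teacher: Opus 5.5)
Your proposal is correct and matches the paper's proof: apply Lemma~\ref{lem-kovp} to $\Omega_-$ to get \eqref{assump2} with $\rho(M)=O(M)$, then substitute into Theorem~\ref{thm19}. Your additive constant $K$ for small $M$ just spells out what the paper leaves implicit in ``$\rho(M)=cM$ with a sufficiently large $c$''.
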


	\begin{proof}
		The preceding discussion says that \eqref{assump2} holds for $\rho(M)=cM$ with a sufficiently large $c>0$, and the substitution into~Theorem~\ref{thm19} gives the conclusion.
	\end{proof}

	In order to cover a class of non-smooth domains we start with the following observation:
	
	\begin{lemma}\label{lem22convex}
		Let $U\subset\RR^n$ be a convex domain and $U^\cc:=\RR^n\setminus \overline{U}$, then for all $M>0$ one has $\Lambda(U^\cc,M)\ge -M^2$.
	\end{lemma}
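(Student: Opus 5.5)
The plan is to exploit the distance function to the convex set $U$. Let $d(x):=\dist(x,U)$ and let $P:\RR^n\to\overline U$ be the metric projection onto $\overline U$. By convexity, $P$ is well defined and $1$-Lipschitz. In $U^\cc$ the function $d$ is $C^1$ with $\nabla d(x)=(x-P(x))/d(x)$ and $|\nabla d|=1$. Moreover, $d$ is convex, so $\Delta d\ge 0$ on $U^\cc$; since $\nabla d$ is locally Lipschitz there, $\Delta d$ is even an $L^\infty_{\rm loc}$ function.

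The core identity is a divergence-theorem (Rellich-type) argument applied to the vector field $|f|^2\nabla d$. Near $\partial U$ the field $\nabla d$ is not well behaved, especially at corners. So I will work on the shrunken sets $U_t:=\{x:\ d(x)>t\}$ for $t>0$. Each $U_t$ is the complement of the convex set $\{d\le t\}$, and $\partial U_t=\{d=t\}$ is $C^{1,1}$. The outer unit normal of $U_t$ is $-\nabla d$.

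By density, it suffices to take $f$ as the restriction to $U^\cc$ of a function in $C^\infty_c(\RR^n)$; such restrictions are dense in $H^1(U^\cc)$ because the boundary is Lipschitz and compact. Applying the divergence theorem on $U_t$ gives
\[
\int_{\{d=t\}}|f|^2\,\dd S=-\int_{U_t}\big(2\Re(\Bar f\,\nabla f\cdot\nabla d)+|f|^2\Delta d\big)\dd x\le 2\int_{U_t}|f|\,|\nabla f|\,\dd x .
\]
Here I used $\Delta d\ge0$ and $|\nabla d|=1$. By Young's inequality this gives
\[
M\int_{\{d=t\}}|f|^2\,\dd S\le \int_{U_t}|\nabla f|^2\,\dd x+M^2\int_{U_t}|f|^2\,\dd x\le \int_{U^\cc}|\nabla f|^2\,\dd x+M^2\int_{U^\cc}|f|^2\,\dd x .
\]

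It remains to pass to the limit $t\to0^+$. I only need $\liminf_{t\to0}\int_{\{d=t\}}|f|^2\,\dd S\ge\int_{\partial U}|f|^2\,\dd S$. The map $P$ sends $\{d=t\}$ onto $\partial U$: every boundary point $y$ is the projection of $y+t\nu$ for any unit outer normal $\nu$ of a supporting hyperplane at $y$. Since $P$ is $1$-Lipschitz, the area formula gives $\int_{\partial U}g\,\dd S\le\int_{\{d=t\}}g\circ P\,\dd S$ for nonnegative continuous $g$.

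Take $g=|f|^2$. On $\{d=t\}$ one has $\big||f|^2-|f|^2\circ P\big|\le C_f\,t$ because $f$ is smooth and $|x-P(x)|=t$. The surface areas of $\{d=t\}$ stay bounded as $t\to0$, by monotonicity of perimeter for nested convex sets for bounded $U$; if $U$ is unbounded, only the compact support of $f$ matters. Hence the error tends to zero and the liminf inequality follows. This yields $\int_{U^\cc}|\nabla f|^2-M\int_{\partial U}|f|^2\ge -M^2\int_{U^\cc}|f|^2$, i.e.\ $\Lambda(U^\cc,M)\ge -M^2$.

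I expect the main obstacle to be this regularity and limiting step near a non-smooth $\partial U$: justifying the divergence theorem and the sign of $\Delta d$, and comparing boundary integrals via $P$. The algebraic core is immediate. If the area-formula comparison proves awkward, the fallback is to prove the trace convergence $\int_{\{d=t\}}|f|^2\,\dd S\to\int_{\partial U}|f|^2\,\dd S$ directly, using the convergence of the perimeter measures of the nested convex sets $\{d\le t\}$ to that of $\overline U$.
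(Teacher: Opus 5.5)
Your argument is correct and takes a different route from the paper.

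\textbf{The paper's route.} It reduces to the bounded convex set $W=U\cap B$. It then approximates $W$ from outside by $C^2$ convex domains $W_p\supset W$ taken from Grisvard's approximation lemma. For these it quotes the inequality $\Lambda(W_p^\cc,M)\ge -M^2$ from the literature as a black box. Finally it proves full convergence $\int_{\partial W_p}|f|^2\dd S\to\int_{\partial W}|f|^2\dd S$ via local graph charts, using properties (a)--(f) of the approximation: uniform convergence of the graph functions, a.e. convergence of their gradients, a uniform Lipschitz bound, and dominated convergence.

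\textbf{Your route.} You prove the core inequality directly instead of citing the smooth result. The divergence identity for $|f|^2\nabla d$, combined with $\Delta d\ge 0$ (convexity of the distance function), is run on the outer parallel sets $\{d>t\}$. These are automatically $C^{1,1}$, so no smooth approximation lemma is needed. For the limit you need only the one-sided bound $\int_{\partial U}|f|^2\dd S\le \int_{\{d=t\}}|f|^2\dd S+O(t)$. The nonexpansive, surjective projection $P:\{d=t\}\to\partial U$ supplies this through $\mathcal{H}^{n-1}(B)\le \mathcal{H}^{n-1}(P^{-1}(B)\cap\{d=t\})$.

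\textbf{Trade-offs.} Your proof is self-contained and treats unbounded $U$ without intersecting with a ball. It also avoids the fine properties (a)--(f). The cost is some geometric measure theory: $C^{1,1}$ regularity of $d$ off $\overline U$, the divergence theorem for Lipschitz fields, the area formula, and monotonicity of perimeter for nested convex sets. The paper's proof is shorter because it outsources the analytic core.

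\textbf{Two small points to tidy.}
\begin{itemize}
\item For unbounded convex $U$ the boundary need not be compact. Justify density of $C^\infty_c(\RR^n)$-restrictions in $H^1(U^\cc)$ by the local segment condition rather than compactness.
\item In the unbounded case, bound $\mathcal{H}^{n-1}(\{d=t\}\cap B_R)$ explicitly. Use $\{d=t\}\cap B_R\subset\partial\big(\{d\le t\}\cap\overline{B_R}\big)$ and monotonicity of perimeter for the convex set $\{d\le t\}\cap\overline{B_R}\subset\overline{B_R}$. This bounds it by the area of the sphere of radius $R$, uniformly in $t$.
\end{itemize}
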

	
	\begin{proof}
		Note that the sought conclusion is known under the additional assumption that $U$ has $C^2$-smooth
		boundary~\cite[Thm.~2]{Pankrashkin_2016}. We are going to extend it to the general case using an approximation argument.

		First recall that the boundary integral in the definition of $\Lambda(U^\cc,M)$ makes sense at least for compactly supported functions, as convex domains have locally Lipschitz boundaries~\cite[Corol.~1.2.2.3]{Grisvard_book}. Then it follows that the restrictions of $C^\infty_c(\RR^n)$-functions on $U^\cc$ are dense in $H^1(U^\cc)$, see \cite[Thm.~1.4.2.1]{Grisvard_book}. Hence, it is sufficient to show that
		\begin{equation*}
			M\int_{\partial U}|f|^2\dd S\le \int_{U^\cc} |\nabla f|^2\dd x+M^2\int_{U^\cc}|f|^2\dd x,
			\quad f\in C^\infty_c(\RR^n),\ M>0.
		\end{equation*}

		Let $f\in C^\infty_c(\RR^n)$ and $M>0$. Let $B$ be a large open ball containing $\supp f$, then $W:=U\cap B$ is a bounded convex domain, and for $W^\cc:=\RR^n\setminus \overline{W}$ we have
		\begin{align*}
			\int_{U^\cc} |\nabla f|^2\dd x&=\int_{W^\cc} |\nabla f|^2\dd x,\\
			\int_{U^\cc} |f|^2\dd x&=\int_{W^\cc} |f|^2\dd x,\\
			\int_{\partial U}|f|^2\dd S&=\int_{\partial W}|f|^2\dd S.
		\end{align*}
		Hence, it is sufficient to show that
		\begin{equation}
			\label{inequp01}
			M\int_{\partial W}|f|^2\dd S\le \int_{W^\cc} |\nabla f|^2\dd x+M^2\int_{W^\cc}|f|^2\dd x.
		\end{equation}

		A standard result of convex analysis, see e.g. \cite[Lemma 3.2.3.2]{Grisvard_book}, says that there exist bounded convex $C^2$-smooth domains $W_p\subset\RR^n$, $p\in\NN$, such that $W\subset W_p$ for all $p$ with $d_H(\partial W_p,\partial W)\xrightarrow{p\to\infty}0$, where $d_H$ stands for the Hausdorff distance, and one can find open subsets $V_k\subset\RR^n$, $k\in\{1,\dots,K\}$,
		with the following properties\textup{:}
		\begin{itemize}
			\item[(a)] for each $k$ there exist Cartesian coordinates $y^k_1,\dots,y^k_n$ in which $V_k$ is a hypercube,
			\[
			V_k=\Big\{(y^k_1,\dots,y^k_n): -a^k_j<y^k_j<a^k_j \text{ for all } j\in\{1,\dots,n\}\Big\},\quad a^k_j>0,
			\]
			\item[(b)] for all $p\in\NN$ and each $k\in\{1,\dots,K\}$ there exist a Lipschitz function $h^k$ and a $C^2$-smooth function $h^k_p$ defined on
			\[
			V'_k=\Big\{(y^k_1,\dots,y^k_{n-1}): -a^k_j<y^k_j<a^k_j \text{ for all } j\in\{1,\dots,n-1\}\Big\}
			\]
			such that
			\begin{align*}
				& h^k(z^k)\le h^k_p(z^k),\ 			
				\big|h^k(z^k)\big|\le \frac{a^k_n}{2},\ \big|h^k_p(z^k)\big|\le \frac{a^k_n}{2} \text{ for all }z^k\in V'_k,\\
				W\cap V_k&=\big\{ y^k=(z^k,y^k_n)\in V_k:\, z^k\in V'_k,\ y^k_n< h^k(z^k)\big\}, \\
				W_p\cap V_k&=\big\{ y^k=(z^k,y^k_n)\in V_k:\, z^k\in V'_k,\ y^k_n< h^k_p(z^k)\big\},\\
				\partial W\cap V_k&=\big\{ y^k=(z^k,y^k_n)\in V_k:\, z^k\in V'_k,\ y^k_n= h^k(z^k)\big\},\\
				\partial W_p\cap V_k&=\big\{ y^k=(z^k,y^k_n)\in V_k:\, z^k\in V'_k,\ y^k_n= h^k_p(z^k)\big\},
			\end{align*}
			\item[(c)] $\partial W\subset\bigcup_{k=1}^K V_k$ and $\partial W_p\subset\bigcup_{k=1}^K V_k$ for all $p\in\NN$,
			\item[(d)] for each $k\in\{1,\dots,K\}$ one has $h^k_p\xrightarrow{p\to\infty} h^k$ uniformly in $V'_k$,
			\item[(e)] for each $k\in\{1,\dots,K\}$ one has $\nabla h^k_p(z^k)\xrightarrow{p\to\infty} \nabla h^k(z^k)$ for a.e. $z^k\in V'_k$,
			\item[(f)] there is an $L>0$ such that $	\big|\nabla h^k(z^k)\big|\le L$ and $\big|\nabla h^k_p(z^k)\big|\le L$
			for a.e. $z^k\in V'_k$, all $k\in\{1,\dots,K\}$ and all $p\in\NN$.
		\end{itemize}
		
		As noted above, for each $p$ and $W_p^\cc:=\RR^n\setminus\overline{W_p}$ one has $\Lambda(W_p^\cc,M)\ge -M^2$, in particular,
		\begin{equation}
			\label{inequp}
			M\int_{\partial W_p}|f|^2\dd S\le \int_{W_p^\cc} |\nabla f|^2\dd x+M^2\int_{W_p^\cc}|f|^2\dd x,
		\end{equation}	
		and we would like to pass to the limit $p\to\infty$ in order to obtain~\eqref{inequp01}.
		
		By $d_H(\partial W_p,\partial W)\xrightarrow{p\to\infty}0$ one has
		$W^\cc_p\subset W^\cc$ and $|W^\cc\setminus W_p^\cc|\xrightarrow{p\to \infty}0$,
		which yields
		\begin{equation}
			\label{intp}
			\int_{W_p^\cc} |\nabla f|^2\dd x+M^2\int_{W_p^\cc}|f|^2\dd x\xrightarrow{p\to\infty}\int_{W^\cc} |\nabla f|^2\dd x+M^2\int_{W^\cc}|f|^2\dd x.
		\end{equation}
		Let us show that
		\begin{equation}
			\label{upf}
			\int_{\partial W_p} |f|^2\dd S\xrightarrow{p\to\infty}\int_{\partial W} |f|^2\dd S.
		\end{equation}
		Let $\chi_1,\dots,\chi_K\in C^\infty_c(\RR^n)$ with $\supp\chi_k\subset V_k$ and $\chi_1+\dots+\chi_K=1$
		on a neighborhood of $\partial W$, then one also has $\chi_1+\dots+\chi_K=1$ on $\partial W_p$ for all sufficiently large $p$, and due to (c) it holds that
		\begin{equation*}
			\begin{array}{lll}
				\displaystyle\int_{\partial W} |f|^2\dd S=\sum_{k=1}^K J^k, & &\displaystyle\int_{\partial W_p} |f|^2\dd S=\sum_{k=1}^K J^k_p,\\[4mm]
				J^k:=\displaystyle\int_{V_k\cap \partial W} \chi_k|f|^2\dd S,& & J^k_p:=\displaystyle\int_{V_k\cap \partial W_p} \chi_k|f|^2\dd S.
			\end{array}
		\end{equation*}
		In the coordinate form we have
		\begin{align*}
			J^k&=\int_{V'_k} \chi_k\big(z^k,h^k(z^k)\big) \Big| f\big(z^k,h^k(z^k)\big)\Big|^2 \sqrt{1+|\nabla h^k(z^k)|^2} \,\dd z^k,\\
			J^k_p&=\int_{V'_k} \chi_k\big(z^k,h^k_p(z^k) \big)\Big| f(z^k,h^k_p(z^k)\big)\Big|^2 \sqrt{1+|\nabla h^k_p(z^k)|^2}\, \dd z^k.
		\end{align*}
		The subintegral expression in $J_p^k$ converges for a.e. $z^k\in V'_k$ to that in $J^k$ due to (d) and (e), and it is
		uniformly controlled by a suitably large constant due to (f). Therefore, $J_p^k\xrightarrow{p\to\infty}J^k$
		by the dominated convergence theorem, which gives \eqref{upf}. By using \eqref{intp} and \eqref{upf}
		we can pass to the limit $p\to\infty$ in \eqref{inequp} to arrive at the sought estimate \eqref{inequp01}.
	\end{proof}

	As a direct consequence we obtain:
	\begin{theorem}\label{thmconvex}
		Let $\Omega_+$ be a convex domain with compact boundary,  then for any $m\in\RR$ and any $z\in\CC\setminus\RR$ one has
		\[
		\big\|R_{m,M}(z)-R^+_m(z)\oplus \zero\big\|=O\Big(\dfrac{1}{\sqrt{M}}\Big),\quad M\to +\infty.
		\]
	\end{theorem}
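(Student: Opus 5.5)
The plan is to verify assumption \eqref{assump2} for $\Omega_-=\RR^n\setminus\overline{\Omega_+}$ with the choice $\rho\equiv 0$, and then apply Theorem~\ref{thm19}. Since $\Omega_+$ is convex, Lemma~\ref{lem22convex} with $U:=\Omega_+$ gives, for every $M>0$,
\[
\Lambda(\Omega_-,M)=\Lambda(U^\cc,M)\ge -M^2 .
\]
So \eqref{assump} holds with $\rho(M)=0$, and trivially $\rho(M)/M^2\to 0$. The standing hypotheses of Section~\ref{sec4} are also met, because a convex domain with compact boundary has a compact Lipschitz boundary; this is the same fact from \cite{Grisvard_book} already used in the proof of Lemma~\ref{lem22convex}.

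Theorem~\ref{thm19} then yields, for every $m\in\RR$ and $z\in\CC\setminus\RR$,
\[
\big\|R_{m,M}(z)-R^+_m(z)\oplus\zero\big\|=O\Big(\dfrac{\sqrt{0}+\sqrt{M}}{M}\Big)=O\Big(\dfrac{1}{\sqrt M}\Big),
\]
which is the claimed estimate. The passage from $m=0$ to arbitrary $m$ is already built into Theorem~\ref{thm19} through Lemma~\ref{lem18rate}, so nothing extra is needed there.

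The only point that needs attention is the applicability of Lemma~\ref{lem22convex}, since it is stated for an arbitrary convex domain $U$. There are two cases. If $\Omega_+$ is bounded, the lemma applies directly. If $\Omega_+$ is unbounded with compact boundary, then $\Omega_+$ is the complement of a compact set, and convexity forces $\Omega_+=\RR^n$; that is the only convex set in $\RR^n$, $n\ge 2$, whose complement is bounded. In that case $\Sigma$ is empty and the statement is void or trivial. This case is degenerate and also falls outside the standing Lipschitz-boundary setting, so it can simply be excluded. Beyond this, the proof is a direct substitution into Theorem~\ref{thm19}.
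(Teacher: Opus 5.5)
Your proposal is correct and is exactly the paper's argument: Lemma~\ref{lem22convex} gives \eqref{assump2} with $\rho\equiv 0$, and Theorem~\ref{thm19} then yields the rate $O(M^{-1/2})$. Your case split is unnecessary, since Lemma~\ref{lem22convex} is already stated (and proved by truncating with a large ball) for arbitrary convex domains; your remark that a convex set with compact boundary is either bounded or all of $\RR^n$ is nevertheless correct.
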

	
	\begin{proof}
		Due to Lemma~\ref{lem22convex}, the assumption \eqref{assump2} holds with $\rho(M)=0$, and the conclusion follows by Theorem~\ref{thm19}.	
	\end{proof}

	\begin{remark}
		The recent paper~\cite{pankrashkin2025mitbagnonsmoothconvex} showed the convergence of the individual eigenvalues of $A_{m,M}$ to those of $A_m^+$ for the case when $\Omega_+$ is a bounded convex domain having a piecewise boundary and bounded mean curvature. The preceding theorem upgrades the eigenvalue convergence to the norm-resolvent convergence and removes the piecewise smoothness assumption.
	\end{remark}

	In order to allow for a more general class of domains let us introduce a definition:
	\begin{definition}\label{deff}
		An open set $U\subset\RR^n$ will be called \emph{locally $C^1$--convexifiable near a point $z\in \partial U$},
		if one can find
		\begin{itemize}
			\item an open neighborhood $U_{z}\subset\RR^n$ of $z$, 
			\item a convex domain $W^{(z)}$ with $z\in \partial W^{(z)}$,
			\item an open neighborhood $W_{z}\subset\RR^n$ of $z$,
			\item a $C^1$-diffeomorphism $\Phi_z: W_{z}\to U_{z}$ with $\Phi_z(z)=z$ and  $\Phi_z(W_z\cap W^{(z)})=U_z\cap U$.
		\end{itemize}
		If the above properties hold for each $z\in \partial U$, then $U$ is called \emph{locally $C^1$--convexifiable}.
	\end{definition}
	
	\begin{remark}\label{rmk00}
		In Definition~\ref{deff}, one can additionally ask for $D\Phi_z(z)=I$, which entails no loss of generality. In fact, if $A:=D\Phi_z(z)$, then consider the map
		\[
		L:\ x\mapsto A(x-z)+z,
		\]
		and denote
		\[
		\Tilde W^{(z)}:=L(W^{(z)}),\quad \Tilde W_z:=L(W_z),\quad \Tilde \Phi_z:=\Phi_z\circ L^{-1}.
		\]
		Note that $\Tilde W^{(z)}$ is convex as the image of a convex set under an affine map. Further,
		\[
		\Tilde \Phi_z(\Tilde W_z\cap \Tilde W^{(z)})=\Phi_z\Big( L^{-1} \big(L(W_z\cap W^{(z)})\big)\Big)=\Phi_z(W_z\cap W^{(z)})=U_z\cap U,
		\]
		therefore,
		all assumptions are satisfied if one replaces $W^{(z)}$, $W_z$ and $\Phi_z$ by $\Tilde W^{(z)}$, $\Tilde W_z$ and $\Tilde \Phi_z$, respectively. It remains to check that
		\[
		D\Tilde \Phi_z(z)=D\Phi_z \big(L^{-1}(z)\big) DL^{-1}(z)=D\Phi_z(z) A^{-1}=A A^{-1}=I.
		\]
	\end{remark}
	
	By definition, each convex open set is locally $C^1$--convexifiable (one simply takes $W^{(z)}=U$ for all $z$). Furthermore, the image of a convex domain under diffeomorphisms of $\RR^n$ is locally $C^1$--convexifiable as well. Another example is given with the following observation:
	\begin{lemma}\label{lemC1convexifiable}
		If $U$ coincides with a $C^1$-domain near $z\in\partial U$, then $U$ is locally $C^1$--convexifiable near $z$.
	\end{lemma}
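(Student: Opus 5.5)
Near $z$ the boundary of a $C^1$-domain is a $C^1$ graph, and the plan is to flatten it by a $C^1$-diffeomorphism onto a half-space, which is convex. Concretely, after a rigid motion (an affine map, which preserves convexity and can be absorbed as in Remark~\ref{rmk00}) we may assume $z=0$. We may also assume there are $r,a>0$ and a $C^1$ function $h:\{x'\in\RR^{n-1}:|x'|<r\}\to(-a/2,a/2)$ with $h(0)=0$ such that, for the cylinder $Q:=\{(x',x_n):|x'|<r,\ |x_n|<a\}$,
\[
U\cap Q=\{(x',x_n)\in Q:\ x_n<h(x')\}.
\]
Here we use that $U$ coincides with a $C^1$-domain in a neighborhood of $z$, so shrinking $Q$ if necessary we may take this neighborhood to contain $Q$.

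Define $\Phi(y',y_n):=(y',y_n+h(y'))$ on $\RR^{n-1}_{|y'|<r}\times\RR$. It is a $C^1$ map with inverse $(x',x_n)\mapsto(x',x_n-h(x'))$, also $C^1$, and its Jacobian has determinant $1$. Hence $\Phi$ is a $C^1$-diffeomorphism of the cylinder $\{|y'|<r\}\times\RR$ onto itself, and $\Phi(0)=0$ because $h(0)=0$. Take $W^{(z)}:=\{y_n<0\}$, a convex domain with $0\in\partial W^{(z)}$; set $U_z:=Q$ and $W_z:=\Phi^{-1}(Q)$, which is an open neighborhood of $0$. Then $y\in W_z\cap W^{(z)}$ if and only if $\Phi(y)\in Q$ and $y_n<0$, i.e. $x=\Phi(y)\in Q$ with $x_n<h(x')$. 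Thus $\Phi(W_z\cap W^{(z)})=U\cap Q=U_z\cap U$, which are exactly the requirements of Definition~\ref{deff}.

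The only point needing care is the orientation and the choice of sizes. The graph description must hold on the whole cylinder $Q$, with $|h|<a/2$, so that membership in $U$ inside $Q$ is characterized exactly by $x_n<h(x')$. This is the standard local characterization of $C^1$-domains, obtained by shrinking $r$ using the continuity of $h$. If $U$ is locally above the graph, a reflection $x_n\mapsto -x_n$ (an affine map) fixes this. No real obstacle is expected.
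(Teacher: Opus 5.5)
Your proposal is correct and follows essentially the same route as the paper's proof: with $z$ at the origin and $U$ locally of the form $x_n<h(x')$, you flatten the boundary by $(x',x_n)\mapsto(x',x_n-h(x'))$ onto the convex half-space $\{y_n<0\}$, and you take $\Phi_z$ to be the inverse map $(y',y_n)\mapsto(y',y_n+h(y'))$. You also spell out what the paper leaves implicit, namely the choice $W_z=\Phi^{-1}(Q)$, the check $\Phi(W_z\cap W^{(z)})=U\cap Q$, and the reduction to $z=0$, which is just conjugation by a rigid motion and clearly preserves every requirement of Definition~\ref{deff}.
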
	
	
	\begin{proof}
		Without loss of generality assume $z=0$. Let $y_1,\dots,y_n$ be Cartesian coordinates centered
		at $0$ such that for some neighborhood $U_0$ of $0$ the set $U\cap U_0$ coincides with $U_0\cap\big\{x: x_n< h(x_1,\dots,x_{n-1})\big\}$, where $h$ is a $C^1$ function with $h(0)=0$. Then the map
		\[
		\Psi:\quad (x_1,\dots,x_n)\mapsto \big(x_1,\dots,x_{n-1},x_n-h(x_1,\dots,x_{n-1})\big)
		\]
		is easily shown to be a diffeomorphism mapping $U_0\cap U$ onto $W_0\cap W^{(0)}$, where $W_0$ is a neighborhood of $0$ and $W^{(0)}:=\RR^{n-1}\times(-\infty,0)$ is convex, and Definition~\ref{deff} is satisfied by taking $\Phi_0:=\Psi^{-1}$.
	\end{proof}
	
	As a result, any open set $U\subset\RR^n$ with $C^1$-smooth boundary is locally $C^1$--convexifiable. By combining the above statements one easily shows that planar domains whose boundary is $C^1$-piecewise smooth without concave corners are locally $C^1$--convexifiable too. Let us pass to spectral consequences.

	\begin{lemma}\label{lemmpseudo}
		Let an open set $U\subset\RR^n$ be locally $C^1$--convexifiable with compact boundary. Then
		for $U^\cc:=\RR^n\setminus\overline{U}$ one has $\Lambda(U^\cc,M)\ge -\big(1+o(1)\big)M^2$
		for $M\to +\infty$.
	\end{lemma}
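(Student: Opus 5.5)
The plan is an IMS-type localization combined with a change of variables near each boundary point, reducing to the convex case of Lemma~\ref{lem22convex}. Fix $\eps>0$; we aim to show $\Lambda(U^\cc,M)\ge -(1+\eps)^{c}M^2 - C_\eps M$ for large $M$, with $c$ an absolute constant, which gives the claim. Work with $f\in C^\infty_c(\RR^n)$ restricted to $U^\cc$; this suffices by density.

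\emph{Local charts.} For each $z\in\partial U$ we take the data of Definition~\ref{deff}, normalized so that $D\Phi_z(z)=I$ (Remark~\ref{rmk00}). By continuity of $D\Phi_z$ we shrink to a ball $B(z,r_z)\subset U_z$ on which $\|D\Phi_z^{-1}-I\|\le\eps$. Note that $\Phi_z$ maps $W_z\setminus\overline{W^{(z)}}$ onto $U_z\setminus\overline U$, since a diffeomorphism preserves complements and closures within the neighborhoods. By compactness of $\partial U$ we choose finitely many balls $B_k=B(z_k,r_{z_k}/2)$ covering $\partial U$. We add an open set $B_0$ with $\overline{B_0}\cap\partial U=\emptyset$ so that the $B_k$ cover $\RR^n$, and we pick a quadratic partition of unity $\sum_{k=0}^K\chi_k^2=1$ subordinate to this cover. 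The IMS formula gives
\[
\int_{U^\cc}|\nabla f|^2=\sum_k\int_{U^\cc}|\nabla(\chi_k f)|^2-\int_{U^\cc}\Big(\sum_k|\nabla\chi_k|^2\Big)|f|^2,
\]
and the last term is bounded by $C_\eps\|f\|^2$, which is $o(M^2)$. The boundary term splits exactly as $\sum_k\int_{\partial U}|\chi_kf|^2$. The $k=0$ piece contributes $\int|\nabla(\chi_0f)|^2\ge0$ with no boundary term, so it is bounded below by $-M^2\|\chi_0f\|^2$ trivially.

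\emph{Transplanting to the convex side.} For $k\ge1$ put $g:=(\chi_kf)\circ\Phi_k$, which is supported in $W_k$; we extend it by zero to $\RR^n\setminus\overline{W^{(k)}}$. Since $D\Phi_k$ is $\eps$-close to $I$ on the relevant set, the following comparisons hold, where $\omega$ denotes a constant tending to $1$ as $\eps\to0$:
\[
\int|\nabla(\chi_kf)|^2\ge\omega\!\int|\nabla g|^2,\qquad \int|\chi_kf|^2\le\omega\!\int|g|^2,\qquad \int_{\partial U}|\chi_kf|^2\le\omega\!\int_{\partial W^{(k)}}|g|^2.
\]
The first two follow from the change of variables with Jacobian close to $1$. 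The third uses the surface measure transformation: the surface Jacobian $|\det D\Phi|\,|(D\Phi)^{-T}\nu|$ is within a factor $\omega$ of $1$. Lemma~\ref{lem22convex}, applied with the parameter $\omega^2M$, gives
\[
\omega^2M\int_{\partial W^{(k)}}|g|^2\le\int|\nabla g|^2+\omega^4M^2\int|g|^2.
\]
Translating back, $M\int_{\partial U}|\chi_kf|^2\le\int|\nabla(\chi_kf)|^2+\omega'M^2\int|\chi_kf|^2$, where $\omega'\to1$ as $\eps\to0$. Summing over $k$ and using $\sum_k\chi_k^2=1$ yields $\Lambda(U^\cc,M)\ge-\omega'M^2-C_\eps$. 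Letting first $M\to\infty$ and then $\eps\to0$ gives the claim.

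\emph{Main obstacle.} The delicate step is the rigorous justification of the boundary integral transformation for Lipschitz boundaries under a $C^1$ diffeomorphism, together with the uniform control of the surface Jacobian. For this I would use the graph representation: the image of a Lipschitz graph under a $C^1$ map close to the identity is again locally a Lipschitz graph. The area formula for Lipschitz maps then gives the surface measure with Jacobian $J$ satisfying $|J-1|\le C\eps$ a.e. A secondary issue is that $W^{(k)}$ may be unbounded or $\partial W^{(k)}$ non-compact. This is harmless: $g$ is compactly supported, so one can intersect with a large ball exactly as in the proof of Lemma~\ref{lem22convex}.
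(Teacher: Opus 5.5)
Your proposal is correct and follows essentially the same route as the paper: IMS localization with a quadratic partition of unity, charts normalized by $D\Phi_z(z)=I$ and shrunk so that $D\Phi_z$ stays $\eps$-close to $I$, transplantation to the complement of the convex set, Lemma~\ref{lem22convex} with a rescaled parameter, and Jacobian bounds for the volume, gradient and surface terms (where you invoke Nanson's formula and the area formula, the paper compares Gram determinants of local charts, which amounts to the same). One small slip: when translating back you need $\int|g|^2\le\omega\int|\chi_k f|^2$ rather than the reverse inequality you wrote, but this follows from the same lower bound on $|\det D\Phi_k|$, exactly as in \eqref{ff4}.
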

	
	\begin{proof}
		Let us pick some $\eps\in(0,1)$. For each $z\in\partial U$ choose $U_z$, $W_z$, $W^{(z)}$, $\Phi_z$
		as in Definition~\ref{deff}. As noted in Remark~\ref{rmk00}, one can additionally assume $D\Phi_z(z)=I$, and by reducing the size of the neighborhoods we may assume additionally that $\|D\Phi_z(x)-I\|<\eps$
		for all $x\in W_z$ and all $z$. The sets $U_z$ form an open covering of the compact set $\partial U$, hence, one can find $K\in\NN$ and $z_1,\dots, z_K\in \partial U$ such that
		\[
		\partial U \subset\bigcup_{k=1}^K U_{z_k}.
		\]
		
		Let $\chi_1,\dots,\chi_K$ with $\chi_k\in C^\infty_c(U_{z_k})$, $0\le\chi_k\le 1$ and $\chi_1+\ldots+\chi_K=1$  on a neighborhood of $\partial U$.
		Denote $\chi_0:=1-(\chi_1+\dots+\chi_K)$ and set
		\[
		\psi:=\sum_{k=0}^K \chi_k^2,\quad \psi_k:=\dfrac{\chi_k}{\sqrt{\psi}},
		\]
		then $\psi_k\in C^\infty_c(U_{z_k})$ for $k\in \{1,\dots,K\}$ and $\psi_0\in C^\infty(U^\cc)$ with $\psi_0=0$ on a neighborhood of $\partial U$, with $\psi_0^2+\dots+\psi_K^2=1$ on $\overline{U^\cc}$.
		
		Let $f\in H^1(U^\cc)$ and set $h:=\sum_{k=0}^K |\nabla \psi_k|^2$. Then for $f_k:=\psi_k f$ one has
		\begin{align}
			\int_{U^\cc} |\nabla f|^2\dd x&=\int_{U^\cc} |\nabla f_0|^2\dd x + \sum_{k=1}^K\int_{U_{z_k}\cap U^\cc} |\nabla f_k|^2\dd x -\int_{U^\cc} h |f|^2\dd x, \label{ff1}\\
			\int_{U^\cc} |f|^2\dd x&= \int_{U^\cc} |f_0|^2\dd x + \sum_{k=1}^K\int_{U_{z_k}\cap U^\cc} |f_k|^2\dd x , \label{ff2}\\
			\int_{\partial U}|f|^2\dd S&=\sum_{k=1}^K\int_{U_{z_k}\cap \partial U}|f_k|^2\dd S. \label{ff3}
		\end{align}
		
		Recall that for each $k\in\{1,\dots,K\}$ we have $U_{z_k}\cap U^\cc=\Phi_{z_k}(W_{z_k}\cap (W^{(z_k)})^\cc)$. 
		Consider the functions $g_k:=f_k\circ \Phi_{z_k}\in H^1_c(W_{z_k})$.
		As all $W^{(z_k)}$ are convex, by Lemma~\ref{lem22convex} for any $\mu>0$ we have
		\begin{equation}\label{gkest}
			\int_{W_{z_k}\cap \partial W^{(z_k)}}|g_k|^2\dd S\le \dfrac{1}{\mu} 
			\int_{W_{z_k}\cap (W^{(z_k)})^\cc} |\nabla g_k|^2\dd x
			+\mu \int_{W_{z_k}\cap (W^{(z_k)})^\cc} |g_k|^2\dd x.
		\end{equation}
		Recall that for any $x\in W_{z_k}$ we have $\|D\Phi_{z_k}(x)-I\|<\eps$. Then for any $\xi\in\RR^n$ it holds that
		\begin{align*}
			\big|D\Phi_{z_k}(x)\xi\big|&\le |\xi| +\Big|\big(D\Phi_{z_k}(x)-I\big)\xi\Big|\le (1+\eps)|\xi|,\\
			\big|D\Phi_{z_k}(x)\xi\big|&\ge |\xi| -\Big|\big(D\Phi_{z_k}(x)-I\big)\xi\Big|\ge (1-\eps)|\xi|,
		\end{align*}
		i.e. one has the two-sided estimate
		\[
		(1-\eps)|\xi|\le\big|D\Phi_{z_k}(x)\xi\big|\le (1+\eps)|\xi|.
		\]
		It follows that $(1-\eps)^2I\le D\Phi_{z_k}(x)^t D\Phi_{z_k}(x)\le (1+\eps)^2 I$, hence, all eigenvalues of the matrix $D\Phi_{z_k}(x)^t D\Phi_{z_k}(x)$ are contained in $[(1-\eps)^2,(1+\eps)^2]$ due to the min-max principle.
		Therefore,
		\begin{gather*}
			\big| \det D\Phi_{z_k}(x)\big| =\sqrt{\det \big(D\Phi_{z_k}(x)^t D\Phi_{z_k}(x)\big)}\in\big[ (1-\eps)^n,(1+\eps)^n\big],\\
			\text{i.e. }(1-\eps)^n\le \big| \det D\Phi_{z_k}(x)\big|\le (1+\eps)^n.
		\end{gather*}
		It follows that
		\begin{gather}
			\label{ff4}
			\begin{aligned}
				\int_{U_{z_k}\cap U^\cc} |f_k|^2\dd x&=\int_{W_{z_k}\cap(W^{(z_k)})^\cc} |g_k|^2 |\det D \Phi_{z_k}|\dd x\\
				&\ge (1-\eps)^n \int_{W_{z_k}\cap(W^{(z_k)})^\cc} |g_k|^2 \dd x,
			\end{aligned}\\
			\begin{aligned}	
				\int_{W_{z_k}\cap(W^{(z_k)})^\cc} |\nabla g_k|^2\dd x&=\int_{W_{z_k}\cap(W^{(z_k)})^\cc} | D \Phi_{z_k}^t \cdot \big((\nabla f_k)\circ \Phi_{z_k}\big)|^2\dd x\\
				&\le (1+\eps)^2 \int_{W_{z_k}\cap(W^{(z_k)})^\cc} \big|(\nabla f_k) \circ \Phi_{z_k}\big|^2\dd x\\
				&\le \dfrac{(1+\eps)^2}{(1-\eps)^n} \int_{W_{z_k}\cap(W^{(z_k)})^\cc} \big| (\nabla f_k)\circ \Phi_{z_k}|^2 |\det D \Phi_{z_k}|\dd x\\
				&=\dfrac{(1+\eps)^2}{(1-\eps)^n}\int_{U_{z_k}\cap U^\cc} |\nabla f_k|^2\dd x.
			\end{aligned}
			\label{ff5}
		\end{gather}
		It remains to compare the boundary integrals of $f_k$ and $g_k$. Let $s\mapsto \varphi(s)$ be a local chart on $W_{z_k}\cap \partial W^{(z_k)}$, then $\Tilde\varphi:\ s\mapsto \Phi_{z_k}\circ \varphi (s)$ is a local chart on $U_{z_k}\cap \partial U$. For any $\xi\in\RR^{n-1}$ we have $\big|D \Tilde\varphi (s)\xi\big|=\big|D \Phi_{z_k} \big(\varphi(s)\big) D\varphi (s)\xi\big|$,
		therefore,
		\[
		(1-\eps ) \big|D\varphi (s)\xi\big|\le \big|D \Tilde\varphi (s)\xi\big|\le (1+\eps ) \big| D\varphi (s)\xi\big|,
		\]
		and it follows that
		\[
		(1-\eps)^2 D \varphi (s)^t D \varphi (s) \le D \Tilde\varphi (s)^t D \Tilde\varphi (s)\le(1+\eps)^2 D \varphi (s)^t D \varphi (s),
		\]
		which yields the respective two-sided estimate for the eigenvalues and results in the inequality for the determinants
		\[
		(1-\eps)^{n-1}\le \dfrac{\sqrt{\det \big(D \Tilde \varphi (s)^t D \Tilde\varphi (s)\big)}}{\sqrt{\det \big(D \varphi (s)^t D \varphi (s)\big)}}\le(1+\eps)^{n-1}.
		\]
		It follows that
		\begin{equation}
			\label{fgs}
			\int_{U_{z_k}\cap\partial U} |f_k|^2\dd S\le (1+\eps)^{n-1}\int_{W_{z_k}\cap\partial W^{(z_k)}} |g_k|^2\dd S.
		\end{equation}
		
		Now we obtain, for any $M>0$ and any $\mu>0$,
		\begin{align*}
			M\int_{\partial U}|f|^2\dd S&\stackrel{\eqref{ff3}}{=}M\sum_{k=1}^K\int_{U_{z_k}\cap \partial U}|f_k|^2\dd S\\
			&\stackrel{\eqref{fgs}}{\le} M(1+\eps)^{n-1}\sum_{k=1}^K\int_{W_{z_k}\cap\partial W^{(z_k)}}|g_k|^2\dd S\\
			&\stackrel{\eqref{gkest}}{\le}  \dfrac{M(1+\eps)^{n-1}}{\mu}\sum_{k=1}^K \int_{W_{z_k}\cap (W^{(z_k)})^\cc} |\nabla g_k|^2\dd x\\
			&\qquad  
			+\mu M(1+\eps)^{n-1}\sum_{k=1}^K \int_{W_{z_k}\cap (W^{(z_k)})^\cc} |g_k|^2\dd x\\
			&\stackrel{\eqref{ff4},\,\eqref{ff5}}{\le} \dfrac{M(1+\eps)^{n+1}}{\mu(1-\eps)^n}\sum_{k=1}^K \int_{U_{z_k}\cap U^\cc} |\nabla f_k|^2\dd x\\
			&\qquad
			+\dfrac{\mu M(1+\eps)^{n-1}}{(1-\eps)^n} \sum_{k=1}^K \int_{U_{z_k}\cap U^\cc} |f_k|^2\dd x\\
			&\stackrel{\eqref{ff1},\,\eqref{ff2}}\le \dfrac{M(1+\eps)^{n+1}}{\mu(1-\eps)^n} \Big(\int_{U^\cc} |\nabla f|^2\dd x +
			\int_{U^\cc} h |f|^2\dd x\Big)\\
			&\qquad +\dfrac{\mu M(1+\eps)^{n-1}}{(1-\eps)^n} \int_{U^\cc} |f|^2\dd x.
		\end{align*}
		By denoting $C:=\|h\|_\infty$ and taking
		\[
		\mu:=M\dfrac{(1+\eps)^{n+1}}{(1-\eps)^n}
		\]
		we arrive at
		\[
		\int_{U^\cc} |\nabla f|^2\dd x-M\int_{\partial U}|f|^2\dd S
		\ge -\Big[ \Big(\dfrac{1+\eps}{1-\eps}\Big)^{2n} M^2+C\Big]\int_{U^\cc} |f|^2\dd x.
		\]
		As $f\in H^1(U^\cc)$ was arbitrary, this results in
		\[
		\Lambda(U^\cc,M)\ge -\Big[ \Big(\dfrac{1+\eps}{1-\eps}\Big)^{2n} M^2+C\Big],
		\qquad \liminf_{M\to+\infty} \dfrac{\Lambda(U^\cc,M)}{M^2}\ge - \Big(\dfrac{1+\eps}{1-\eps}\Big)^{2n}.
		\]
		By taking arbitrarily small $\eps>0$ we conclude that 
		\[
		\liminf_{M\to+\infty} \dfrac{\Lambda(U^\cc,M)}{M^2}\ge -1,
		\]
		which is the required claim.
	\end{proof}
	
	Note that the result of Lemma~\ref{lemmpseudo} was known for the case of $C^1$ domains \cite{Pankrashkin_2020,lz}.
	
	\begin{theorem}\label{thmgen}
		Let $\Omega_+$ be locally $C^1$--convexifiable with compact boundary, then for any $m\in\RR$ and any $z\in\CC\setminus\RR$ one has
		\[
		\big\|R_{m,M}(z)-R^+_m(z)\oplus \zero\big\|\xrightarrow{M\to+\infty}0.
		\]
	\end{theorem}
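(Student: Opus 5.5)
The plan is to reduce Theorem~\ref{thmgen} directly to Theorem~\ref{thm19}. To do this we verify assumption \eqref{assump2} for $\Omega_-=\RR^n\setminus\overline{\Omega_+}$, using Lemma~\ref{lemmpseudo} with $U:=\Omega_+$.

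First, $\Omega_+$ is locally $C^1$--convexifiable with compact boundary. Hence Lemma~\ref{lemmpseudo} gives
\[
\Lambda(\Omega_-,M)\ge -\big(1+o(1)\big)M^2,\qquad M\to+\infty.
\]
Here we use that $U^\cc=\RR^n\setminus\overline{U}$ coincides with $\Omega_-$.

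Next we build $\rho$. Set
\[
\rho(M):=\max\big\{0,\,-M^2-\Lambda(\Omega_-,M)\big\}.
\]
This is finite for each $M>0$, because $\Lambda(\Omega_-,M)>-\infty$. That lower bound in turn follows from the usual trace inequality $\int_\Sigma|f|^2\dd S\le \delta\|\nabla f\|^2+C_\delta\|f\|^2$ on the Lipschitz set $\Omega_-$ with compact boundary; alternatively, it can be read off from the explicit bound obtained in the proof of Lemma~\ref{lemmpseudo}. For $M$ small or zero we simply set $\rho(M):=0$ at $M=0$, since the assumption is only needed for $M>0$.

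With this choice, \eqref{assump} holds by construction. Moreover, the asymptotic statement of Lemma~\ref{lemmpseudo} says exactly that $\rho(M)/M^2\to0$. If one prefers a cleaner $\rho$, one may instead take $\rho(M):=\eta(M)M^2$ with $\eta(M):=\sup_{M'\ge M}\max\{0,-1-\Lambda(\Omega_-,M')/M'^2\}$, which tends to zero.

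Finally, Theorem~\ref{thm19} yields
\[
\big\|R_{m,M}(z)-R^+_m(z)\oplus\zero\big\|=O\Big(\sqrt{\rho(M)/M^2}+M^{-1/2}\Big)=o(1),
\]
which is the claim.

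The main (mild) obstacle is purely bookkeeping. Lemma~\ref{lemmpseudo} is stated only as an asymptotic $\liminf$, so one must make sure that $\rho$ is defined for all $M>0$ with values in $[0,\infty)$. The finiteness of $\Lambda(\Omega_-,M)$ for every fixed $M$ handles this, and the $\eps$-dependent explicit bound in the proof of Lemma~\ref{lemmpseudo} already provides it. No convergence rate beyond $o(1)$ can be extracted, since Lemma~\ref{lemmpseudo} gives no quantitative rate.
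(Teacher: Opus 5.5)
Your proposal is correct and follows the paper's proof: define $\rho(M)=\max\{-\Lambda(\Omega_-,M)-M^2,0\}$, use Lemma~\ref{lemmpseudo} to get $\rho(M)=o(M^2)$, and apply Theorem~\ref{thm19}. Your additional check that $\Lambda(\Omega_-,M)$ is finite for each fixed $M>0$, which holds for instance by the explicit bound in the proof of Lemma~\ref{lemmpseudo}, fills in a point the paper simply calls obvious.
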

	
	\begin{proof}
	Denote
	\[
	\rho(M):=\max\big\{-\Lambda(\Omega_-, M)-M^2,0\big\},
	\]
	then obviously $\rho:[0,\infty)\to[0,\infty)$ with $\Lambda(\Omega_-)\ge -M^2-\rho(M)$.
	By Lemma~\ref{lemmpseudo}, one has $\rho(M)=o(M^2)$ for $M\to+\infty$, and the conclusion follows by Theorem~\ref{thm19}.	
	\end{proof}

	We further observe that Theorem \ref{thmgen} also applies to the case where $\Omega_+$ is a finite disjoint union of convex domains, which Theorem \ref{thmconvex} does not cover in its current form. Additionally, a careful inspection of the preceding argument shows that, by specializing the proof of Lemma \ref{lemmpseudo} to the trivial case $\Phi_z= I$, one obtains convergence with rate $O(M^{-\frac{1}{2}})$. Building on this observation, we now introduce another class of domains:
	
	\begin{definition}\label{deflocconv}
		An open set $U\subset\RR^n$ will be called \emph{locally convex-congruent} if for every $z\in \partial U$ there
		are an open neighborhood $U_z$ of $z$ in $\RR^n$ and an open set $W^{(z)}\subset \RR^n$
		that is either convex or has compact $C^{1,1}$ boundary, such that  $U\cap U_z = W^{(z)}\cap U_z$.
	\end{definition}
	
	This corresponds in the convex case exactly to Definition \ref{deff} with $\Phi_z= I$ and $U_z=W_z$ everywhere, which implies in combination with Lemma \ref{lemC1convexifiable} that every locally convex-congruent domain $\Omega_+$ is, in particular, locally $C^1$-convexifiable. Consequently, Theorem \ref{thmgen} already applies but only with the convergence rate $o(1)$. The next theorem shows that this rate can be improved to $O(M^{-\frac{1}{2}})$.
	
	\begin{theorem}\label{thmlocconvex}Let $\Omega_+\subset\RR^n$ be locally convex-congruent with compact boundary. Then there is a constant $C_0\geq 0$, independent of $M$, such that
		\[ 
		\Lambda(\Omega_-,M)\ge -M^2 -C_0M\quad \text{for all }M>1.
		\]
		Consequently, for every $m\in\RR$ and $z\in\CC\setminus\RR$ one has 
		\[
		\big\|R_{m,M}(z)-R^+_m(z)\oplus \zero\big\|=O\Big(\dfrac{1}{\sqrt{M}}\Big) \text{ as } M\to +\infty.
		\]
	\end{theorem}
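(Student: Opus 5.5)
We follow the localization argument of Lemma~\ref{lemmpseudo}, now with $\Phi_z=I$, so that no Jacobian distortion factors $(1\pm\eps)$ appear and the only error is the IMS localization term, of order $O(1)$ in the quadratic form.

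\emph{Covering and partition of unity.} For each $z\in\partial\Omega_+$ we take $U_z$ and $W^{(z)}$ as in Definition~\ref{deflocconv}. By compactness of $\partial\Omega_+$ we pick $z_1,\dots,z_K$ with $\partial\Omega_+\subset\bigcup_k U_{z_k}$. We then build $\psi_0,\dots,\psi_K$ exactly as in the proof of Lemma~\ref{lemmpseudo}: $\psi_k\in C^\infty_c(U_{z_k})$, $\psi_0=0$ near $\partial\Omega_+$, and $\sum\psi_k^2=1$ on $\overline{\Omega_-}$. For $f\in H^1(\Omega_-)$ and $f_k:=\psi_k f$, the identities \eqref{ff1}--\eqref{ff3} hold with $h:=\sum|\nabla\psi_k|^2$ and $C:=\|h\|_\infty$.

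\emph{Local estimate.} Since $\Omega_+\cap U_{z_k}=W^{(z_k)}\cap U_{z_k}$, we have $\Omega_-\cap U_{z_k}=(\RR^n\setminus\overline{W^{(z_k)}})\cap U_{z_k}$ and $\partial\Omega_+\cap U_{z_k}=\partial W^{(z_k)}\cap U_{z_k}$. Because $f_k$ is supported in $U_{z_k}$, extending it by zero (using a cutoff slightly larger than $\supp\psi_k$) gives a function in $H^1(\RR^n\setminus\overline{W^{(z_k)}})$. We then distinguish two cases.
\begin{itemize}
\item If $W^{(z_k)}$ is convex, Lemma~\ref{lem22convex} gives
\[
\int|\nabla f_k|^2-M\int|f_k|^2\,\dd S\ge -M^2\int|f_k|^2.
\]
\item If $W^{(z_k)}$ has compact $C^{1,1}$ boundary, then so does its exterior $\RR^n\setminus\overline{W^{(z_k)}}$. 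Lemma~\ref{lem-kovp} gives $\Lambda\ge -M^2-c_kM$ for $M$ large, and since $\Lambda(\cdot,M)$ is nonincreasing and finite we can arrange $c_k$ so that this holds for all $M>1$.
\end{itemize}
In both cases, with $c:=\max_k c_k$,
\[
\int_{U_{z_k}\cap\Omega_-}|\nabla f_k|^2-M\int_{U_{z_k}\cap\partial\Omega_+}|f_k|^2\ge -(M^2+cM)\int|f_k|^2.
\]

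\emph{Summation and the rate.} We sum over $k=1,\dots,K$ and add the nonnegative term $\int|\nabla f_0|^2\ge 0$. We also bound $\int|f_0|^2$ trivially, using $-(M^2+cM)\int|f_0|^2\le 0$ so that it can be inserted on the right-hand side. Then \eqref{ff1}--\eqref{ff3} yield
\[
\int_{\Omega_-}|\nabla f|^2-M\int_{\partial\Omega_+}|f|^2\ge -(M^2+cM+C)\int_{\Omega_-}|f|^2 .
\]
For $M>1$ we have $cM+C\le (c+C)M$, which gives the first claim with $C_0:=c+C$. Taking $\rho(M)=C_0M$ (and $\rho$ bounded on $[0,1]$, where the inequality \eqref{assump} holds for some constant because $\Lambda(\Omega_-,M)\ge\Lambda(\Omega_-,1)$) in Theorem~\ref{thm19} gives the rate $(\sqrt{C_0M}+\sqrt M)/M=O(M^{-1/2})$.

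The main obstacle is the technical justification of the local step. We must check that $f_k$, viewed on the exterior of $W^{(z_k)}$, is an admissible $H^1$ function there, with matching boundary traces and volume integrals. This holds because on $\supp\psi_k$ the two sets and their boundaries coincide, but the identification should be written out carefully.
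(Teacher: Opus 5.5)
Your proposal is correct and is essentially the paper's own proof. You localize with the partition of unity from Lemma~\ref{lemmpseudo} taking $\Phi_{z_k}=I$, apply Lemma~\ref{lem22convex} or Lemma~\ref{lem-kovp} to the exterior of each $W^{(z_k)}$, absorb the localization error $\|h\|_\infty$ into $C_0M$ for $M>1$, and then invoke Theorem~\ref{thm19} with $\rho(M)=O(M)$. Two points are merely more careful versions of steps the paper leaves implicit: you write the local bound directly as a quadratic-form inequality rather than through the auxiliary parameter $\mu=M$, and you use monotonicity of $M\mapsto\Lambda(\cdot,M)$ to cover the bounded ranges $M\in(1,M_0]$ and $M\in(0,1]$.
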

	\begin{proof}Keeping the same notations as in the proof of Lemma \ref{lemmpseudo} with $\Phi_{z_k}= I$, we may take  $U_{z_k}=W_{z_k}$ and $\eps=0$ throughout. Hence, $g_k:=f_k\circ \Phi_{z_k}= f_k$, the Jacobian factors in the inequalities \eqref{ff4}--\eqref{ff5} are exactly $1$, and $U_{z_k}\cap \partial W^{({z_k})}=U_{z_k}\cap \partial\Omega_+$. Consequently, applying Lemma~\ref{lem-kovp} or Lemma~\ref{lem22convex} to each $W^{({z_k})}$ (without coordinate change), the bounds \eqref{gkest} and \eqref{fgs} become for some fixed $c_k \geq 0$ and for every $\mu>0$, 
		\begin{equation*}
			\int_{U_{z_k}\cap \partial\Omega_+}|f_k|^2\dd S\le \dfrac{1}{\mu} 
			\int_{U_{z_k}\cap \Omega_-} |\nabla f_k|^2\dd x
			+(\mu+c_k) \int_{U_{z_k}\cap \Omega_-} |f_k|^2\dd x.
		\end{equation*}
		Summing over $k$ as in the proof of Lemma \ref{lemmpseudo}, using the decomposition \eqref{ff1}--\eqref{ff3} there and choosing $\mu=M$, gives
		\begin{multline*}
			M\int_{ \partial\Omega_-}|f|^2\dd S\le  
			\int_{ \Omega_-} |\nabla f|^2\dd x + \int_{\Omega_-} \bigl(h+M\max \{c_1,\dots,c_K \}\bigr) |f|^2\dd x\\
			+M^2 \int_{\Omega_-} |f|^2\dd x,\qquad h:=\sum_{k=0}^{K} |\nabla \psi_k|^2.
		\end{multline*}
		Taking $C_0= \|h\|_\infty+\max \{c_1,\dots,c_K \} $ (which depends only on the fixed partition of unity) yields that  $\Lambda(\Omega_-,M)\ge -M^2 -MC_0$  for every $M>1$. Therefore, the assumption \eqref{assump2} is satisfied with $\rho(M)=O(M)$, and we conclude the proof by using Theorem~\ref{thm19}.
	\end{proof}

	\section{Generalizations and possible extensions}\label{sec6}
	 The observations of the recent work~\cite{duran} allow us to extend the above results to other boundary conditions.
	
	First, for $m,\mu,\eps\in\RR$ consider the following operator $\DD_{m,\mu,\eps}$ in $L^2(\RR^n,\CC^N)$:
	\[
	\begin{aligned}
		\DD_{m,\mu,\eps} f&:=D_m f \text{ in }\RR^n\setminus\Sigma,\\
		\dom \DD_{m,\mu,\eps}&:=\Big\{
		f\in H^\half_\alpha(\RR^n\setminus\Sigma,\CC^N):\\
		&\quad \qquad \rmi (\alpha\cdot\nu)(\gamma_+f-\gamma_-f)+(\eps I + \mu\beta)\dfrac{\gamma_+f + \gamma_-f}{2} =0
		\Big\}.	
	\end{aligned}
	\]
	In particular, $\DD_{m,\mu}=\DD_{m,\mu,0}$. Analogously to $\DD_{m,\mu}$, one often uses the formal writing
	$\DD_{m,\mu,\eps}=\DD_m+(\mu\beta+\eps I)\delta_\Sigma$, and $\mu$ and $\eps$ are interpreted as the strengths of Lorentz
	and electrostatic $\delta$-interactions supported on $\Sigma$, see e.g. \cite{BoundaryTripleWeylfunctions}.
	
	Due to \cite[Corol.~3.2]{duran}, if $\eps^2-\mu^2=\eps_0^2-\mu^2_0\ne 0$, then the self-adjointness of $D_{m,\mu_0,\eps_0}$ is equivalent to the self-adjointness of $D_{m,\mu,\eps}$. As a direct consequence, we obtain:
	\begin{theorem}
		For any $m,\eps,\mu\in\RR$ with $\eps^2-\mu^2<0$ the operator $\DD_{m,\mu,\eps}$ is self-adjoint.
	\end{theorem}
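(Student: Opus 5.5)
The plan is to reduce the statement to Theorem~\ref{thm-sa} via the congruence result quoted from \cite[Corol.~3.2]{duran}. The invariant there is $\eps^2-\mu^2$: whenever it is nonzero, self-adjointness is the same for all pairs $(\mu,\eps)$ sharing that value. The reduction therefore takes two steps.

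First, fix $m,\eps,\mu$ with $\eps^2-\mu^2<0$ and set
\[
\mu_0:=\sqrt{\mu^2-\eps^2}>0,\qquad \eps_0:=0 .
\]
Then $\eps_0^2-\mu_0^2=-(\mu^2-\eps^2)=\eps^2-\mu^2\ne 0$, so the hypothesis of \cite[Corol.~3.2]{duran} holds for the two pairs $(\mu,\eps)$ and $(\mu_0,\eps_0)$.

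Second, observe that $\DD_{m,\mu_0,0}=\DD_{m,\mu_0}$ by definition, and that this operator is self-adjoint by Theorem~\ref{thm-sa}, which covers every real $\mu$. The quoted equivalence then transfers self-adjointness to $\DD_{m,\mu,\eps}$.

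The only point that needs care is checking that the operators in \cite{duran} match ours. Specifically, I would verify three things:
\begin{itemize}
\item the operator domain is the $H^\half_\alpha(\RR^n\setminus\Sigma,\CC^N)$-based domain with exactly the transmission condition written above;
\item the result is valid for compact Lipschitz $\Sigma$ in arbitrary dimension;
\item the sign and normalisation conventions for $\mu$ and $\eps$ agree with ours, so that the invariant really is $\eps^2-\mu^2$.
\end{itemize}

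The congruence transform in \cite{duran} acts by a bounded, boundedly invertible matrix multiplication built from $\beta$ and the identity, which is piecewise constant on $\Omega_\pm$. Such a map preserves $H^\half_\alpha$-regularity on each side and commutes appropriately with the traces $\gamma_\pm$. So once the conventions are aligned, the domains correspond, and nothing beyond this bookkeeping is expected to be an obstacle.
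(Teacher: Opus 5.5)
Your proposal is correct and is exactly the paper's argument: choose $\mu_0$ with $\mu_0^2=\mu^2-\eps^2$, note that $\DD_{m,\mu_0,0}=\DD_{m,\mu_0}$ is self-adjoint by Theorem~\ref{thm-sa}, and transfer self-adjointness to $\DD_{m,\mu,\eps}$ via the equivalence of \cite[Corol.~3.2]{duran}. The convention checks you list are bookkeeping that the paper takes for granted when quoting that corollary. Since Theorem~\ref{thm-sa} holds for every real $\mu_0$, you could also take $\mu_0$ with the same sign as $\mu$ if the cited equivalence turned out to be stated branchwise.
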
	
	
	\begin{proof}
		Let $\mu_0\in \RR$ with $-\mu_0^2=\eps^2-\mu^2$. Then $\DD_{m,\mu_0,0}\equiv \DD_{m,\mu_0}$ is self-adjoint by Theorem~\ref{thm-sa}, and the preceding equivalence yields the self-adjointness of $\DD_{m,\mu,\eps}$.
	\end{proof}
	
	Further, let $\Omega\subset\RR^n$ be an open set with compact Lipschitz boundary, and for
	\[
	\mu,\eps\in\RR,\qquad \mu^2-\eps^2=4,
	\]
	consider the following operator $A^\Omega_{m,\mu,\eps}$ in $L^2(\Omega,\CC^N)$:
	\begin{gather*}
		A^\Omega_{m,\mu,\eps}:\ f\mapsto D_m f,\\
		\dom A^\Omega_{m,\mu,\eps}:=\big\{f\in H^\half_\alpha(\Omega,\CC^N):\ f= -\dfrac{\rmi}{2} (\eps I +\mu \beta) (\alpha\cdot\nu) f \text{ on }\partial\Omega\big\},
	\end{gather*}
	which are known as Dirac operators with generalized MIT bag boundary conditions, see e.g. \cite{amas}. Note that the MIT bag operator $A^\Omega_m$
	corresponds to $\mu=2$ and $\eps=0$. As noticed in \cite[Lem.~4.1]{duran}, the self-adjointness of $A^\Omega_m$
	is equivalent to the self-adjointness of all $A^\Omega_{m,\mu,\eps}$, and by applying Lemma~\ref{lem10} we arrive at:
	\begin{theorem}\label{thm62}
		Let $\Omega\subset\RR^n$ be an open set with compact Lipschitz boundary and $m\in\RR$, then all generalized MIT bag operators $A^\Omega_{m,\mu,\eps}$ with $\mu^2-\eps^2=4$, $\mu,\eps\in\RR$,  are self-adjoint.
	\end{theorem}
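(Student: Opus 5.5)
The plan is to reduce the statement to Corollary~\ref{lem10} using the equivalence from \cite[Lem.~4.1]{duran}. That lemma is a congruence (unitary) transform relating the MIT bag operator $A^\Omega_m$ to the generalized operators $A^\Omega_{m,\mu,\eps}$ with $\mu^2-\eps^2=4$.

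First I parametrize the hyperbola $\mu^2-\eps^2=4$. The branch with $\mu>0$ is written as $\mu=2\cosh t$, $\eps=2\sinh t$, $t\in\RR$. The branch $\mu<0$ is handled by the extra sign flip $\beta\mapsto-\beta$. Equivalently, one can replace $\Omega$ by its complement, which swaps the sign of $\nu$.

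Next I recall the idea of the congruence transform. One looks for a unitary matrix $\cU_t$, built from $\beta$ and the Clifford algebra, for example $e^{t\gamma_5\beta/2}$-type expressions in even dimensions. It should commute with $D_m$ up to a change of the mass term. A cleaner formulation, and the one in \cite{duran}, works directly at the level of the boundary triple / transmission condition: the boundary condition
\[
f=-\tfrac{\rmi}{2}(\eps I+\mu\beta)(\alpha\cdot\nu)f
\]
is mapped to $f=\cB f$. This is done by an operator that preserves the domain regularity $H^\half_\alpha$ and conjugates $A^\Omega_{m,\mu,\eps}$ to a bounded self-adjoint perturbation of $A^\Omega_m$. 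I would check that the transform leaves the space $H^\half_\alpha(\Omega,\CC^N)$ invariant and commutes with the traces $\gamma_\pm$. This is immediate if it acts as pointwise multiplication by a constant matrix.

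With the equivalence in hand, the conclusion is short. Corollary~\ref{lem10} gives the self-adjointness of $A^\Omega_m=A^\Omega_{m,2,0}$ for every open set with compact Lipschitz boundary and every $m$. The equivalence then yields self-adjointness of all $A^\Omega_{m,\mu,\eps}$ on the stated $H^\half_\alpha$-domains. Here it matters that bounded self-adjoint perturbations and unitary conjugations preserve self-adjointness.

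The main obstacle is verifying that \cite[Lem.~4.1]{duran} applies verbatim in the present Lipschitz, $H^\half_\alpha$ setting. That lemma may have been formulated for smooth domains or $H^1$-domains. I would handle this by noting that its proof only uses the algebra of boundary matrices and pointwise multiplication on the boundary trace space $L^2(\Sigma,\CC^N)$. It does not use smoothness of $\Sigma$. The trace maps $\gamma_\pm$ and the integration by parts formula~\eqref{partint} are all that is needed, and both are available here.
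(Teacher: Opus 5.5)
Your reduction — the equivalence from \cite[Lem.~4.1]{duran} followed by Corollary~\ref{lem10} — is exactly the paper's proof. As a citation-level argument it is fine, and handling $\mu<0$ by $\beta\mapsto-\beta$ (together with $m\mapsto-m$) is correct. However, the mechanism you attribute to that lemma is wrong, and this is precisely the part you single out as the main obstacle. The transform is not unitary, and a unitary conjugation cannot remove $\eps$. Conjugation by a unitary fixes $\eps I$, so it maps the condition to $f=-\tfrac{\rmi}{2}(\eps I+\mu\beta')(\alpha'\cdot\nu)f$ with the same $\eps$; it only relabels the Clifford system. Your candidate $e^{t\gamma_5\beta/2}$ is a chiral rotation mixing $\beta$ with $\gamma_5$, which leads to chiral bag conditions, not to the family $\eps I+\mu\beta$. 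A non-unitary similarity $X^{-1}(\cdot)X$ does not preserve self-adjointness either. So ``conjugating $A^\Omega_{m,\mu,\eps}$ to a bounded self-adjoint perturbation of $A^\Omega_m$'' cannot be the argument as you state it.

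What works is a congruence. For $\mu=2\cosh t$, $\eps=2\sinh t$ one has $\half(\eps I+\mu\beta)=\beta e^{t\beta}$. The Hermitian positive definite (non-unitary) matrix $X:=e^{t\beta/2}$ satisfies $\alpha_jX=X^{-1}\alpha_j$, whence
\[
-\tfrac{\rmi}{2}(\eps I+\mu\beta)(\alpha\cdot\nu)=-\rmi\beta e^{t\beta}(\alpha\cdot\nu)=X\cB X^{-1},\qquad XD_0X=D_0 .
\]
Thus $\dom A^\Omega_{m,\mu,\eps}=X\dom A^\Omega_m$ and $XA^\Omega_{m,\mu,\eps}X=A^\Omega_0+m\beta e^{t\beta}$, which is a bounded self-adjoint perturbation of $A^\Omega_m$. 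The abstract fact needed is $(Y^*TY)^*=Y^*T^*Y$ for $Y\in\cL(\cH)$ with bounded inverse. In other words, congruence, not unitary equivalence, is what preserves self-adjointness here. Since $D_0(Xg)=X^{-1}D_0g$, multiplication by $X$ maps $H^\half_\alpha(\Omega,\CC^N)$ onto itself and commutes with $\gamma_+$. With this corrected transform, your observation that nothing depends on the smoothness of $\partial\Omega$ is right.
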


	Finally, an application of \cite[Lem.~4.4]{duran} allows us to transfer the infinite mass interpretation of $A^\Omega_m$ to a similar version for $A^\Omega_{m,\mu,\eps}$:
	\begin{theorem}\label{thm63}
		Let $\Omega\subset\RR^n$ be a locally $C^1$-convexifiable open set with compact boundary. Then for any $m\in\RR$, any generalized MIT bag operator $A^\Omega_{m,\mu,\eps}$ and any $z\in\CC\setminus\RR$ one has
		\[
		(B_{m,M,\mu,\eps}-z)^{-1}\xrightarrow{M\to+\infty}(A^\Omega_{m,\mu,\eps}-z)^{-1}\oplus \zero
		\]
		in the norm, where
		\[
		B_{m,M,\mu,\eps}:=\DD_0 +m\one_{\Omega}\beta +\half M\one_{\Omega^\cc}(\mu\beta-\eps I).
		\]
	\end{theorem}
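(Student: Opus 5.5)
The plan is to transport the already established result for the standard MIT bag model, Theorem~\ref{thmgen}, to the generalized boundary conditions by means of the congruence transform of~\cite{duran}. We will not redo the Poincar\'e--Steklov analysis of Section~\ref{sec4}.

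The first step is to fix $\mu,\eps$ with $\mu^2-\eps^2=4$ and write $\mu=2\cosh t$, $\eps=2\sinh t$ for the appropriate sign choice (if $\mu<0$, also compose with the sign flip $\beta\mapsto-\beta$). The key algebraic input is \cite[Lem.~4.4]{duran}. I expect it to provide a unitary, or at least bounded and boundedly invertible, matrix $V_t=\exp(\frac t2\,\beta\gamma)$ built from Clifford elements anticommuting suitably with the $\alpha_j$ (e.g.\ $\gamma=\rmi\beta\alpha_1\cdots\alpha_n$-type chirality, or via doubling $N$). This matrix intertwines $D_0$ with itself up to the congruence and maps the mass term $\beta$ to $\cosh t\,\beta-\sinh t\, I$ in the sense of quadratic forms/transmission conditions. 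Concretely, we expect the identities
\[
V_t^{*}\big(\DD_0+m\one_\Omega\beta+M\one_{\Omega^\cc}\beta\big)V_t=B_{m',M,\mu,\eps}
\]
in the exterior, and $V_t^{*}A^\Omega_m V_t=A^\Omega_{m,\mu,\eps}$ for the boundary condition.

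The delicate point is the mass $m$ inside $\Omega$. Here I would use the paper's Lemma~\ref{lem18rate}: first treat $m=0$, where the transform only has to handle $D_0$ and the exterior mass, and then add the bounded self-adjoint perturbation $V=m\one_\Omega\beta$. Since this $V$ leaves $L^2(\Omega,\CC^N)\oplus0$ invariant, Lemma~\ref{lem18rate} transfers norm-resolvent convergence from $m=0$ to arbitrary $m$.

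The second step is to express the resolvents: if the transform is a similarity $S$ (not necessarily unitary), then $(B-z)^{-1}=S(A_{0,M}-z)^{-1}S^{-1}$, and correspondingly for the limit operator. The statement of~\cite[Lem.~4.4]{duran} is precisely such a resolvent correspondence, and we will use it as given. The decisive requirement is that $S$ is independent of $M$ and commutes with the decomposition $L^2(\Omega)\oplus L^2(\Omega^\cc)$, being a pointwise constant matrix multiplier. Given this, we get
\[
\|(B-z)^{-1}-(A^\Omega_{0,\mu,\eps}-z)^{-1}\oplus\zero\|\le\|S\|\|S^{-1}\|\,\|R_{0,M}(z)-R^+_0(z)\oplus\zero\|,
\]
which tends to zero by Theorem~\ref{thmgen}.

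The main obstacle is checking that the congruence transform of~\cite{duran} really produces $\half M(\mu\beta-\eps I)$ in the exterior with the same $M$. I expect a rescaling $M\mapsto cM$ may be needed, which is harmless for a limit. I also need to verify that it is compatible with $z$: non-unitary similarities may shift the spectral parameter, e.g.\ mapping $z$ to $z\cosh t$. This would also be harmless, since the convergence in Theorem~\ref{thmgen} holds for every nonreal $z$. If the transform turns out only to be a unitary equivalence of the $\delta$-interaction operators $\DD_{m,\mu,\eps}$, the fallback is to run the argument at that level and then restrict to $\Omega$, as in Corollary~\ref{lem10}.
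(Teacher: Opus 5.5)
Your overall route matches the paper's: Theorem~\ref{thm63} is obtained there by pushing Theorem~\ref{thmgen} through the congruence transform via \cite[Lem.~4.4]{duran}. Your reduction to $m=0$ followed by Lemma~\ref{lem18rate} with $V=m\one_\Omega\beta$ (using Theorem~\ref{thm62} for self-adjointness of the limit) is fine, and so is the flip $\beta\mapsto-\beta$ for $\mu<0$. The gap is the transfer step itself. You assume $(B_{0,M,\mu,\eps}-z)^{-1}=S(A_{0,M}-z)^{-1}S^{-1}$ for a constant matrix $S$, unitary or merely invertible, possibly after rescaling $M$ or moving $z$, and from this you derive the bound with $\|S\|\,\|S^{-1}\|$. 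For $\eps\neq 0$ no such $S$ exists. It would have to conjugate $M\beta$, with eigenvalues $\pm M$, into $\frac M2(\mu\beta-\eps I)=M(\cosh t\,\beta-\sinh t\,I)$, whose eigenvalues are $Me^{-t}$ and $-Me^{t}$. No rescaling of $M$ matches both, and a similarity cannot move $z$ at all.

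What \cite{duran} provides is a genuine congruence, not a similarity. Take $S:=e^{-t\beta/2}$, which is Hermitian, invertible and commutes with $\beta$. Then $S\alpha_jS=\alpha_j$ and $S\beta S=\cosh t\,\beta-\sinh t\,I$. Hence $SA_{0,M}S=B_{0,M,\mu,\eps}$, and $SA^+_0S=A^\Omega_{0,\mu,\eps}$ on $S^{-1}\dom A^+_0$ (multiply $Sf=\cB Sf$ by $S$ from the left). However, the resolvents are related by
\[
(SAS-z)^{-1}=S^{-1}(A-zP)^{-1}S^{-1},\qquad P:=S^{-2}=e^{t\beta}.
\]
So you need norm convergence of $(A_{0,M}-zP)^{-1}$ at a matrix-valued spectral parameter, and Theorem~\ref{thmgen} does not give this by itself. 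This is exactly the step the paper delegates to \cite[Lem.~4.4]{duran}.

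You can supply it by rerunning the proof of Lemma~\ref{lem18rate} with the bounded, non-self-adjoint $V:=z(I-P)$:
\begin{itemize}
\item One has $A-zP=(A-z)\big(I+z(A-z)^{-1}(I-P)\big)$.
\item $P$ respects the decomposition $L^2(\Omega)\oplus L^2(\Omega^\cc)$.
\item In Lemma~\ref{lem18rate}, bijectivity of $K_M$ and of $K\oplus I$ came from self-adjointness of $T_n+V$. Here it follows instead from $A-zP=S^{-1}(SAS-z)S^{-1}$, together with the fact that congruence by a bounded, boundedly invertible Hermitian $S$ preserves self-adjointness.
\end{itemize}
This settles the case $m=0$, with the rate preserved up to a constant factor. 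Your inner-mass step then completes the proof.
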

	Improved convergence rates as in Theorem~\ref{thmlocconvex} are directly transferred as well.

    \begin{remark}
        The proof of the resolvent convergence in Lemma \ref{lem17conv} strongly depends on the assumption \eqref{assump}.
        When taking a closer look at the usage of this assumption in estimate (\ref{Qestimate}), one may notice that everything still works under the slightly weaker assumption that there exists a function $\Tilde \rho: [0,\infty) \to [0,\infty)$ with
        \[
        \lim_{M \to + \infty} \frac{\Tilde \rho(M)}{M^2} =0
        \]
        such that for any $f \in H^1(\Omega_-) $ and $M>0$ it holds that
        \[
        \int_{\Omega_-} | \nabla f|^2 \dd x - M \int_{\Sigma} |\cP_+ \gamma f|^2 \dd S\geq - \big(M^2 + \Tilde \rho(M)\big) \int_{\Omega_-} |f|^2\dd x.
        \]
        While this may be considered as a small technical improvement, an easy analysis shows that it still does not allow
        for the inclusion of general Lipschitz domains. For example, the above estimate fails
        if $\Omega_- \subseteq \RR^2$ is a polygonal domain with sufficiently acute angles. In fact, we were unable to use this weaker assumption in order to extend the class of domains for which the infinite mass limit can be established.        
    \end{remark}

\begin{remark}
    Let us also discuss what happens if assumption (\ref{assump}) does not hold. While the preceding argument does not prove resolvent convergence in this case, one may establish a stability result showing that Theorem \ref{thm19} does not break down abruptly as $\Lambda(\Omega_-,M)$ falls below $-M^2 -o(M^2)$. Recall that (Lemma \ref{lem16est}) without the hypothesis \eqref{assump} there are $C_U, C_R^+>0$ (depending only on $\Omega_+$, $m$, and $z$) such that 
	\begin{align}\label{EQQ1}
		\big\| \cP_+ \gamma R_{m,M}(z)\big\|&\le C_U\big\|\cP_- \gamma R_{m,M}(z)\big\|+C_R^+ \quad \text{for all $M>0$},
	\end{align}
	and there are $M_0>0$ and constants $C_E,C_R>0$ (independent of $\Omega_+$ and $m$) with
	\begin{align*}
		\big\|R_M^-(z)\big\| \le C_R M^{-1},\quad \big\|E^-_M(z)\big\|\le C_E M^{-\half} \quad \text{for all $M>M_0$}.
	\end{align*}
	For $M>0$ we set 
	\begin{align*}
		C_M:=\max \left\{ 1, -\frac{\Lambda(\Omega_-,M)}{M^2}\right\}, \quad C_\infty= \limsup_{M\to \infty} C_M.
	\end{align*}
	Note that $C_\infty \in[1,\infty)$ since for any Lipschitz domain $U\subset\RR^n$ there is $1\leq c_{U}\leq \infty$ such that $-c_U M^2\leq \Lambda(U,M)\leq -M^2$ for $M>0$ large, see \cite[Thm.~2.1]{GS} for the upper bound and \cite[Lemma 2.7]{kobp} for the lower bound.
	\end{remark}
	
	\begin{proposition}\label{thmstability}Fix $m\in\RR$ and $z\in\CC\setminus\RR$, and set 
		\[
		\kappa:=\frac{1}{4C_E^2 C_U^2}.
		\]
		If $ C_\infty<1+\kappa$ then  
		\[
		\limsup_{M\to\infty}\big\|R_{m,M}(z)-R^+_m(z)\oplus \zero\big\|\leq \Upsilon_m(C_\infty),
		\]
		where $\Upsilon_m:[1,1+\kappa)\to[0,\infty)$ is continuous and increasing with $\Upsilon_m(1)=0$. In particular, the resolvent convergence is recovered continuously as $C_\infty \to 1^+$. 
	\end{proposition}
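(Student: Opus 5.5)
We redo the proof of Lemma~\ref{lem17conv} with the quantitative constants kept, without assuming \eqref{assump}. We first handle $m=0$ and then transfer to general $m$ via Lemma~\ref{lem18rate}. For $m\neq0$ we cannot apply the lemma as stated, since $a_n$ need not tend to zero. Its proof, however, gives an explicit continuous bound $\|(T_n+V-z)^{-1}-(T+V-z)^{-1}\oplus\zero\|\le \Theta(a_n)$ with $\Theta(0)=0$ and $\Theta$ increasing. This holds as long as $\|V\|a_n<\|(K\oplus I)^{-1}\|^{-1}$, by a Neumann series for $K_n^{-1}$. If $\|V\|a_n$ is too large we instead use the trivial bound $2|\Im z|^{-1}$; the minimum of the two can be made continuous and increasing. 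Composing this with the $m=0$ function will give $\Upsilon_m$. An alternative that avoids this step is to run the whole argument directly with $m$, since Lemma~\ref{lem16est} already holds for general $m$.

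For $m=0$, fix $\delta>0$. For $M$ large we have $-\Lambda(\Omega_-,M)\le (C_\infty+\delta)M^2$, so \eqref{assump} holds along large $M$ with $\rho(M)=(C_\infty+\delta-1)M^2$. Write $s:=\sqrt{C_\infty+\delta-1}$. The computation \eqref{Qestimate} then gives
\[
\|\cP_-\gamma R_{0,M}(z)\|\le \frac{c_z}{\sqrt M}+ s\sqrt{M}\,\|r_-R_{0,M}(z)\|,
\]
where $c_z=\|1+zR_{0,M}(z)\|\le 1+|z|/|\Im z|$. By \eqref{exterior resolvent perturbed op},
\[
\|r_-R_{0,M}(z)\|\le C_E M^{-1/2}\|\cP_+\gamma R_{0,M}(z)\|+C_RM^{-1},
\]
so that
\[
\|\cP_-\gamma R_{0,M}\|\le \frac{c_z+sC_R}{\sqrt M}+sC_E\|\cP_+\gamma R_{0,M}\|.
\]
Combining this with \eqref{EQQ1} yields
\[
(1-sC_EC_U)\,\|\cP_+\gamma R_{0,M}\|\le C_R^+ + O(M^{-1/2}).
\]

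The key step, and the place where $\kappa$ enters, is absorbing the $\|\cP_+\gamma R_{0,M}\|$ term. This needs $sC_EC_U<1$, i.e. $C_\infty-1+\delta<(C_EC_U)^{-2}=4\kappa$. So the hypothesis $C_\infty<1+\kappa$ leaves room to spare: it even guarantees $sC_EC_U\le 1/2+o(1)$ for small $\delta$, which bounds the constant $(1-sC_EC_U)^{-1}$ by $2$. This is presumably why the factor $1/4$ appears. Under this condition we obtain
\[
\limsup\|\cP_+\gamma R_{0,M}\|\le \frac{C_R^+}{1-sC_EC_U}
\quad\text{and}\quad
\limsup\|\cP_-\gamma R_{0,M}\|\le \frac{sC_EC_R^+}{1-sC_EC_U}.
\]

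Finally we insert these bounds into the decomposition $J_1+J_2+J_3$ from Lemma~\ref{lem17conv}. The terms $J_2$ and $J_3$ vanish as $M\to\infty$ by Lemma~\ref{lem16est}, while $\|J_1\|\le \|E_0^+(z)\|\,\|\cP_-\gamma R_{0,M}\|$. Letting $\delta\to0$ gives
\[
\limsup\le \Upsilon_0(C_\infty):=\frac{\|E_0^+(z)\|\,C_EC_R^+\sqrt{C_\infty-1}}{1-C_EC_U\sqrt{C_\infty-1}}.
\]
This function is continuous and increasing on $[1,1+\kappa)$, its denominator is bounded below by $1/2$ there, and $\Upsilon_0(1)=0$. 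Setting $\Upsilon_m:=\Theta\circ\Upsilon_0$ (or using the direct general-$m$ version) completes the argument, and the continuous recovery of resolvent convergence as $C_\infty\to1^+$ follows from $\Upsilon_m(1)=0$ and continuity.
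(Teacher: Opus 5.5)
Your proposal is correct and is essentially the paper's proof: a quantitative rerun of Lemma~\ref{lem17conv} via \eqref{Qestimate}, \eqref{exterior resolvent perturbed op} and \eqref{EQQ1}, absorption of $\|\cP_+\gamma R_{0,M}(z)\|$, the splitting $J_1+J_2+J_3$, and Lemma~\ref{lem18rate}-type reasoning for $m\neq 0$. The one difference is that the paper works with squared norms, and its two uses of $(a+b)^2\le 2a^2+2b^2$ are where the factor $4$ in $\kappa$ comes from (not from bounding $(1-sC_EC_U)^{-1}$ by $2$), so your unsquared bookkeeping gives the weaker hypothesis $C_\infty<1+4\kappa$ and a smaller $\Upsilon_0$; your alternative of running the argument directly with $m$ (bounding $\|A_{0,M}R_{m,M}(z)g\|\le(1+(|z|+|m|)/|\Im z|)\|g\|$ in \eqref{Qestimate}) is the cleaner option, since the threshold then involves the $C_U$ of the given $m$, as in the statement, rather than that of $m=0$.
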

	\begin{proof} We only give the proof for $m=0$, the case $m\neq 0$ follows straightforwardly adapting the arguments used in the proof Lemma \ref{lem18rate}.
		
		Fix $g\in L^2(\RR^n,\CC^N)$ and set $f:=R_{0,M}(z)g\in H^1(\RR^n,\CC^N)$. By \eqref{Qestimate} we have 
		\begin{align*}
			\|A_{0,M}f\|^2_{L^2(\RR^n,\CC^N)}&\geq (\Lambda(\Omega_-,M) +M^2)\|f\|^2_{L^2(\Omega_-,\CC^N)}+M\|\cP_- \gamma f\|^2_{L^2(\Sigma,\CC^N)}\\
			&\geq -(C_M-1) M^2\|f\|^2_{L^2(\Omega_-,\CC^N)}+M\|\cP_- \gamma f\|^2_{L^2(\Sigma,\CC^N)},
		\end{align*}
		thus    
		\begin{align}\label{Star}
			M\|\cP_- \gamma f\|^2_{L^2(\Sigma,\CC^N)} \leq \|A_{0,M}f\|^2_{L^2(\RR^n,\CC^N)} +(C_M-1) M^2\|f\|^2_{L^2(\Omega_-,\CC^N)}.
		\end{align}
		Set	$X\equiv X(M):= \|\cP_- \gamma R_{0,M}(z)\|$  and $Y\equiv Y(M):=\|\cP_+ \gamma R_{0,M}(z)\|$, then by \eqref{exterior resolvent perturbed op} we have  
		\begin{align*}
			\|f\|_{L^2(\Omega_-,\CC^N)}&=	\big\|r_-R_{0,M}(z)g\big\|_{L^2(\Omega_-,\CC^N)}\\
			&\leq \big\|E_{M}^{-}(z)\big\|\cdot \big\|\cP_+\gamma R_{0,M}(z)g\big\|_{L^2(\Sigma,\CC^N)} + \big\|R_{M}^{-}(z)g\big\|\\
			&\le \big( C_EM^{-\frac{1}{2}}Y + C_RM^{-1}\big) 	\|g\|_{L^2(\RR^n,\CC^N)},
		\end{align*}
		and this implies
		\begin{align*}
			\|f\|^2_{L^2(\Omega_-,\CC^N)}\le 2\big( C_E^2 M^{-1}Y^2 + C_R^2 M^{-2}\big) 	\|g\|^2_{L^2(\RR^n,\CC^N)}.
		\end{align*}
		Substituting into \eqref{Star}, using $\|A_{0,M}f\|= \|g+ zf\|\leq \tau \|g\|$ with $\tau:=1+ \dfrac{|z|}{|\Im(z)|}$, and taking the supremum over $\|g\|\leq 1$ gives 
		\begin{align*}
			X^2 \leq \frac{\tau^2}{M} +2C^2_E(C_M-1)Y^2 + \frac{2C_R^2(C_M-1)}{M},
		\end{align*}
		and inserting the inequality  $Y^2\leq 2 C_U^2 X^2 +2(C^+_R)^2$, yields
		\begin{align*}
			X^2(1-4C^2_E C_U^2(C_M-1) ) \leq   \frac{\tau^2+ 2C_R^2(C_M-1)}{M}  +4C^2_E(C^+_R)^2(C_M-1).
		\end{align*}
		If $C_\infty - 1<\kappa\equiv (4C^2_E C_U^2)^{-1}$, choose $\eta>0$ with $C_\infty+\eta - 1<\kappa$; for $M$ large, $C_M< C_\infty +\eta$, so the bracket on the left is bounded below by a fixed positive number, and letting $M\to\infty$ then $\eta\to 0$, we get
		\begin{align*}
			\limsup_{M\to\infty}X(M)^2\leq l^2_\infty:=  \frac{4C^2_E(C^+_R)^2(C_\infty-1)}{1-(C_\infty-1)/\kappa}.
		\end{align*}
		By \eqref{EQQ1} again, we have $\limsup_{M\to\infty} Y(M)\leq C_U l_\infty + C_R^+=: L_\infty<\infty$. Hence,  as in the proof of Lemma \ref{lem17conv}, we have the resolvent decomposition $R_{0,M}(z)-R^+_0(z)\oplus \zero=J_1+J_2+J_3$, with 
		\begin{align*}
			\| J_1\|&:= \big\|e_+E^+_0(z)\cP_- \gamma R_{0,M}(z)\big\| \leq \big\| E_0^+(z)\big\|X(M)\\
			&\limsup_{M\to+\infty}\| J_1\|\le \big\| E_0^+(z)\big\| l_\infty,\\
			\|J_2\|&:=\|e_-E^-_M(z)\cP_+ \gamma R_{0,M}(z)\|\leq \frac{C_E Y(M)}{\sqrt{M}},\\
			&\limsup_{M\to+\infty}\| J_2\|\leq \limsup_{M\to\infty}\frac{C_E L_\infty}{\sqrt{M}}= 0,\\
			\|J_3\|&:=\|e_-R^-_M(z)r_-\|\leq \frac{C_R}{M}\longrightarrow 0.
		\end{align*}
		Set $\Upsilon_0(C_\infty):=\big\| E_0^+(z)\big\| l_\infty$. By definition of $l_\infty$, it is clear that $\Upsilon_0(C_\infty)$ is continuous and increasing with $\Upsilon_0(C_\infty)\xrightarrow{C_\infty \to 1^+}0$. Therefore,
		\[
		\limsup_{M\to\infty}\big\|R_{m,M}(z)-R^+_m(z)\oplus \zero\big\|\leq \Upsilon_0(C_\infty),
		\]
		as claimed.
	\end{proof}
	
	\begin{remark}
		The proof is expected to follow the same lines in the case of domains with non-compact boundaries, as soon as suitable mapping properties of the trace maps and layer operators are established. However, stating precise assumptions guaranteeing the validity of all necessary ingredients in the case of 	boundaries that
		are simultaneously non-smooth and non-compact is likely to be very demanding, so we prefer to discuss these aspects separately somewhere else.
	\end{remark}

	\begin{remark}
		Note that the operator $A^\Omega_{m,M}$ is of the form $A+ M B$ with fixed self-adjoint $A$ and $B$, so it
		formally belongs to the framework of large coupling convergence. Although there are many works dealing with the
		case of semi-bounded $A$ and $B$, see e.g. the review in \cite{benamor}, the theory for non-sign-definite $A$ and/or $B$ is surprisingly incomplete as recently discussed in \cite{koke}. While some initial results
		of general $A$ and $B$ were obtained in \cite{koke}, none of the initial assumptions applies to our situation, and
 it remains unclear how an analog of the MIT bag boundary condition may arise in the abstract setting. This is an interesting direction for future work.
	\end{remark}

	\section*{Acknowledgments}
	N.~K\"orner and K.~Pankrashkin were partially supported by the Deutsche For\-schungs\-gemeinschaft (German Research Foundation), project 491606144.
	
	This article is based upon work from COST Action 24122 mSPACE, supported by COST (European Cooperation in Science and Technology), \url{http://cost.eu}.
	
	A large part of the work was prepared during a visit of D.~Machado to the Carl von Ossietzky Universit\"at Oldenburg in May--June 2026, and he would like to thank the Institut f\"ur Mathematik for the warm hospitality.

\end{document}